\newcommand{\ex}{\mathrm{ex}}
\newcommand{\Ex}{\mathrm{Ex}}
\newcommand{\co}{\mathrm{Co}}
\newcommand{\ro}{\mathrm{Ro}}
\newtheorem{theorem}{Theorem}
\newtheorem{lemma}{Lemma}
\newtheorem{fact}{Fact}
\def\le{\leqslant}
\def\ge{\geqslant}
\begin{document}
%\pagewiselinenumbers

\title{One more Tur\'an number and  Ramsey number for the loose 3-uniform path of length three}

\author{Joanna Polcyn
\\A. Mickiewicz University\\Pozna\'n, Poland\\{\tt joaska@amu.edu.pl}}

\date{\today}

\maketitle

\begin{abstract}
Let $P$ denote  a 3-uniform hypergraph consisting of 7 vertices $a,b,c,d,e,f,g$ and 3 edges
$\{a,b,c\}, \{c,d,e\},$ and $\{e,f,g\}$. It is known that the $r$-color Ramsey number for $P$ is
 $R(P;r)=r+6$  for $r\le 9$. The proof of this result relies on a careful analysis
of the Tur\'an numbers for $P$. In this paper, we refine this analysis  further and compute the fifth order Tur\'an number for $P$, for all $n$.
Using this number for $n=16$, we  confirm the formula
$R(P;10)=16$.
\end{abstract}

%--------------------------------------------------------------------------------------------------------------------------------------------------
\section{Introduction}\label{intro}
For the sake of brevity, 3-uniform hypergraphs will be called here \emph{ $3$-graphs}.
 Given a family of $3$-graphs $\mathcal F$, we say that a $3$-graph $H$ is  \emph{$\mathcal F$-free}
if for all $F \in \mathcal F$ we have  $H \nsupseteq F$.

 For a family of $3$-graphs $\mathcal F$ and an integer $n\ge1$, the \textit{ Tur\'an number of the 1st order}, that is, the ordinary Tur\'an number, is defined as
    $$
    \ex(n;\mathcal F)=\mathrm{ex}^{(1)}(n; \mathcal F)=\max\{|E(H)|:|V(H)|=n\;\mbox{ and $H$ is
    $\mathcal F$-free}\}.
    $$
 Every $n$-vertex $\mathcal F$-free $3$-graph with $\ex^{(1)}(n;\mathcal F)$
 edges is called  \emph{1-extremal for~$\mathcal F$}.
  We denote by $\mathrm{Ex}^{(1)}(n;\mathcal F)$
  the family of all, pairwise non-isomorphic, $n$-vertex $3$-graphs which are 1-extremal for $\mathcal F$.
Further,  for an  integer $s\ge1$,
 \textit{the Tur\'an number of the $(s+1)$-st order} is defined as
\begin{eqnarray*}
\mathrm{ex}^{(s+1)}(n;\mathcal F)=\max\{|E(H)|:|V(H)|=n,\; \mbox{$H$ is
    $\mathcal F$-free, and }\\
 \forall H'\in \mathrm{Ex}^{(1)}(n;\mathcal F)\cup...\cup\mathrm{Ex}^{(s)}(n;\mathcal F),  H\nsubseteq H'\},
\end{eqnarray*}
if such a $3$-graph $H$ exists. Note that if $\mathrm{ex}^{(s+1)}(n;\mathcal F)$ exists then, by definition,
\begin{equation}\label{decrease}
\mathrm{ex}^{(s+1)}(n;\mathcal F)<\mathrm{ex}^{(s)}(n;\mathcal F).
\end{equation}

An $n$-vertex $\mathcal F$-free $3$-graph $H$ is called \textit{(s+1)-extremal for} $\mathcal F$ if $|E(H)| = \mathrm{ex}^{(s+1)}(n;\mathcal F)$ and  $\forall H'\in \mathrm{Ex}^{(1)}(n;\mathcal F)\cup...\cup\mathrm{Ex}^{(s)}(n;\mathcal F),  H\nsubseteq H'$; we denote by $\mathrm{Ex}^{(s+1)}(n;\mathcal F)$ the family of $n$-vertex $3$-graphs which are $(s+1)$-extremal for $\mathcal F$.
 In the case when $\mathcal F=\{F\}$, we will write $F$ instead of  $\{ F\}$.

\emph{A loose 3-uniform path of length 3} is a 3-graph $P$ consisting of 7 vertices, say,
$a,b,c,d,e,f,g$, and 3 edges $\{a,b,c\}, \{c,d,e\},$ and $\{e,f,g\}$.
The \textit{Ramsey number} $R(P;r)$ is the least integer $n$ such that  every $r$-coloring
of the edges of the complete $3$-graph $K_n$ results in a monochromatic copy of $P$. Gyarfas and Raeisi \cite{GR} proved, among many other results, that $R(P;2)=8$. (This result was later extended to loose paths of arbitrary lengths, but still $r=2$, in \cite{Omidi}.)
Then Jackowska \cite{J} showed that $R(P;3)=9$  and $r+6\le R(P;r)$ for all $r\ge3$.
In turn, in \cite{JPR}, \cite{JPRr}, and \cite{PR}, the Tur\'an numbers of the first four orders,  $\ex^{(i)}(n;P)$, $i=1,2,3,4$, have been determined for all feasible values of $n$. Using these numbers, in \cite{JPRr} and \cite{PR}, we were able to compute the Ramsey numbers $R(P;r)$ for $4\le r\le9$.

 \begin{theorem}[\cite{GR, J,JPRr,PR}]\label{past}
    For all $r\le 9$, $R(P; r)=r+6$.
\end{theorem}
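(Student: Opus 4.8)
The plan is to establish the two inequalities $r+6\le R(P;r)$ and $R(P;r)\le r+6$ separately. The lower bound is an explicit colouring that works for every $r$; the upper bound is an induction on $r$ whose inductive step is a counting argument driven by the known values of the lower-order Tur\'an numbers $\ex^{(i)}(n;P)$, and it is there that the restriction $r\le 9$ enters.

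For the lower bound, first note that the full star $S_v$ on any vertex set --- all triples through a fixed vertex $v$ --- is $P$-free: in any copy of $P$ the first and last edges are disjoint, so they cannot both contain $v$, and therefore every subgraph of a star is $P$-free. Now take $V=\{x_1,\dots,x_{r-2}\}\cup W$ with $|W|=7$, so that $|V|=r+5$, and for $1\le i\le r-2$ colour with colour $i$ all triples that meet $\{x_1,\dots,x_{r-2}\}$ and whose lowest-indexed $x$-vertex is $x_i$; this colour class is a subgraph of $S_{x_i}$, hence $P$-free. The remaining triples lie inside $W$ and form a copy of $K_7^{(3)}$, which by $R(P;2)=8$ can be $2$-coloured with no monochromatic $P$; colour it with two fresh colours. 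This yields an $r$-colouring of $K_{r+5}^{(3)}$ with no monochromatic $P$, so $R(P;r)\ge r+6$. (For $r=2$ the construction degenerates to the bare statement $R(P;2)=8$.)

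For the upper bound I would induct on $r$, the cases $r\le 3$ being $R(P;2)=8$ and $R(P;3)=9$. Let $r\ge 4$ and $n=r+6$, and suppose for contradiction that $K_n^{(3)}$ is partitioned into $P$-free colour classes $H_1,\dots,H_r$, so that $\sum_i|E(H_i)|=\binom n3$. If some class is a spanning star $S_v$, then the other $r-1$ classes partition the triples avoiding $v$, that is, a copy of $K_{n-1}^{(3)}=K_{(r-1)+6}^{(3)}$, and the induction hypothesis $R(P;r-1)\le(r-1)+6$ forces a monochromatic $P$, a contradiction. So no colour class is a spanning star, and now edge-disjointness forces the classes down the Tur\'an hierarchy: since for $n\ge8$ the $1$-extremal $P$-free $3$-graphs are exactly the spanning stars, no class is $1$-extremal; and, more generally, a $P$-free class not contained in any of the finitely many, explicitly classified $j$-extremal configurations with $j\le i$ has at most $\ex^{(i+1)}(n;P)$ edges by the very definition of $\ex^{(i+1)}$, while only a bounded number of classes can be fitted inside those configurations for each small $j$ (again because two near-extremal configurations overlap heavily). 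Combining these restrictions with the strict decrease $\ex^{(1)}(n;P)>\ex^{(2)}(n;P)>\cdots$ and the exact values of $\ex^{(i)}(n;P)$ for $i\le 4$ from \cite{JPR,JPRr,PR}, one shows that $H_1,\dots,H_r$ cannot cover all $\binom n3$ triples, the desired contradiction. The ceiling $r\le 9$ is exactly where the available Tur\'an data runs out: when $r=10$ one has $n=16$ and needs one more level of the hierarchy, $\ex^{(5)}(16;P)$, which is what this paper supplies.

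The real obstacle is not the Ramsey bookkeeping above but its inputs. What legitimises the ``forced down the hierarchy'' step is a stability statement --- every near-extremal $P$-free $3$-graph is contained in one of a short, explicit list of configurations --- together with the exact determination of $\ex^{(i)}(n;P)$ and of the family $\Ex^{(i)}(n;P)$ for each relevant $i$. Carrying this classification through all five levels at $n=16$, with the case analysis it entails, is the bulk of the work and the reason the case $r=10$ lay beyond the earlier results.
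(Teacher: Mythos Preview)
The paper does not prove Theorem~\ref{past}; it is stated with citations to \cite{GR,J,JPRr,PR} and no argument is given here. The only Ramsey proof in the paper is that of Theorem~\ref{main} ($r=10$), which follows the template of those earlier papers and is the right thing to compare your sketch against.

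Your lower-bound construction is correct and is the standard one. Your upper-bound framework is the right idea but contains a real gap. The clean step --- ``if some class is contained in a full star $S_v$, delete $v$ and apply induction on $K_{n-1}$ with $r-1$ colours'' --- is fine (note you need ``contained in'' rather than ``equal to'' $S_v$, or you cannot invoke $\ex^{(2)}$). The problem is what happens next. For $r\le 5$ the bound $r\cdot\ex^{(2)}(r+6;P)<\binom{r+6}{3}$ already gives the contradiction, but from $r=6$ on it does not (for $n=12$ one has $6\cdot 40=240>220$). You then appeal to ``only a bounded number of classes can be fitted inside those configurations because two near-extremal configurations overlap heavily'', but this is not an argument, and it is not how the proof actually goes. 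If a class sits inside a comet $\co(n)$ (or later a rocket, or $K_4\cup S_{n-4}$), one cannot simply delete a vertex and land in $K_{n-1}$: removing the centre leaves a few stray edges of that colour. What the paper (and its predecessors) actually does, visible in Section~\ref{proofRam}, is to delete the centre \emph{and} those stray edges, and then carry forward an explicit lower bound on the number of edges in the shrinking host $3$-graph; at each stage one re-applies the Tur\'an hierarchy to find the next colour to peel off. The induction is therefore on a sequence of near-complete $3$-graphs with carefully tracked edge counts, not on complete $3$-graphs, and there are additional parity/structure arguments (as in the $n=16$ and $n=13$ steps of Section~\ref{proofRam}) to rule out the possibility that every colour is simultaneously extremal of the non-star type. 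Your sketch omits this bookkeeping entirely, and without it the argument does not close for $6\le r\le 9$.
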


In this paper we determine, for all $n\ge 7$, the  Tur\'an numbers for $P$ of the fifth order, $\ex^{(5)}(n;P)$. This allows us to compute one more Ramsey number.

 \begin{theorem}\label{main}
     $R(P; 10)=16$.
\end{theorem}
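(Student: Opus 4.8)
The lower bound $R(P;10)\ge 16$ is already contained in Jackowska's inequality $r+6\le R(P;r)$ from \cite{J}, so the whole content of Theorem~\ref{main} is the upper bound $R(P;10)\le 16$, and I would prove it by contradiction. Suppose the edges of $K_{16}^{(3)}$ are coloured with $10$ colours and no colour class contains a copy of $P$. Let $H_1,\dots,H_{10}$ be the colour classes: they are pairwise edge-disjoint, each is $P$-free, and $\sum_{i=1}^{10}|E(H_i)|=\binom{16}{3}=560$. Relabel so that $|E(H_1)|\ge|E(H_2)|\ge\cdots\ge|E(H_{10})|$.

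The engine of the proof is the defining property of the higher-order Tur\'an numbers: if $H$ is a $P$-free $3$-graph on $16$ vertices with $|E(H)|>\ex^{(s+1)}(16;P)$, then $H$ must be a subgraph of some member of $\Ex^{(1)}(16;P)\cup\cdots\cup\Ex^{(s)}(16;P)$. Feeding in the already known values $\ex^{(1)}(16;P)>\ex^{(2)}(16;P)>\ex^{(3)}(16;P)>\ex^{(4)}(16;P)$ from \cite{JPR,JPRr,PR}, together with the fifth-order number $\ex^{(5)}(16;P)$ computed in this paper, and using $\sum_i|E(H_i)|=560$, one first controls how many colour classes can exceed $\ex^{(5)}(16;P)$ edges: the other classes contribute at most $\ex^{(5)}(16;P)$ each, so a counting/averaging argument forces all but a bounded number of the $H_i$ to be subgraphs of the explicitly classified extremal $3$-graphs of orders $1,\dots,4$ (and, when none of them is, the pure averaging bound $560\le 10\,\ex^{(5)}(16;P)$ already fails).

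The heart of the matter is then a case analysis over which extremal configuration each of these few heavy colour classes embeds into. The easy case is when some $H_i$ is a subgraph of a first-order extremal $3$-graph: since $\Ex^{(1)}(16;P)$ is the star of all triples through a fixed vertex $v$, colour $i$ then uses only edges through $v$, so deleting $v$ leaves all $\binom{15}{3}$ triples of $K_{15}^{(3)}$ coloured with the remaining $9$ colours and still no monochromatic $P$ — contradicting $R(P;9)=15$ (Theorem~\ref{past}). When a heavy class instead sits inside a higher-order extremal configuration, one cannot merely delete a vertex; here I would use the explicit descriptions of $\Ex^{(2)}(16;P),\dots,\Ex^{(5)}(16;P)$ to show that the complement of such a configuration in $K_{16}^{(3)}$ is too large to be split into the remaining $P$-free colour classes (again via the Tur\'an-number bounds, now with fewer colours), or else to exhibit a monochromatic $P$ outright.

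I expect the genuine obstacle to be precisely this last step. The first-order configuration is unique and trivially reducible to a smaller Ramsey number, but the extremal families of orders $2$ through $5$ are more numerous and much less rigid, so the bookkeeping over all the ways colour classes can embed into them — and the verification that the edge budget $560$ is never attained — is where the real work concentrates. This is also the reason the present paper is needed: for $r\le 9$ the Tur\'an numbers of orders at most $4$ already close the argument, whereas the extra colour at $r=10$ gives the colouring enough freedom that one must descend one level further and invoke $\ex^{(5)}(16;P)$. Combining the contradiction so obtained with the lower bound $R(P;10)\ge 16$ yields $R(P;10)=16$.
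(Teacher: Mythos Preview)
Your outline has the right scaffolding --- averaging against the Tur\'an hierarchy and then analysing the extremal containers --- but there are two concrete gaps. First, your parenthetical ``the pure averaging bound $560\le 10\,\ex^{(5)}(16;P)$ already fails'' is false: $\ex^{(5)}(16;P)=56$ and $10\cdot 56=560$ on the nose, so averaging alone does \emph{not} exclude the possibility that every colour class sits in $\Ex^{(5)}(16;P)=\{K_6\cup S_{10}\}$. The paper kills this case by a parity argument: every vertex of $K_{16}$ has degree $\binom{15}{2}=105$, while every vertex degree in $K_6\cup S_{10}$ is even ($10$, $36$, or $8$), so $K_{16}$ cannot decompose into ten copies of $K_6\cup S_{10}$. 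You would need to supply this (or an equivalent) argument.

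Second, your dichotomy ``star $\Rightarrow$ delete a vertex and invoke $R(P;9)=15$; otherwise bound the complement'' is too coarse. The paper's key observation is that \emph{all} of the order $1$--$4$ extremal $3$-graphs at $n=16$ --- $S_{16}$, $\co(16)$, $K_4\cup S_{12}$, $\ro(16)$ --- share a star-centre whose removal leaves at most four stray edges of that colour; deleting the centre and those edges eliminates the colour entirely at the cost of a small, tracked edge deficit. One does \emph{not} then appeal to $R(P;9)=15$ (the remaining $15$-vertex $3$-graph is no longer complete); instead the argument iterates, re-averaging at each step ($n=15,14,13,\dots$) and using lower-order Tur\'an numbers to again isolate a ``centred'' colour. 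Along the way two further non-averaging obstructions must be dispatched (at $n=13$ the $2K_6\cup K_1$/$\co(13)$ decompositions, and at $n=12$ a bipartite analysis of edge-disjoint $K_6\cup K_6$'s), before the process terminates at a $2$-colouring of $\ge 50$ edges on $8$ vertices, contradicting $\ex^{(1)}(8;P)=21$. Your ``complement too large'' idea is in spirit this same bookkeeping, but it has to be carried out step by step rather than in one shot.
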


It seems that in order to make a further progress in computing the Ramsey numbers $R(P;r)$, $r\ge11$, one would need to determine still higher order Tur\'an numbers $\mathrm{ex}^{(s)}(n;P)$, at least for some small values of $n$.

Throughout, we denote by $S_n$ the 3-graph on $n$ vertices and with $\binom {n-1}2$ edges, in which one vertex, referred to as \emph{the center}, forms  edges with all  pairs of the remaining vertices.
Every sub-3-graph of $S_n$ without isolated vertices is called \emph{a star}, while $S_n$ itself is called  \emph{the full star}. We  denote by $C$ \emph{the triangle}, that is, a 3-graph with six vertices $a,b,c,d,e,f$ and three edges $\{a,b,c\}$, $\{c,d,e\}$, and $\{e,f,a\}$. Finally, $M$ stands for a pair of disjoint edges. For a given 3-graph $H$ and a vertex $v\in V(G)$ we denote by $\deg_H(v)$ the number of edges in $H$ containing $v$.

 In the next section we state some known and new results on  Tur\'an numbers for $P$, including Theorem \ref{ex5} which provides a complete formula for  $\ex^{(5)}(n;P)$.  We also define conditional Tur\'an numbers and quote from  \cite{JPRr} and \cite{P} some useful
  lemmas  about the  conditional Tur\'an numbers  with respect to  $P$,  $C$,  $M$.
 Then, in Section \ref{proofRam},   we prove Theorem \ref{main}, while
 the remaining sections  are  devoted to proving Theorem \ref{ex5}.

\section{Tur\'an numbers}\label{turan}

We restrict ourselves exclusively to the case $k=3$ only.
A  celebrated result of Erd\H os, Ko, and Rado \cite{EKR} asserts, in the case of $k=3$, that for $n\ge 6$, $\mathrm{ex}^{(1)}(n;M)=\binom{n-1}2$. Moreover, for $n\ge7$,  $\mathrm{Ex}^{(1)}(n;M)=\{S_n\}$.
We will need the higher order versions of this Tur\'an number, together with its extremal families. The second of these numbers has been found by Hilton and Milner,  \cite{HM} (see \cite{FF2} and \cite{P} for a simple proof). For a given set of vertices $V$, with $|V|=n\ge 7$, let us define two special 3-graphs. Let $x,y,z,v\in V$ be four different vertices of $V$. We set
$$
G_1(n)=\left\{\{x,y,z\}\right\}\cup\left\{h\in \binom{V}{3}: v\in h, h\cap \{x,y,z\}\neq\emptyset\right\},
$$
$$
G_2(n)=\left\{\{x,y,z\}\right\}\cup\left\{h\in \binom{V}{3}: |h\cap \{x,y,z\}|=2\right\}.
$$
Note that for  $i\in \{1,2\}$, $M\not\subset G_i(n)$ and $|G_i(n)|=3n-8$.

\begin{theorem}[\cite{HM}]\label{ntif}
	For  $n\ge 7$,  $\mathrm{ex}^{(2)}(n;M)=3n-8$ and $\Ex^{(2)}(n;M)=\{G_1(n),G_2(n)\}$.
\end{theorem}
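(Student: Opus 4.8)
The plan is to prove the Hilton--Milner theorem for 3-graphs in the slightly unusual ``second order Tur\'an number'' language of the paper. Recall from Theorem~\ref{ntif} as already quoted that $\mathrm{ex}^{(1)}(n;M)=\binom{n-1}2$ and, for $n\ge 7$, $\mathrm{Ex}^{(1)}(n;M)=\{S_n\}$. So $\mathrm{ex}^{(2)}(n;M)$ is, by definition, the maximum number of edges in an $M$-free 3-graph on $n$ vertices that is \emph{not} a sub-3-graph of any copy of $S_n$, i.e.\ an $M$-free 3-graph with no vertex meeting all of its edges (a \emph{non-star}, or \emph{non-trivially intersecting} family). The lower bound is immediate: both $G_1(n)$ and $G_2(n)$ are intersecting (one checks directly that any two edges share a vertex), neither has a vertex in all edges (the edge $\{x,y,z\}$ together with the rest kills any single common vertex), and both have exactly $3n-8$ edges, as noted right after their definition. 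Hence $\mathrm{ex}^{(2)}(n;P)\ge 3n-8$ and these two graphs are candidates for $\Ex^{(2)}(n;M)$. The real content is the matching upper bound together with the uniqueness of $G_1(n),G_2(n)$.

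For the upper bound I would run the standard Hilton--Milner argument adapted to $k=3$. Let $H$ be an $M$-free 3-graph on vertex set $V$, $|V|=n\ge7$, that is not a star, i.e.\ $\bigcap_{h\in H}h=\emptyset$. Fix any edge $A=\{x,y,z\}\in H$. Because $H$ is not a star, for each of $x,y,z$ there is an edge of $H$ avoiding it; in particular there is an edge $B\in H$ with $x\notin B$, and since $H$ is intersecting, $B\cap A\subseteq\{y,z\}$ and $B\cap A\ne\emptyset$. Now partition the edges of $H$ according to their intersection pattern with $A$: every edge meets $A$ (intersecting family), so $E(H)$ splits into edges meeting $A$ in $\ge2$ points and edges meeting $A$ in exactly one point. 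The edges meeting $A$ in $\ge 2$ points number at most $3(n-3)+1$ (the $+1$ for $A$ itself; for each of the three pairs inside $A$ there are $n-3$ ways to pick the third vertex). For the edges meeting $A$ in exactly one point, the key observation is that each such edge must also meet $B$ (intersecting), and $B$ has only three vertices; using $B\cap A\subseteq\{y,z\}$, an edge $h$ with $|h\cap A|=1$ and $h\cap B\ne\emptyset$ either contains the (at most one) vertex of $B\setminus A$ together with a vertex of $A$, or contains two vertices of $A\cap B$ --- a short case analysis (branching on whether the common vertex of $H$-edges-meeting-$A$-once is forced into a fixed point) bounds their number. Carefully done, the two bounds add up to at most $3n-8$, with equality analysis pinning the structure.

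The uniqueness part is where I expect the main obstacle, and it is the part that needs real care rather than a one-line estimate. Assuming $|E(H)|=3n-8$, one must show $H\cong G_1(n)$ or $H\cong G_2(n)$. The strategy is to trace through the equality cases in the counting above. If \emph{no} edge of $H$ meets $A$ in exactly one vertex, then every edge of $H$ meets $A$ in at least two vertices, and having $3n-8$ such edges forces $H$ to consist of $A$ together with \emph{all} $3(n-3)$ edges sharing a pair with $A$; but that family is not intersecting for $n\ge 7$ (two edges on disjoint pairs of $A$ and disjoint third vertices are disjoint), so one is forced to drop edges, and optimizing which ones to drop while staying intersecting and non-star yields exactly $G_2(n)$. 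If some edge meets $A$ in exactly one vertex, then --- because $H$ is intersecting --- one shows there is a vertex $v$ lying in ``almost all'' edges: every edge not equal to $A$ and not of a special type must pass through $v$, and chasing the equality forces $v$ to meet every edge except $A$, and moreover every edge through $v$ must meet $A$; this is precisely the description of $G_1(n)$. The delicate points are (a) showing the two cases are exhaustive and (b) showing that no ``hybrid'' family reaches $3n-8$ edges, which amounts to verifying that once you deviate from the $G_1$ pattern you lose at least one edge. I would handle (b) by a clean lemma: in an intersecting non-star 3-graph, if some edge meets the fixed edge $A$ in exactly one point, then all but at most one edge pass through a common vertex $v\notin A$; then a direct count gives $|E(H)|\le 1 + \deg_H(v) \le 1 + (\text{edges through }v\text{ meeting }A) \le 1 + 3(n-3) - (\text{forbidden overlaps})$, and equality is attained only by $G_1(n)$. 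Since the paper cites \cite{FF2} and \cite{P} for ``a simple proof'', I would reproduce that streamlined argument rather than Hilton and Milner's original shifting proof, as it localizes everything around the single edge $A$ and a single vertex $v$ and makes the extremal characterization fall out transparently.
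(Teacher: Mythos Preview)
The paper does not prove Theorem~\ref{ntif} at all; it is quoted as a known result of Hilton and Milner \cite{HM}, with a parenthetical pointer to simpler proofs in \cite{FF2} and \cite{P}. So there is no ``paper's own proof'' to compare your proposal against.

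As for your sketch itself, the overall line (fix an edge $A=\{x,y,z\}$, split the remaining edges by $|h\cap A|$) is the standard one, but there is a concrete slip in your uniqueness analysis. In the case where every edge of $H$ meets $A$ in at least two vertices you write that the full family ``is not intersecting for $n\ge 7$ (two edges on disjoint pairs of $A$ and disjoint third vertices are disjoint)''. This is wrong: $A$ has only three elements, so any two $2$-subsets of $A$ share a vertex, and hence any two edges meeting $A$ in at least two points automatically intersect inside $A$. In fact that full family is exactly $G_2(n)$, one of the two extremal families, so this case is immediate rather than requiring an ``optimizing which ones to drop'' argument. (A minor typo: in your lower-bound line you wrote $\mathrm{ex}^{(2)}(n;P)$ where you mean $\mathrm{ex}^{(2)}(n;M)$.) The other case, where some edge meets $A$ in exactly one point, is where the actual work lies; your description there (``all but at most one edge pass through a common vertex $v\notin A$'') is along the right lines but is asserted rather than argued, and getting the exact bound $3n-8$ with the $G_1(n)$ characterization out of it needs more than you have written.
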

\noindent Later, we will use the fact that   $C\subset  G_i(n)\not\supset P$, $i=1,2$.

 Recently, the third order Tur\'an number for $M$ has been established for general $k$ by Han and Kohayakawa in \cite{HK}. Let $G_3(n)$ be the 3-graph on $n$ vertices, with distinguished vertices $x,y_1,y_2,z_1,z_2$ whose edge set consists of all edges spanned by $x,y_1,y_2,z_1,z_2$ except for $\{y_1,y_2,z_i\}$, $i=1,2$, and all edges of the form $\{x,z_i,v\}$, $i=1,2$, where $v\not\in\{x,y_1,y_2,z_1,z_2\}$.

\begin{theorem}[\cite{HK}]\label{ntntif}
	For  $n\ge 7$,  $\mathrm{ex}^{(3)}(n;M)=2n-2$ and  $\Ex^{(3)}(n;M)=\{G_3(n)\}$.
\end{theorem}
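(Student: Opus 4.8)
The plan is to prove both $\mathrm{ex}^{(3)}(n;M)=2n-2$ and $\Ex^{(3)}(n;M)=\{G_3(n)\}$ by a covering‑number analysis of intersecting $3$-graphs, using throughout that ``$M$-free'' simply means ``every two edges meet'' and that any single edge of such an $H$ meets all of $E(H)$, so the covering number $\tau(H)$ (the least size of a vertex set meeting every edge) satisfies $\tau(H)\le 3$. For the lower bound I would verify that $G_3(n)$ is a legitimate $(3)$-extremal candidate. It is $M$-free: its eight ``core'' edges lie inside the $5$-set $\{x,y_1,y_2,z_1,z_2\}$ and hence pairwise intersect, each pendant edge $\{x,z_i,v\}$ contains $x$ and meets the two core edges not through $x$, namely $\{y_1,z_1,z_2\}$ and $\{y_2,z_1,z_2\}$, in $z_i$, and any two pendant edges share $x$. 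It has $8+2(n-5)=2n-2$ edges. And it lies in none of $S_n,G_1(n),G_2(n)$: since $\deg_{G_3(n)}(x)=2n-4<2n-2=|E(G_3(n))|$, no vertex meets every edge, so $G_3(n)\not\subseteq S_n$; since $2n-4<2n-3=|E(G_3(n))|-1$ while in $G_1(n)$ the distinguished vertex lies in all but one edge, $G_3(n)\not\subseteq G_1(n)$; and if $G_3(n)\subseteq G_2(n)$ with core triple $T$, then for any one of the $\ge 2$ pendant vertices $v$ the edges $\{x,z_1,v\}$ and $\{x,z_2,v\}$ force $\{x,z_1,z_2\}\subseteq T$, so $T=\{x,z_1,z_2\}$, contradicting $|\{x,y_1,y_2\}\cap T|=1$. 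Hence $\mathrm{ex}^{(3)}(n;M)\ge 2n-2$.

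For the upper bound, let $H$ be $M$-free on $n\ge 7$ vertices with $|E(H)|\ge 2n-2$. If $\tau(H)=1$ then all edges share a vertex and $H\subseteq S_n$. If $\tau(H)=3$ then by the classical bound $|E(H)|\le 10<2n-2$, a contradiction. So $\tau(H)=2$; fix a minimal cover $\{a,b\}$, put $W:=V(H)\setminus\{a,b\}$ with $m:=|W|=n-2$, let $L_a$ and $L_b$ be the links of $a$ and of $b$ inside $W$ (regarded as graphs on $W$), and let $C$ be the set of edges containing both $a$ and $b$, so that $|C|\le m$ and $|E(H)|=|E(L_a)|+|E(L_b)|+|C|$. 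Minimality of the cover forces $L_a\ne\emptyset\ne L_b$, and the intersecting property of $H$ says exactly that $L_a$ and $L_b$ are \emph{cross-intersecting} (every edge of one meets every edge of the other). From $|E(H)|\ge 2n-2$ and $|C|\le m$ we get $|E(L_a)|+|E(L_b)|\ge m+2$.

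The heart of the proof is a classification of nonempty cross-intersecting graph pairs $(L_a,L_b)$ on an $m$-set, $m\ge 5$, with $|E(L_a)|+|E(L_b)|\ge m+2$. Using that a graph with no two disjoint edges is a star or a triangle, one shows that, after possibly swapping $L_a$ and $L_b$, exactly one of the following holds: (1) $L_a$ and $L_b$ are both subgraphs of the star at a common vertex $w$; (2) $L_b=\{\{u,v\}\}$ is a single edge and $L_a$ is contained in the family of all $2$-subsets of $W$ meeting $\{u,v\}$; or (3) $|E(L_a)|+|E(L_b)|=m+2$, $L_a$ is the full star at some $w$ together with one more edge $\{u,v\}$ with $w\notin\{u,v\}$, and $L_b=\{\{w,u\},\{w,v\}\}$. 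Translating back: in case (1) every edge of $H$ meets the triple $\{a,b,w\}$ in at least two vertices, so $H\subseteq G_2(n)$ with core triple $\{a,b,w\}$; in case (2) every edge of $H$ either equals $\{b,u,v\}$ or contains $a$ and meets $\{b,u,v\}$, so $H\subseteq G_1(n)$ with core triple $\{b,u,v\}$ and distinguished vertex $a$; and in case (3) we get $|C|=m$ and $|E(H)|=2n-2$, and reading off the edges shows $H\cong G_3(n)$. Hence every $M$-free $H$ with $|E(H)|\ge 2n-1$ lies inside $S_n$, $G_1(n)$ or $G_2(n)$, so is not $(3)$-extremal, giving $\mathrm{ex}^{(3)}(n;M)\le 2n-2$ and, with the lower bound, $\mathrm{ex}^{(3)}(n;M)=2n-2$; moreover any $M$-free $H$ with $|E(H)|=2n-2$ that lies in none of $S_n,G_1(n),G_2(n)$ must fall under case (3), hence $H\cong G_3(n)$, so $\Ex^{(3)}(n;M)=\{G_3(n)\}$.

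The main obstacle is the cross-intersecting classification: carefully sorting the near-extremal pairs — in particular separating the $G_3(n)$-type link, where the edge sum is exactly $m+2$, from the slightly larger configurations that must yield $G_1(n)$ or $G_2(n)$ — requires an elementary but fiddly case analysis on the value of $|E(L_b)|\in\{1,2,3,4\}$ when $L_a$ contains two disjoint edges. A secondary, standard ingredient is the sharp bound of $10$ on the number of edges of an intersecting $3$-graph with covering number $3$, which is exactly what fixes the threshold at $n\ge 7$.
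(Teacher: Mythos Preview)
The paper does not prove this theorem; it is quoted from Han and Kohayakawa~\cite{HK} without proof, so there is nothing in the paper to compare against directly. Your argument is correct and is the natural one (and is in the spirit of~\cite{HK}, specialised to $k=3$): split intersecting $3$-graphs by covering number, dispatch $\tau=1$ (sub-star) and $\tau=3$ (bounded size) immediately, and for $\tau=2$ reduce to classifying near-extremal cross-intersecting pairs of link graphs on $n-2$ vertices, which you carry out accurately, identifying the three outcomes with $G_2(n)$, $G_1(n)$, and $G_3(n)$ respectively. Two small points worth tightening: when excluding $G_3(n)\subseteq G_2(n)$ you should first note that the core triple $T$ can contain at most one pendant vertex (since every core edge lies inside $\{x,y_1,y_2,z_1,z_2\}$ and must meet $T$ in at least two vertices), which is what guarantees a pendant $v\notin T$ for all $n\ge 7$; and the bound $|E(H)|\le 10$ for intersecting $3$-graphs with $\tau=3$, while indeed classical (Frankl; the extremal example is $\binom{[5]}{3}$), deserves an explicit reference rather than just the label ``classical''.
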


For $k=3$ we were able to take the next step and determine the next Tur\'an number for $M$.

\begin{theorem}[\cite{P}]\label{ntntntif}
	For  $n\ge 7$,  $\mathrm{ex}^{(4)}(n;M)=n+4$.
\end{theorem}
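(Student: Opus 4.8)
The plan is to follow the same two-part strategy that underlies all the earlier Tur\'an-number results for $M$: first exhibit a construction showing $\ex^{(4)}(n;M)\ge n+4$, then prove the matching upper bound by a stability-type argument. For the lower bound I would look for an $M$-free $3$-graph $H$ on $n$ vertices with exactly $n+4$ edges that is not contained in any of $S_n$, $G_1(n)$, $G_2(n)$, or $G_3(n)$. A natural candidate is a small ``book''/sunflower-like configuration through a common pair together with a few extra edges: e.g.\ fix a pair $\{x,y\}$ and take all $n-2$ edges $\{x,y,v\}$, then add a bounded number (here, $6$) of further edges among a fixed small vertex set so that every pair of edges still meets, while destroying containment in each of the four known extremal $3$-graphs. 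One has to check $M$-freeness (any two edges share a vertex, which is easy if all the extra edges lie in a fixed small set meeting every $\{x,y,v\}$) and, crucially, non-containment in $S_n,G_1,G_2,G_3$ — since those have $\binom{n-1}2$, $3n-8$, $3n-8$, $2n-2$ edges respectively, for large $n$ an $(n+4)$-edge graph can only fail to be ``new'' if it embeds into one of them as a proper subgraph, so I would pick the extra edges to create, say, three pairwise disjoint ``private'' vertices or a degree sequence incompatible with all four templates.

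For the upper bound I would take an $M$-free $H$ on $n$ vertices with $|E(H)|\ge n+5$ and show it must sit inside one of $S_n,G_1(n),G_2(n),G_3(n)$. Since $M$-free $3$-graphs are exactly the intersecting $3$-graphs, I can invoke the structure theory behind Theorems \ref{ntif} and \ref{ntntif}: an intersecting $3$-graph either is a star (contained in $S_n$), or is ``close to'' a star, or has covering number (minimum vertex cover, equivalently minimum transversal) at least $3$. The classical Hilton--Milner circle of ideas says that a large intersecting family with no common element must essentially look like $G_1$ or $G_2$ (covering number $2$) or be very small. So the argument splits on the covering number $\tau(H)$: if $\tau(H)=1$, $H\subseteq S_n$; if $\tau(H)=2$, a short analysis of which edges can appear forces $H\subseteq G_1(n)$ or $H\subseteq G_2(n)$ once $|E(H)|$ is close to $3n-8$, and for smaller edge counts one shows $H\subseteq G_3(n)$; if $\tau(H)=3$, then $H$ lives on the (bounded) union of edges meeting a fixed $3$-element transversal, so $|E(H)|$ is bounded by a constant, contradicting $|E(H)|\ge n+5$ for $n$ large, and the finitely many remaining small $n$ are handled directly.

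I expect the main obstacle to be the middle case $\tau(H)=2$: here one must rule out the possibility of an $M$-free, $\tau=2$ configuration with between $n+5$ and $3n-8$ edges that is \emph{not} a subgraph of any of the four named graphs. This requires a careful case analysis of the two-vertex cover $\{a,b\}$: almost all edges contain $a$ or $b$, so write $H=H_a\cup H_b$ with $H_a$ the edges through $a$ not through $b$, $H_b$ symmetric, $H_{ab}$ those through both; intersectingness forces the link of $a$ (as a graph on $n-1$ vertices) and the link of $b$ to ``cross-intersect'' with the few edges avoiding $a$ and those avoiding $b$, and one must show that any such configuration with enough edges collapses into the $G_1/G_2/G_3$ patterns. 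This is essentially a finite but somewhat intricate extremal-graph-theory argument on the links, and getting the thresholds exactly right (so the bound is the sharp $n+4$ rather than something larger) is the delicate part; the clean additive form $n+4$ suggests the worst surviving configuration is indeed a near-star with a constant surplus, which is what the construction in the first paragraph should match.
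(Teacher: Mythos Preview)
The paper does not prove Theorem~\ref{ntntntif}; it is quoted without argument from the companion manuscript~\cite{P}, so there is no in-paper proof against which to compare your attempt.

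Judged on its own, your proposal is a sensible outline but not a proof. Two concrete gaps remain.

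\emph{Lower bound.} You never actually exhibit the family. ``All $\{x,y,v\}$ plus six further edges in a fixed small set'' is the right shape, but a careless choice collapses back into $S_n$: if every extra edge still contains $y$ (or $x$), the whole family is a star. One construction that does work is to fix three further vertices $a,b,c$ and add the six triples $\{x,a,b\},\{x,a,c\},\{x,b,c\},\{y,a,b\},\{y,a,c\},\{y,b,c\}$; this gives $n+4$ pairwise intersecting triples, and since every vertex is avoided by at least three edges it cannot embed in $S_n$, $G_1(n)$, $G_2(n)$, or $G_3(n)$ (in $G_3$ the distinguished vertex is avoided by exactly two edges). Until some such family is written down and these non-containments are verified, the lower bound is not established.

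\emph{Upper bound.} Your covering-number split is the right skeleton: $\tau(H)=1$ gives $H\subseteq S_n$ immediately, and $\tau(H)=3$ forces $|H|$ to be bounded by an absolute constant, hence below $n+5$ for all $n\ge 7$. But the entire content of the theorem sits in the case $\tau(H)=2$, and there you only assert that ``for smaller edge counts one shows $H\subseteq G_3(n)$''. That sentence \emph{is} the theorem: what must be proved is that every intersecting $3$-graph with a two-element cover $\{a,b\}$ and at least $n+5$ edges embeds in one of $S_n,G_1(n),G_2(n),G_3(n)$. This requires an explicit structural analysis of the links of $a$ and $b$ and of the edges through both, and is precisely where the sharp constant $+4$ (rather than $+5$ or $+3$) is pinned down. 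You correctly identify this as the obstacle, but identifying it is not the same as carrying it out.
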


\bigskip
The number $\binom{n-1}2$ serves as the Tur\'an number for two other 3-graphs, $C$ and $P$.
 The Tur\'an
number $\ex^{(1)}(n;C)$ has been determined in \cite{FF} for $n\ge 75$ and later for all $n$ in
\cite{CK}.
\begin{theorem}[\cite{CK}]\label{c3}
    For $n\ge 6$, $\ex^{(1)}(n;C)=\binom{n-1}{2}$. Moreover, for $n\ge8$, \newline $\mathrm{Ex}^{(1)}(n;C)=\{S_n\}$.
\end{theorem}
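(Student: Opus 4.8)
The lower bound needs only the remark that no vertex of $C$ lies on all three of its edges --- the edges $\{a,b,c\}$, $\{c,d,e\}$, $\{e,f,a\}$ meet pairwise in the three \emph{distinct} vertices $c$, $e$, $a$ --- so $C$ is never a subgraph of a star; in particular $S_n$ is $C$-free and $\ex^{(1)}(n;C)\ge\binom{n-1}{2}$. The engine of the upper bound is a forbidden-configuration lemma which I would prove first: if $e,e'\in E(H)$ with $|e\cap e'|=1$, write $e=\{c,a_1,a_2\}$ and $e'=\{c,b_1,b_2\}$ (so $c,a_1,a_2,b_1,b_2$ are five distinct vertices); then for each $i,j\in\{1,2\}$ the only edges of $H$ that can contain the pair $\{a_i,b_j\}$ are $\{a_i,b_j,c\}$, $\{a_i,b_j,a_{3-i}\}$ and $\{a_i,b_j,b_{3-j}\}$, since any other edge through $\{a_i,b_j\}$ would, together with $e$ and $e'$, span a copy of $C$. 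Rephrased via vertex links: if $v\notin\{x,y,z\}$ and $\{x,y,z\}\in E(H)$, then at most one of $x,y,z$ may have a neighbour outside $\{x,y,z\}$ in the link of $v$, for otherwise two such private link-neighbours, together with $\{x,y,z\}$, give a copy of $C$.

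I would prove the value of $\ex^{(1)}(n;C)$ and the uniqueness of the extremal graph together, by induction on $n$, treating $n\in\{6,7,8\}$ as finite base cases. So fix $n\ge9$, assume the statement for $n-1$, and let $H$ be a $C$-free $3$-graph on $n$ vertices with $|E(H)|$ maximum. If $H$ has a vertex $v$ with $\deg_H(v)\le n-2$, then $H-v$ is $C$-free on $n-1$ vertices, so $|E(H)|\le\ex^{(1)}(n-1;C)+(n-2)=\binom{n-2}{2}+(n-2)=\binom{n-1}{2}$, which together with the lower bound gives $\ex^{(1)}(n;C)=\binom{n-1}{2}$; moreover, for $n\ge8$ the inductive uniqueness (valid since $n-1\ge8$) forces $H-v=S_{n-1}$ with some centre $c$ and $\deg_H(v)=n-2$, and then an edge $\{v,x,y\}$ of $H$ with $c\notin\{x,y\}$ would span a copy of $C$ with two edges of $S_{n-1}$ through $c$ (there is room to pick the two remaining vertices distinct as $n\ge6$), so every edge at $v$ contains $c$ and $H=S_n$. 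It therefore suffices to rule out the case that $H$ has minimum degree at least $n-1$.

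The minimum-degree-$\ge(n-1)$ case is where the real work is; the goal is to show such an $H$ has $|E(H)|<\binom{n-1}{2}$, which is impossible for a maximum $C$-free graph. If $H$ is intersecting this is immediate from the Erd\H os--Ko--Rado theorem \cite{EKR} and the fact that its unique extremal graph $S_n$ has minimum degree $n-2$. If $H$ is not intersecting it has two disjoint edges; and if in addition no two edges of $H$ meet in exactly one vertex, then every two edges meet in $0$ or $2$ vertices, which quickly forces $H$ to be a vertex-disjoint union of ``books'' (families of edges through a common pair of vertices) and copies of $K_4^{(3)}$, whence $|E(H)|\le n<\binom{n-1}{2}$. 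The remaining subcase --- $H$ has a disjoint pair $e_0,f_0$ \emph{and} a pair of edges meeting in exactly one vertex --- is the main obstacle. Here I would fix $e_0,f_0$, partition the edges of $H$ according to their intersections with $e_0\cup f_0$, and bound each class by repeatedly invoking the forbidden-configuration lemma on the single-vertex-intersecting pairs (``cherries'') formed by $e_0$, by $f_0$, and by the other edges meeting them; this bookkeeping is unavoidably lengthy, but should again yield $|E(H)|<\binom{n-1}{2}$. Combined with the previous paragraph, this shows a maximum $C$-free $H$ must have a vertex of degree $\le n-2$, hence $|E(H)|=\binom{n-1}{2}$ and (for $n\ge8$) $H=S_n$, completing the induction.
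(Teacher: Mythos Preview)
The paper does not give its own proof of Theorem~\ref{c3}; it is simply quoted from Cs\'ak\'any--Kahn~\cite{CK} (and Frankl--F\"uredi~\cite{FF} for large $n$), where the result is obtained by an algebraic/homological argument rather than by direct counting. So there is no in-paper proof to compare your approach against.

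Judged on its own, your proposal has a genuine gap at the decisive point. Your induction reduces everything to showing that a $C$-free $3$-graph $H$ on $n\ge9$ vertices with minimum degree at least $n-1$ satisfies $|E(H)|<\binom{n-1}{2}$. You dispose of the intersecting subcase via Erd\H os--Ko--Rado and of the ``no two edges meet in exactly one vertex'' subcase by a structural observation, both correctly. But the remaining subcase --- $H$ contains two disjoint edges \emph{and} a cherry --- is exactly where the difficulty of the theorem lives, and you do not actually handle it: you only fix $e_0,f_0$, announce a partition of $E(H)$, and assert that ``this bookkeeping is unavoidably lengthy, but should again yield $|E(H)|<\binom{n-1}{2}$''. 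That is not a proof; it is a hope. Your forbidden-configuration lemma is correct and useful, but by itself it controls only edges meeting a fixed cherry, not the global edge count of a high-minimum-degree $C$-free hypergraph. Indeed, the reason Frankl--F\"uredi needed $n\ge75$ and Cs\'ak\'any--Kahn resorted to homological methods to cover all $n$ is precisely that this kind of local bookkeeping is delicate for small $n$; without a concrete counting argument here, the induction does not close.

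A secondary issue: you declare $n\in\{6,7,8\}$ to be ``finite base cases'' without verifying them. For $n=8$ you also need the uniqueness of $S_8$ to launch the inductive uniqueness argument at $n=9$, and for $n=6,7$ the extremal family is not $\{S_n\}$ alone, so these checks are not entirely routine.
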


% For large $n$, the Tur\'an numbers for longer (than three) loose 3-uniform paths were found in \cite{Kostochka}.
%The case of length three has been omitted in \cite{Kostochka}, probably because the authors thought it had been taken care of in \cite{FJS}, where $k$-uniform %loose paths were considered, $k\ge4$.
 %However,  the method used in  \cite{FJS}
 %did not quite work for 3-graphs. 
 In \cite{JPR},  we filled an omission of \cite{FJS} and \cite{Kostochka} and calculated $\ex^{(1)}(n;P)$ for all $n$. Given two $3$-graphs $F_1$
and $F_2$, by $F_1\cup F_2$  denote a vertex-disjoint union of $F_1$ and $F_2$. If $F_1=F_2=F$ we will sometimes write $2F$ instead of $F\cup F$.

\begin{theorem}[\cite{JPR}]\label{ex1}
 $$\ex^{(1)}(n;P)=\left\{ \begin{array}{ll}
 \binom n3 & \textrm{ and $\quad Ex^{(1)}(n;P)=\{K_n\}\qquad\quad$\;\; for $n\le6,$ }\\
20 & \textrm{ and $\quad Ex^{(1)}(n;P)=\{K_6\cup K_1\}\quad\,\,$ for $n=7,$ }\\
\binom{n-1}{2} & \textrm{ and $\quad Ex^{(1)}(n;P)=\{S_n\}\qquad\quad$\;\;\; for $n\ge 8$.}
\end{array} \right.
$$
\end{theorem}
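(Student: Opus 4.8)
The cases $n\le 6$ are immediate, since $P$ has $7$ vertices: no $3$-graph on at most $6$ vertices contains $P$, so $\ex^{(1)}(n;P)=\binom n3$ and $K_n$ is the unique extremal $3$-graph. For every larger $n$ the plan rests on a convenient reformulation of $P$-freeness. Call two edges $e_1,e_2$ with $|e_1\cap e_2|=1$ a \emph{cherry}; writing $e_1=\{a,b,c\}$ and $e_2=\{c,d,e\}$, the reformulation is: $H$ contains $P$ if and only if it contains a cherry together with a third edge $f$ that is disjoint from one of the two edges of the cherry and meets the other in exactly one vertex. Consequently, in a $P$-free $H$, around any cherry as above, every edge disjoint from $\{a,b,c\}$ either contains $\{d,e\}$ or avoids it entirely, and every edge disjoint from $\{c,d,e\}$ either contains $\{a,b\}$ or avoids it entirely. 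This purely local condition is what I would push on.

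The main step is a dichotomy for $n\ge 7$ according to whether $H$ contains the triangle $C$. If $H$ is $C$-free, then Theorem~\ref{c3} gives $e(H)\le\ex^{(1)}(n;C)=\binom{n-1}2$, with equality forcing $H=S_n$ when $n\ge 8$; and $S_n$ is $P$-free because every two of its edges meet. So for $n\ge 8$ this branch already yields the claimed value and the claimed extremal $3$-graph, while for $n=7$ it only yields $e(H)\le\binom62=15$. It then remains to show that a $P$-free $H$ that \emph{does} contain a copy of $C$ --- on a $6$-set $W$, say --- satisfies $e(H)<\binom{n-1}2$ for $n\ge 8$ and $e(H)\le 20$ for $n=7$, with the value $20$ attained only by $K_6\cup K_1$. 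Combined with the bound $15<20$ above, this also pins down $K_6\cup K_1$ as the unique $7$-vertex extremal $3$-graph.

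For the "contains $C$" branch I would work with $W$ and $U:=V\setminus W$. Inside $W$ there are at most $\binom63=20$ edges. The three edges of $C$ sitting in $H[W]$ already form cherries, and feeding any edge meeting both $W$ and $U$ into one of them, through the reformulation above, forbids that edge unless its trace on $W$ is one of a short list of admissible singletons and pairs; pushing the same idea one step further --- using an edge of $H[W]$ as the third edge of the forbidden configuration --- shows that a sufficiently rich $H[W]$ forces $W$ to emit no edges at all into $U$, while a sparse $H[W]$ can tolerate only a bounded number of crossing edges. As $H[U]$ is again a $P$-free $3$-graph on $n-6$ vertices, induction on $n$ controls $e(H[U])$, and combining the three contributions gives a bound comfortably below $\binom{n-1}2$ for all $n\ge 8$. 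The base case $n=7$, where $U=\{v\}$ is a single vertex and $e(H)=e(H[W])+\deg_H(v)$, I would settle by hand: if $\deg_H(v)\ge 1$, then for any cherry $g_1,g_2$ of $H[W]$ with $g_1\cap g_2=\{p\}$ and unique $y\in W\setminus(g_1\cup g_2)$, the triple $\{v,x,y\},g_1,g_2$ with $x\in g_1\setminus\{p\}$ is a copy of $P$ as soon as $\{v,x,y\}\in H$; weighing which edges at $v$ this forbids against the identity $e(H[W])=20-\deg_H(v)$ forces $\deg_H(v)=0$, hence $H[W]=K_6$, i.e. $H=K_6\cup K_1$.

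The hard part is the entire "contains $C$" branch: turning the local cherry restrictions into a clean and \emph{tight} global count --- exactly which crossing edges survive, why they cannot accumulate, and, for $n=7$, why having $20$ edges forces the seventh vertex to be isolated. This is a finite but intricate case analysis of how $H[W]$ --- which ranges between $C$ and $K_6$ --- interacts with the handful of admissible edges leaving $W$, and I expect it to be the bulk of the work; by contrast the $C$-free branch is essentially a direct appeal to Theorem~\ref{c3}.
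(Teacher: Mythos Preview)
The paper does not contain a proof of this theorem: it is quoted from \cite{JPR} (see the sentence ``In \cite{JPR}, we filled an omission of \cite{FJS} and \cite{Kostochka} and calculated $\ex^{(1)}(n;P)$ for all $n$'') and is used here only as a tool. So there is no ``paper's own proof'' to compare your proposal against.

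As for the proposal itself, the overall architecture --- splitting according to whether $C\subset H$, quoting Theorem~\ref{c3} on the $C$-free side, and handling the $C$-containing side by isolating a $6$-set $W$ and controlling the crossing edges --- is a standard and viable route, and it is in fact close to how the result is obtained in \cite{JPR}. You are also honest that the ``contains $C$'' branch is the real work and is left as a sketch. Two remarks on what is written. First, in the $n=7$ discussion the identity $e(H[W])=20-\deg_H(v)$ is only valid once you are in the equality case $e(H)=20$; it is not an identity you can use while still proving the upper bound $e(H)\le 20$, so the two tasks (upper bound and uniqueness) should be separated more clearly. Second, your description of which crossing edges survive is vaguer than it needs to be: already the three cherries of $C$ itself (pairs of edges meeting in $a$, $c$, $e$) kill every edge of the form $\{w,u_1,u_2\}$ with $w\in\{a,c,e\}$ and $u_1,u_2\in U$, and similar observations quickly cut down the possibilities; writing out that first layer explicitly would make the sketch much more convincing. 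None of this is a fatal gap --- the approach is correct --- but as it stands the proposal is an outline rather than a proof.
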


\bigskip
%Interestingly, although  the ordinary Tur\'an numbers for the 2-matching $M$ and the 3-path $P$ are equal for $n\ge8$, their higher order counterparts differ %significantly, being, respectively,
 %of linear and quadratic order in $n$.

In \cite{JPRr} we have  completely determined the second order Tur\'an number  $\ex^{(2)}(n;P)$, together with the corresponding
2-extremal 3-graphs. \emph{A comet} $\co(n)$ is an $n$-vertex 3-graph  consisting of the complete 3-graph
$K_4$ and the full star $S_{n-3}$, sharing exactly one vertex which is the center of the
star (see Fig. \ref{FigR1}). This vertex is called  \emph{the center} of the comet, while the set of
the remaining three vertices of the $K_4$ is called its \emph{ head}.

\bigskip
\begin{figure}[!ht]
\centering
\includegraphics [width=7cm]{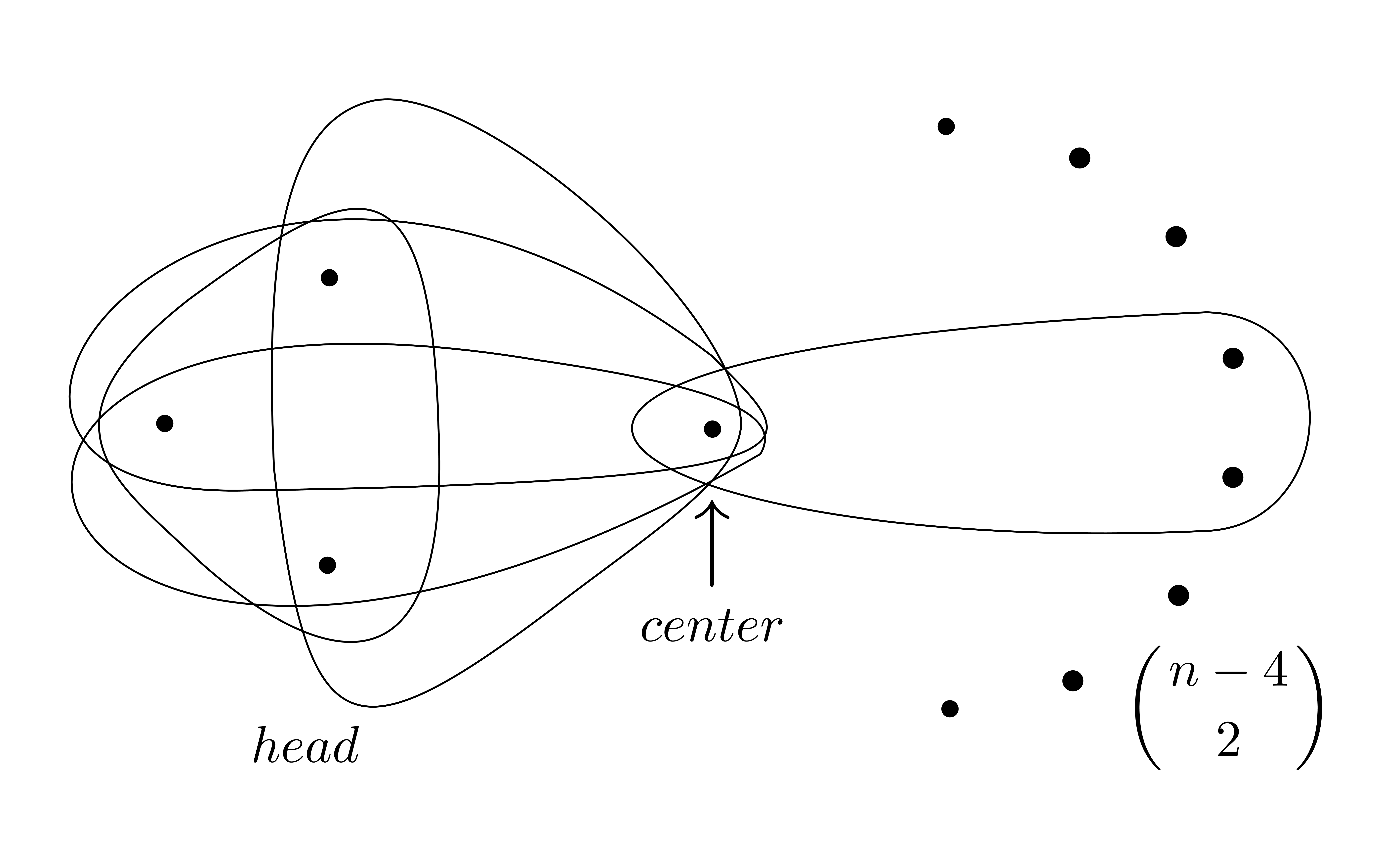}
\caption{The comet $\co(n)$} \label{FigR1}
\end{figure}

\begin{theorem}[\cite{JPRr}]\label{ex2}
    $$\ex^{(2)}(n;P)=\left\{ \begin{array}{ll}
    15 &\textrm{and} \quad  \Ex^{(2)}(n;P)=\{S_7\}\hskip 3.5cm \textrm{for $n=7$},\\
    20 + \binom {n-6}3 & \textrm{and}  \quad \Ex^{(2)}(n;P)=\{K_6 \cup K_{n-6}\}\hskip 1.05cm \textrm{for $8 \le n\le 12$},\\
    40 & \textrm{and}  \quad \Ex^{(2)}(n;P)=\{2K_6\cup K_1,\co(13)\} \hskip 0.6cm\textrm{for }n=13,\\
    4 + \binom{n-4}{2}&\textrm{and}\quad\Ex^{(2)}(n;P)=\{\co(n)\}\hskip 2.65cm\textrm{for }n\ge 14.
    \end{array} \right.
    $$
\end{theorem}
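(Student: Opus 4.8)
The plan is to split Theorem~\ref{ex2} into its easy half (the lower bounds, i.e.\ the constructions) and its hard half (the matching upper bounds, together with the uniqueness of the extremal families).

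\emph{Lower bounds.} It suffices to check that each graph in the statement is $P$-free, has the claimed number of edges, and is contained in no $1$-extremal graph for $P$ (i.e.\ in no copy of $K_6\cup K_1$ when $n=7$ and in no copy of $S_n$ when $n\ge8$, by Theorem~\ref{ex1}). That the disjoint unions of cliques $K_6\cup K_{n-6}$ and $2K_6\cup K_1$ are $P$-free is immediate, since $P$ is connected on seven vertices while no component has more than six; that every star, and in particular $S_7$, is $P$-free follows because no vertex of $P$ lies in all three of its edges, so the centre of a star meets at most two edges of any copy of $P$. For the comet $\co(n)$ the same principle applies: a copy of $P$ in $\co(n)$ would have to use two edges inside the $K_4$, or two edges through the centre, or one of each kind meeting in two vertices, and each of these is impossible since any two edges of $P$ meet in at most one vertex and its three edges have no common vertex. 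Finally none of these graphs sits inside a $1$-extremal graph: $S_7$ has no isolated vertex, so $S_7\not\subseteq K_6\cup K_1$, while each of the others contains a $K_4$ or a $K_6$, hence a copy of the triangle $C$, and $C\not\subseteq S_n$ (again no vertex of $C$ lies in all three of its edges). Counting edges gives $15$, $20+\binom{n-6}{3}$, $40=4+\binom{9}{2}$, and $4+\binom{n-4}{2}$ respectively, so ``$\ge$'' holds in every case.

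\emph{Upper bound: reduction to conditional Tur\'an numbers.} Let $H$ be a $P$-free $3$-graph on $n$ vertices contained in no $1$-extremal graph for $P$; we must bound $|E(H)|$ and, at equality, identify $H$. The two obstructions that drive $H$ away from being a sub-star are a triangle $C$ (since $C\not\subseteq S_n$) and a pair of disjoint edges $M$ (since, for $n\ge7$, the only intersecting $3$-graph of size $\binom{n-1}{2}$ is $S_n$, by Erd\H os--Ko--Rado), so I would argue by cases. If $H$ contains neither $C$ nor $M$, then $H$ is intersecting, triangle-free, and not contained in $S_n$, whence $|E(H)|\le 3n-8$ by Theorem~\ref{ntif} (indeed less, as the extremal graphs there contain $C$; and for $n=7$ the bound $\binom62=15$ from Erd\H os--Ko--Rado already suffices), which is at most the target value for every $n\ge7$. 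If $H$ is intersecting but contains a $C$, one bounds $|E(H)|$ by the maximum size of a $P$-free intersecting $3$-graph containing a triangle. If $H$ contains a pair of disjoint edges, one bounds $|E(H)|$ by the maximum size of a $P$-free $3$-graph containing $M$; here it is convenient to split according to whether $H$ is connected or not, a disconnected $P$-free graph being just a disjoint union of smaller $P$-free graphs, so that $|E(H)|\le\sum_i\ex^{(1)}(n_i;P)$ over the component orders~$n_i$. Granting the two ``conditional Tur\'an numbers'' in the last two cases (which are auxiliary Tur\'an-type results to be established along the way), the upper bound reduces to the elementary comparison of the best clique-packing value $20\lfloor n/6\rfloor+\binom{n\bmod 6}{3}$ with the comet size $4+\binom{n-4}{2}$ (and, for $n=7$, against the full star $S_7$, the maximum clique-packing $K_6\cup K_1$ then being forbidden as it is the $1$-extremal graph).

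\emph{The main obstacle: the conditional Tur\'an numbers.} Proving those two bounds --- equivalently, describing the structure of a $P$-free $3$-graph once a triangle or a pair of disjoint edges has been located --- is where the real work lies. The natural tool is the link: fix a vertex $v$ of large degree, set $L_v=\{\{x,y\}:\{v,x,y\}\in H\}$, and let $H_0$ be the $3$-graph of edges of $H$ avoiding $v$. A short case analysis shows that $P$-freeness is incompatible with $L_v$ and $H_0$ both being rich; for instance, two independent edges $\{a,b\},\{c,d\}$ in $L_v$ together with an edge of $H_0$ through $c$ or $d$ avoiding the other three of $a,b,c,d$ already yield a copy of $P$. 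Hence one of three alternatives holds: $H_0$ is carried by very few vertices, so that $H$ is ``comet-like'' with at most $4+\binom{n-4}{2}$ edges; or $v$ --- and then, iterating, every vertex --- has small degree, so that $H$ has few edges; or $H$ falls apart into components of order at most $6$, so that $|E(H)|\le\sum_i\binom{n_i}{3}$, which a convexity argument maximises by taking as many blocks of size $6$ as possible. Turning this outline into a proof requires tracking the positions of the link, of $H_0$, and of the auxiliary triangle across the several sub-cases that arise, and it is exactly this bookkeeping --- rather than a single idea --- that occupies the bulk of the argument, and where the precise extremal configurations (which head and centre for a comet, which block sizes for a packing) are pinned down.

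\emph{Assembling the cases.} Combining the bounds, the upper bound becomes the comparison above: for $8\le n\le 12$ the packing $K_6\cup K_{n-6}$ wins with $20+\binom{n-6}{3}$ edges; at $n=13$ both $2K_6\cup K_1$ and $\co(13)$ attain $40$; for $n\ge14$ the comet $\co(n)$ strictly wins with $4+\binom{n-4}{2}$ edges; and for $n=7$, with $K_6\cup K_1$ excluded, the best surviving construction is $S_7$ with $15$ edges (a short finite check ruling out $P$-free $7$-vertex $3$-graphs of size between $16$ and $19$ other than $K_6\cup K_1$). In each range the uniqueness of the extremal family then follows from the equality clauses of the conditional Tur\'an numbers established in the previous step, together with the uniqueness statements in Theorems~\ref{ex1}, \ref{ntif}, and~\ref{c3}.
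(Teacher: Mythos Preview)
This theorem is quoted from \cite{JPRr} and is not proved in the present paper; the only hint given here is the identity
\[
\ex^{(2)}(n;P)=\max\{\ex(n;P|M),\ \ex^{(2)}(n;M)\}=\max\{\ex(n;P|M),\ 3n-8\},
\]
so the entire content of the result is the determination of the conditional number $\ex(n;P|M)$ together with its extremal families. Your outline reproduces exactly this decomposition: split on whether $M\subseteq H$, dispose of the $M$-free case with Hilton--Milner (your sub-split by $C$ is harmless but redundant, since Theorem~\ref{ntif} already covers any intersecting $H\nsubseteq S_n$), and reduce the $M$-containing case to a connected/disconnected dichotomy. So the \emph{strategy} is the same as the one the paper attributes to \cite{JPRr}.

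What is missing is the proof itself. You write ``granting the two conditional Tur\'an numbers'' and ``turning this outline into a proof requires tracking the positions of the link, of $H_0$, and of the auxiliary triangle across the several sub-cases'', and then stop. But that tracking \emph{is} the theorem: one has to show that a connected $P$-free $H$ with $M\subseteq H$ either sits inside a comet or has at most the comet number of edges, and that the only competitive disconnected configurations are the clique packings $K_6\cup K_{n-6}$ and $2K_6\cup K_1$. In the source paper this is done via a careful analysis anchored on a copy of $P_2\cup K_3$ (cf.\ Lemmas~\ref{spojny}--\ref{pcppm} here), not merely a high-degree link; your paragraph on links gestures at the right kind of argument but does not establish any bound, let alone the sharp ones with the correct equality cases. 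Until you actually compute $\ex(n;P|M)$ (equivalently, prove Lemma~\ref{PCM} and its connected refinements) and carry out the comparison for each $n$, what you have is a correct plan rather than a proof.
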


\noindent 
%Note that for $n\le 6$ the second order number is not defined, since each 3-graph is a sub-3-graph
%of~$K_n$. The main message behind  the above result is that for $n\ge8$ it provides an upper bound on the number of edges in  an $n$-vertex $P$-free 3-graph %which is not a star.

In \cite{JPRr} ($n=12$) and  in \cite{PR} (for all $n$), we   calculated the third order Tur\'an number for $P$.

\begin{theorem}[\cite{JPRr},\cite{PR}]\label{ex3}
	$$
	\mathrm{ex}^{(3)}(n;P)=
	\left\{\begin{array}{ll}
	3n-8 &\textrm{and} \quad \Ex^{(3)}(n;P)=\{G_1(n),G_2(n)\}\hskip 0.95cm\textrm{for }7\le n\le 10,\\
	25 &\textrm{and} \quad \Ex^{(3)}(n;P)=\{G_1(n),G_2(n),\co(n)\}\hskip 0.5cm\textrm{for } n= 11,\\
	32 &\textrm{and} \quad \Ex^{(3)}(n;P)=\{\co(n)\}\hskip 3cm\textrm{for }n=12,\\
	20 + \binom{n-7}{2}  &\textrm{and} \quad \Ex^{(3)}(n;P)=\{K_6\cup S_{n-6}\}\,\hskip 1.2cm\textrm{for }13\le n\le 14,\\
	4 + \binom{n-5}{2} &\textrm{and} \quad \Ex^{(3)}(n;P)=\{K_4\cup S_{n-4}\}\,\hskip 2.2cm\textrm{for }n\ge 15.
	\end{array}\right.
	$$
\end{theorem}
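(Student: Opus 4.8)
The plan is to bound an extremal $3$-graph for this problem by distinguishing whether it contains two disjoint edges (a copy of $M$) and whether it contains a loose triangle (a copy of $C$), and to control each of the resulting regimes by the conditional Tur\'an numbers for $P$ with respect to $M$ and $C$ recorded in the next section. For the lower bound one checks directly that each $3$-graph named in the theorem is $P$-free — $G_1(n)$ and $G_2(n)$ contain $C$ but not $P$; $\co(n)$ is $C$-free (outside its $K_4$ every edge passes through the center, and $K_4$ has only four vertices) and $P$-free; and $K_6\cup S_{n-6}$ and $K_4\cup S_{n-4}$ are $P$-free because a copy of $P$ would be connected while a $K_6$ is too small to contain $P$ and a star has no loose path — with $3n-8$, $4+\binom{n-4}{2}$, $20+\binom{n-7}{2}$, and $4+\binom{n-5}{2}$ edges respectively. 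Moreover none of them is a sub-$3$-graph of a $1$- or $2$-extremal $3$-graph for $P$: each contains a copy of $C$ or a $K_4$, neither of which embeds into $S_n$; those spanning all $n$ vertices are connected and hence do not embed into the disconnected $K_6\cup K_{n-6}$ or $2K_6\cup K_1$; and $K_6\cup S_{n-6}$, $K_4\cup S_{n-4}$ are disconnected with two nonempty parts and hence do not embed into the connected $\co(n)$. So $\ex^{(3)}(n;P)$ is at least the value claimed.

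For the matching upper bound, let $H$ be an $n$-vertex $P$-free $3$-graph of maximum size subject to not being contained in any $1$- or $2$-extremal $3$-graph for $P$; in particular $H\not\subseteq S_n$, as $S_n$ is the $1$-extremal $3$-graph for $n\ge 8$ and the $2$-extremal one for $n=7$. Suppose first that $H$ is $M$-free. Then, since $\Ex^{(1)}(n;M)=\{S_n\}$, Theorem~\ref{ntif} gives $|E(H)|\le\ex^{(2)}(n;M)=3n-8$, with equality only for $H\in\{G_1(n),G_2(n)\}$; this matches $\ex^{(3)}(n;P)$ for $7\le n\le 11$ and is strictly smaller than it for $n\ge 12$.

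Now suppose $H$ contains disjoint edges $e_1,e_2$. The key consequence of $P$-freeness is that no edge of $H$ meets $e_1$ in exactly one vertex and $e_2$ in exactly one vertex — such an edge together with $e_1$ and $e_2$ would be a copy of $P$ — so the edges of $H$ are rigidly constrained around $e_1\cup e_2$. One then splits according to whether $H$ contains $C$. If $H$ is $C$-free, the conditional Tur\'an number for $P$ with the added hypotheses that $H$ contains $M$ and is $C$-free forces all but a bounded number of edges of $H$ through a single vertex, and, after discarding the $C$-free $3$-graph $\co(n)$ (which is $2$-extremal for $n\ge 13$), the maximal admissible competitor is $\co(12)$ with $32$ edges when $n=12$ and $K_4\cup S_{n-4}$ with $4+\binom{n-5}{2}$ edges when $n\ge 13$. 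If $H$ contains $C$, the conditional Tur\'an number for $P$ with the added hypotheses that $H$ contains both $M$ and $C$ forces $H$ onto a $K_6$-core whose remaining edges span a star or a clique, and, after discarding $K_6\cup K_{n-6}$ ($2$-extremal for $n\le 12$) and $2K_6\cup K_1$ ($2$-extremal for $n=13$), the maximal admissible competitor is $K_6\cup S_{n-6}$ with $20+\binom{n-7}{2}$ edges. Comparing these bounds for each $n$ — $3n-8$ dominates for $7\le n\le 11$ (tied by $\co(11)$ at $n=11$), the $C$-free $M$-containing bound dominates at $n=12$ and for $n\ge 15$, and the $C$-containing $M$-containing bound dominates at $n=13,14$ — gives exactly the five rows of the theorem, and tracking equality throughout identifies $\Ex^{(3)}(n;P)$.

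The main obstacle is the $M$-containing case: upgrading the rigid local structure around a single pair of disjoint edges into a global statement that forces $H$ to be a small clique together with a star (for $n$ large), or the comet $\co(n)$ (at $n=12$), or else embeds $H$ into a $1$- or $2$-extremal $3$-graph and thereby disqualifies it. The threshold sizes $n\in\{11,12,13,14,15\}$, where several candidates have nearly the same number of edges and where $\co(n)$ and $2K_6\cup K_1$ drop out of the competition by being $2$-extremal, are the most delicate point, and they are what make the detailed use of the conditional Tur\'an-number lemmas necessarily lengthy.
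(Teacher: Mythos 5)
First, note that Theorem \ref{ex3} is quoted in this paper from \cite{JPRr} and \cite{PR}; no proof of it appears here, so your attempt can only be measured against the machinery those works (and Section \ref{turan}) use. Your skeleton is the right one and matches that machinery: the $M$-free case is correctly disposed of by Hilton--Milner (Theorem \ref{ntif}), giving $|H|\le 3n-8$ with equality only for $G_1(n),G_2(n)$ once $H\nsubseteq S_n$, and the split of the $M$-containing case according to whether $C\subseteq H$ is exactly how the conditional Tur\'an numbers $\ex(n;\{P,C\}|M)$ and $\ex(n;P|\{C,M\})$ are meant to be deployed. The genuine gap is that everything after this split is asserted rather than proved, and those assertions are precisely the content of the theorem. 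In the $C$-free subcase you invoke an unproved claim that the structure ``forces all but a bounded number of edges through a single vertex''; what is actually available is $\ex(n;\{P,C\}|M)=4+\binom{n-4}{2}$ with extremal graph $\co(n)$ (Lemma \ref{PCM}), and since $\co(n)$ is $2$-extremal only for $n\ge13$, you need a \emph{second-order} version of that conditional number (the maximum after excluding subgraphs of $\co(n)$), established separately for each $n$ in the delicate range $11\le n\le 15$, together with uniqueness of $K_4\cup S_{n-4}$ -- none of which is supplied. In the $C$-containing subcase your ``$K_6$-core'' claim is simply false for connected $H$: a connected $P$-free $3$-graph containing both $C$ and $M$ has at most $n+5$ edges (cf.\ Lemma \ref{spojnyzCM}), far below $20+\binom{n-7}{2}$, so the bound $K_6\cup S_{n-6}$ can only emerge from a connected/disconnected component analysis (with exclusions of subgraphs of $K_6\cup K_{n-6}$, $2K_6\cup K_1$, etc.) that you never set up. The equality analysis identifying $\Ex^{(3)}(n;P)$, including the three-way tie at $n=11$ and the uniqueness of $\co(12)$ and of $K_6\cup S_{n-6}$, is likewise left to ``tracking equality throughout''.

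There is also a local logical error in your lower bound: ``disconnected with two nonempty parts, hence does not embed into the connected $\co(n)$'' is not a valid inference, since disconnected $3$-graphs embed into connected hosts routinely (e.g.\ $M\subset \co(n)$). The correct reason that $K_6\cup S_{n-6}$ and $K_4\cup S_{n-4}$ are not sub-$3$-graphs of $\co(n)$ is structural: $\co(n)$ contains no $K_6$ at all, and its only copy of $K_4$ consists of the center and the head, so after removing those four vertices no edges remain to accommodate the star. With that repaired, the lower-bound half of your argument is fine; the upper-bound half, however, currently consists of naming the answers in each regime rather than deriving them, which is exactly the lengthy case analysis carried out in \cite{JPRr} and \cite{PR}.
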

%\noindent In particular, for $n\ge14$, Theorem \ref{ex3} gives an upper bound  on the number of edges in  an $n$-vertex $P$-free 3-graph which is neither a star nor is contained in the comet $\co(n)$.

Surprisingly, as an immediate consequence we obtained also an exact formula for the 4th Tur\'an number for $P$.
Let $K_5^{+t}$ be the 3-graph obtained from $K_5$ by fixing two of its vertices, say $a,b$, and adding $t$ more vertices $v_1, v_2, \dots, v_t$ and $t$ edges $\{a,b,v_i\}$, $i=1,2,\dots, t$.

\begin{theorem}[\cite{PR}]\label{ex4}
	$$
	\mathrm{ex}^{(4)}(n;P)=
	\left\{\begin{array}{ll}
	12 &\textrm{and} \quad \Ex^{(4)}(n;P)=\{G_3(n),K_5^{+2}\}\hskip 2.6cm\textrm{for }n=7,\\
	2n-2 &\textrm{and} \quad \Ex^{(4)}(n;P)=\{G_3(n)\}\hskip 2,82cm\textrm{for }8\le n\le 9,\\
	20 &\textrm{and} \quad \Ex^{(4)}(n;P)=\{K_5\cup K_5\}\hskip 2,95cm\textrm{for }n=10,\\
	20 &\textrm{and} \quad \Ex^{(4)}(n;P)=\{G_3(n)\}\hskip 3.4cm\textrm{for }n=11,\\
	28 &\textrm{and} \quad \Ex^{(4)}(n;P)=\{G_1(n),G_2(n)\}\hskip 2.15 cm\textrm{for } n= 12,\\
	33 &\textrm{and} \quad \Ex^{(4)}(n;P)=\{K_6\cup G_1(n),K_6\cup G_2(n)\}\hskip 0.2cm\textrm{for }n=13,\\
	40 &\textrm{and} \quad \Ex^{(4)}(n;P)=\{2K_6\cup 2K_1,K_4\cup S_{10}\}\hskip 0.65cm\textrm{for }n=14,\\
	48  &\textrm{and} \quad \Ex^{(4)}(n;P)=\{\ro(n),K_6\cup S_{9}\}\,\hskip 1.74cm\textrm{for }n=15,\\
	3 + \binom{n-5}{2} &\textrm{and} \quad \Ex^{(4)}(n;P)=\{\ro(n)\}\,\hskip 3.35cm\textrm{for }n\ge 16.
	\end{array}\right.
	$$
\end{theorem}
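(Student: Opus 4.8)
The plan is not to prove Theorem~\ref{ex4} from scratch, but to read it off from the structural analysis underlying Theorem~\ref{ex3}. Two observations organise everything. First, by \eqref{decrease} we have $\ex^{(4)}(n;P)\le \ex^{(3)}(n;P)-1$ for free, and for $n=7$, for $n\in\{14,15\}$, and for all $n\ge 16$ the value claimed in the theorem \emph{equals} $\ex^{(3)}(n;P)-1$, so for those $n$ only the extremal family remains at stake. Second, since $\ex^{(4)}(n;P)<\ex^{(3)}(n;P)$, \emph{every} $P$-free $n$-vertex $3$-graph with at least $\ex^{(3)}(n;P)$ edges is a sub-$3$-graph of some $1$-, $2$-, or $3$-extremal $3$-graph for $P$; hence the only edge-counts a $4$-extremal-type $3$-graph can realise lie in $[\ex^{(4)}(n;P),\ex^{(3)}(n;P)-1]$, a single point for the $n$ just named and a short interval for $8\le n\le 13$. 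What is genuinely needed is a \emph{classification lemma}: for every $n$, every $P$-free $n$-vertex $3$-graph $H$ with at least the claimed value of $\ex^{(4)}(n;P)$ edges is either a sub-$3$-graph of one of the $3$-graphs in $\Ex^{(1)}(n;P)\cup\Ex^{(2)}(n;P)\cup\Ex^{(3)}(n;P)$ exhibited in Theorems~\ref{ex1}, \ref{ex2}, \ref{ex3}, or is isomorphic to one of the $3$-graphs named in the claimed $\Ex^{(4)}(n;P)$. This lemma is to be extracted from, or re-proved alongside, the case analyses of \cite{JPRr} and \cite{PR}.

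Granting the lemma, both directions are routine. For the upper bound, if $H$ is $P$-free with $|E(H)|>\ex^{(4)}(n;P)$ and $H$ embeds into no $(\le3)$-extremal $3$-graph for $P$, then either $|E(H)|\ge\ex^{(3)}(n;P)$, which contradicts \eqref{decrease}, or $\ex^{(4)}(n;P)<|E(H)|<\ex^{(3)}(n;P)$, and the lemma forces $H$ into a $(\le3)$-extremal $3$-graph (it cannot be one of the listed $4$-extremal graphs, which have strictly fewer edges) --- again a contradiction. Thus $\ex^{(4)}(n;P)$ is at most the claimed value. For the lower bound and the extremal family one exhibits, range by range, the candidate $3$-graphs $G_3(n)$, $K_5^{+2}$, $K_5\cup K_5$, $G_1(n)$, $G_2(n)$, the relevant $K_6\cup G_i$, $2K_6\cup 2K_1$, $K_4\cup S_{n-4}$, $\ro(n)$, $K_6\cup S_9$, and verifies for each that (i) it has exactly the stated number of edges, (ii) it is $P$-free, and (iii) it is not a sub-$3$-graph of any $(\le3)$-extremal $3$-graph for $P$. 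A last appeal to the classification lemma, applied to a $P$-free $H$ with exactly $\ex^{(4)}(n;P)$ edges and lying in no $(\le3)$-extremal $3$-graph, then shows these are the only $4$-extremal $3$-graphs, so $\Ex^{(4)}(n;P)$ is exactly the stated family.

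The checks (i)--(iii) need only a small toolbox. Point (i) is a count. For (ii) one uses that $P$ is connected with $7$ vertices, so no copy of $P$ fits inside $K_4$, $K_5$ or $K_6$, and in a vertex-disjoint union any copy of $P$ lies entirely in one part; this, with $G_1,G_2\not\supseteq P$ (noted in the excerpt) and an analogous direct inspection of $G_3(n)$ and $\ro(n)$, gives $P$-freeness. For (iii) the obstructions are elementary: a star has no two disjoint edges, so a candidate containing a $2$-matching is not a sub-$3$-graph of $S_n$; a connected $n$-vertex $3$-graph is not a sub-$3$-graph of a disconnected $n$-vertex $3$-graph, which handles e.g.\ $\ro(n)\nsubseteq K_4\cup S_{n-4}$; component-size counting settles the clique-union candidates against the clique-union extremal graphs (e.g.\ $K_5\cup K_5\nsubseteq K_6\cup K_4$); clique sizes give that $G_1(n),G_2(n)$ contain no $K_5$ and $K_4\cup S_{n-4}$ contains no $K_6$ (hence $K_5\cup K_5\nsubseteq G_i(n)$ and $K_6\cup S_9\nsubseteq K_4\cup S_{11}$); embedding into $\co(n)$ requires a vertex whose deletion leaves at most one edge, which no candidate has; and for $G_3(n)$ one may simply invoke Theorem~\ref{ntntif}, which already gives $G_3(n)\nsubseteq S_n,G_1(n),G_2(n)$.

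The whole weight of the argument sits on the classification lemma --- pushing the structural description of dense $P$-free $3$-graphs from the threshold $\ex^{(3)}(n;P)$ down to $\ex^{(4)}(n;P)$. For $n\ge 16$ the gap is a single edge and there is essentially nothing to do beyond the constructions; the effort concentrates on $8\le n\le 15$, where the extremal $3$-graphs change character from unions of cliques to star-based $3$-graphs, so one must confirm that none of the few edge-counts strictly between $\ex^{(4)}(n;P)$ and $\ex^{(3)}(n;P)$ is realised by a $P$-free $3$-graph escaping every $(\le3)$-extremal $3$-graph, and must pin down the short list of $4$-extremal graphs that do occur. The case $n=7$ is a genuine base case, to be handled by a finite search over $7$-vertex $3$-graphs with $12$ edges or by the structure theory for $P$; it, together with $n\in\{8,\dots,13\}$, is where the real work lies.
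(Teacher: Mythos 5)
This theorem is not proved in the present paper at all: it is imported from \cite{PR}, and the only in-paper commentary is the remark that the formula for $\mathrm{ex}^{(4)}(n;P)$ was obtained ``as an immediate consequence'' of the structural analysis behind Theorem~\ref{ex3} in that reference. Your skeleton is consistent with that route (lower bound by exhibiting the listed candidates and checking they avoid all $1$-, $2$-, $3$-extremal hosts; upper bound and uniqueness from a structural description of dense $P$-free $3$-graphs), and your elementary verifications (i)--(iii) for the candidate graphs are sound as far as they go.

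The genuine gap is that everything is made to rest on your ``classification lemma'' --- every $P$-free $n$-vertex $3$-graph with at least the claimed number of edges either embeds into a graph of $\Ex^{(1)}\cup\Ex^{(2)}\cup\Ex^{(3)}(n;P)$ or is one of the listed $4$-extremal graphs --- and you never prove it; you only say it ``is to be extracted from, or re-proved alongside'' the case analyses of \cite{JPRr} and \cite{PR}. That lemma \emph{is} the theorem: it does not follow from the statements of Theorems~\ref{ex1}--\ref{ex3} together with \eqref{decrease}. For example, for $n=10$ one must rule out a $P$-free $3$-graph with $21$ edges contained in none of $S_{10}$, $K_6\cup K_4$, $G_1(10)$, $G_2(10)$, and for $8\le n\le 13$ a whole interval of edge-counts must be excluded; nothing in the quoted results does this without redoing the structural case analysis (connected versus disconnected, $M$-containing versus intersecting, etc.) at the lower threshold. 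Even for the values $n=7$, $14$, $15$ and $n\ge16$, where the claimed value equals $\mathrm{ex}^{(3)}(n;P)-1$ and the numerical upper bound is free from \eqref{decrease}, determining $\Ex^{(4)}(n;P)$ exactly (e.g.\ that $\ro(n)$ is the \emph{unique} extremal graph for $n\ge16$, or that $K_5^{+2}$ joins $G_3(7)$ at $n=7$) again requires precisely the unproved classification. So what you have is a correct reduction of the theorem to its own hard core, not a proof; the substantive work for $7\le n\le 15$, which you yourself flag as ``where the real work lies,'' is missing.
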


The main Tur\'an-type result of this paper provides a complete formula for the fifth order Tur\'an number for $P$.

\begin{theorem}\label{ex5}
	
	\footnotesize
	$$
	\mathrm{ex}^{(5)}(n;P)=
	\left\{\begin{array}{ll}
	11 \quad \textrm{and} \quad \Ex^{(5)}(n;P)=\Ex^{(4)}(7;M)&\textrm{for }n=7,\\
	13 \quad\textrm{and} \quad \Ex^{(5)}(n;P)=\{K_5^{+3}\}&\textrm{for }n=8,\\
	14 \quad\textrm{and} \quad \Ex^{(5)}(n;P)=\{K_5^{+4},K_5\cup K_4\}\cup\Ex(9;\{P,C\}|M)&\textrm{for }n=9,\\
	19 \quad\textrm{and} \quad \Ex^{(5)}(n;P)=\{\co(10)\}&\textrm{for }n=10,\\
	19 \quad\textrm{and} \quad \Ex^{(5)}(n;P)=\{K_4\cup S_7\}&\textrm{for }n=11,\\
	25 \quad\textrm{and} \quad \Ex^{(5)}(n;P)=\{K_5\cup S_7, K_4\cup S_8\}&\textrm{for }n=12,\\
	32 \quad\textrm{and} \quad \Ex^{(5)}(n;P)=\{K_4\cup S_9, K_6\cup K_5^{+2}, K_6\cup G_3(7)\}&\textrm{for }n=13,\\
	39 \quad\textrm{and} \quad \Ex^{(5)}(n;P)=\{\ro(14)\}&\textrm{for } n= 14,\\
	46 \quad\textrm{and} \quad \Ex^{(5)}(n;P)=\{K_5 \cup S_{10}\}&\textrm{for }n=15,\\
	56 \quad\textrm{and} \quad \Ex^{(5)}(n;P)=\{K_6\cup S_{10}\}&\textrm{for }n=16,\\
	65 \quad \textrm{and} \quad \Ex^{(5)}(n;P)=\{K_5\cup S_{12},K_6\cup S_{11}\}&\textrm{for }n=17,\\
	10 + \binom{n-6}{2} \quad\textrm{and} \quad \Ex^{(5)}(n;P)=\{K_5\cup S_{n-5}\}&\textrm{for }n\ge 18.
	\end{array}\right.
	$$
	\normalsize
\end{theorem}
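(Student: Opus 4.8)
The plan is to handle the lower range $7\le n\le 17$ by a mixture of direct case analysis and appeals to the conditional Tur\'an numbers (with respect to $P$, $C$, and $M$) and the already-known numbers $\ex^{(i)}(n;P)$ and $\ex^{(i)}(n;M)$ quoted above, and then to treat the generic range $n\ge 18$ by a clean structural argument. In each case the upper bound and the determination of the extremal family go hand in hand: one assumes $H$ is $P$-free with $|E(H)|\ge \ex^{(5)}(n;P)$ and $H\not\subseteq H'$ for every $H'$ that is $i$-extremal, $i\le 4$, and one squeezes out the structure. The standard first move, which I would use throughout, is to locate a vertex $v$ of high degree (this is forced once $|E(H)|$ is close to $\binom{n-1}{2}$), look at the link graph of $v$ and at the $3$-graph $H-v$, and use the fact that a path $P$ cannot pass through $v$ too many times; this reduces the problem to understanding $P$-free (or $\{P,C\}$-free, or $M$-free) configurations on fewer vertices, where the earlier theorems apply. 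The recurring extremal shapes --- $K_5\cup S_{n-5}$, $K_6\cup S_{n-6}$, $K_4\cup S_{n-4}$, the rocket $\ro(n)$, the comet $\co(n)$, and small sporadic graphs like $K_5^{+t}$ and $K_6\cup G_3(7)$ --- should each be checked to be $P$-free, to have the claimed edge count, and to not embed in any lower-order extremal graph; that bookkeeping is routine but must be done for every listed $n$.

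For the main range $n\ge 18$ I would argue as follows. Let $H$ be $5$-extremal. Because $\ex^{(5)}(n;P)=10+\binom{n-6}{2}$ is still quadratic in $n$, $H$ must contain a vertex $v$ of large degree; in fact one shows $\Delta(H)\ge n-6$ or so. Splitting off the "dense'' part spanned by the neighbourhood of the top vertices and the "star'' part, one gets that $H$ is essentially $D\cup S$, a disjoint union of a bounded-size dense $3$-graph $D$ on the non-center vertices together with an (almost) full star on the rest. The constraint that $H$ avoid all of $\Ex^{(1)},\dots,\Ex^{(4)}$ (in particular $H\not\subseteq \ro(n)$ and $H\not\subseteq K_4\cup S_{n-4}$) forces $D$ to be strictly larger than $K_4$ and strictly smaller than $K_6$, hence $D=K_5$, giving $H=K_5\cup S_{n-5}$ with exactly $10+\binom{n-6}{2}$ edges. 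The key sub-lemma here is a stability statement: a $P$-free $3$-graph with almost $\binom{n-1}{2}$ edges that is not a sub-3-graph of a star-plus-$K_4$ (etc.) must look like $K_t\cup S_{n-t}$ for some small $t$; this is exactly the type of statement proved in \cite{PR} for the fourth order and should carry over with the conditional Tur\'an lemmas for $P$, $C$, $M$ doing the local work.

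The main obstacle, as in the earlier papers in this series, will be the small cases, especially $n=9$, $n=13$, and the pair $n=14,15$. For $n=9$ the extremal family includes $\Ex(9;\{P,C\}\mid M)$ --- a conditional extremal family --- so one genuinely has to understand $M$-free $3$-graphs on $9$ vertices that in addition avoid $P$ and $C$, and show nothing with $15$ edges beats them; this requires the conditional Tur\'an machinery quoted from \cite{JPRr,P} rather than a soft degree argument. Likewise $n=13$ produces three non-isomorphic extremal graphs, one of which is $K_6\cup G_3(7)$, so the analysis of $H-K_6$ must invoke Theorem~\ref{ntntif} (the third-order Tur\'an number for $M$) on $7$ vertices. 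For $14\le n\le 17$ the transition between the $\ro(n)$-type and $K_5\cup S_{n-5}$-type optima is delicate and the edge counts have to be compared exactly. I would organize the write-up so that a single "peeling'' lemma (remove a top-degree vertex, or remove a $K_6$/$K_5$ clique, and recurse) is stated once and then invoked for each $n$, with the genuinely ad hoc work confined to $n\in\{9,13,14,15\}$.
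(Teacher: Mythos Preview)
Your outline is a plan rather than a proof, and its central load-bearing step is precisely the part that the paper has to work hardest on. You propose to show for $n\ge 18$ that a $P$-free $H$ with enough edges must be ``essentially $D\cup S$'' via a high-degree-vertex stability statement that ``should carry over'' from \cite{PR}. But that stability statement is the whole theorem in the connected case: a connected $P$-free $H$ that is not a sub-$3$-graph of $S_n$, $\co(n)$, or $\ro(n)$ need not have any vertex of unusually high degree, and nothing you say explains how the link-graph argument pins down $D=K_5$ rather than, say, a connected $\{P,C\}$-free $3$-graph containing $M$ with close to $\binom{n-6}{2}+10$ edges. The paper does not obtain this by peeling a top-degree vertex; it first splits on connectivity, and in the connected case further splits on whether $M\subset H$ and $C\subset H$. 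The hard subcase (connected, $\{P,C\}$-free, $M\subset H$, $H\nsubseteq\co(n)$, $H\nsubseteq\ro(n)$) is then governed by a dedicated structural lemma (Lemma~\ref{con}) whose proof fixes a copy of $P_2\cup K_3$, partitions $V(H)$ into the five vertices $U$ of the $P_2$ and $W=W_0\cup W_1$, and does a careful edge count across the pieces $H[U]$, $H[W]$, $H_0$, $H_1$ using the bounds~(\ref{r1})--(\ref{r7}) and Fact~\ref{UW0}. None of that machinery is visible in your sketch.

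For the disconnected case your ``remove a $K_5$/$K_6$ clique and recurse'' idea is closer in spirit to what the paper does, but the paper's organization is different and tighter: it cases on the \emph{smallest} component size $m\in\{1,3,4,5,6,7,\ge 8\}$ and, for each $m$, bounds $|H|$ by $\ex^{(1)}(m;P)+\ex^{(j)}(n-m;P)$ for the appropriate $j$, using induction on $n$ together with Theorems~\ref{ex1}--\ref{ex4} and the already-proved small cases of Theorem~\ref{ex5}. Your peeling-by-degree would not directly see the isolated-vertex case $m=1$ or the $m=3$ and $m=7$ cases, and those matter for several of the small $n$. Finally, for $n=9$ you correctly flag the conditional family $\Ex(9;\{P,C\}\mid M)$, but note that this arises in the paper's connected analysis (via Lemma~\ref{PCM}), not in an $M$-free analysis; your phrasing ``$M$-free $3$-graphs on $9$ vertices that in addition avoid $P$ and $C$'' has the roles reversed.
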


To determine Tur\'an numbers, it is sometimes useful to rely on Theorem \ref{ntif} and divide all 3-graphs into those which contain $M$ and those which do not.
To this end, it is convenient to define conditional Tur\'an numbers (see \cite{JPR, JPRr}).
     For a family of $3$-graphs $\mathcal F$, an $\mathcal F$-free $3$-graph $ G$, and an integer $n\ge |V(G)|$, the  \textit{conditional Tur\'an number} is defined as
     \begin{eqnarray*}
        \ex(n;\mathcal F|G)=\max\{|E(H)|:|V(H)|=n,\;
      \mbox{$H$ is
    $\mathcal F$-free, and } H\supseteq G \}
     \end{eqnarray*}
     Every $n$-vertex $\mathcal F$-free $3$-graph with $\ex(n;\mathcal F|G)$ edges and such
     that $H\supseteq G$  is called \emph{$G$-extremal for $\mathcal F$}.
     We denote by $\Ex(n;\mathcal F| G)$ the family of all  $n$-vertex $3$-graphs which are $ G$-extremal for $\mathcal F$.
 (If $\mathcal F=\{F\}$, we simply write $F$ instead of $\{F\}$.)

To illustrate the above mentioned technique, observe that for $n\ge7$
$$\mathrm{ex}^{(2)}(n;P)=\max\{ \ex(n; P|M), \mathrm{ex}^{(2)}(n;M)\}\overset{Thm\ref{ntif}
}{=}\max\{ \ex(n; P|M), 3n-8\}=\ex(n; P|M),$$
the last equality holding for  sufficiently large $n$ (see \cite{JPRr} for details).

In the  proof of Theorem \ref{ex5} we will use the following five lemmas, all proved in \cite{JPRr} and \cite{P}. For the first two we need one more piece of notation.
 If, in the above definition,  we restrict ourselves to connected $3$-graphs only (connected in the weakest, obvious sense) then the corresponding conditional Tur\'an number and the extremal family are denoted by
$ \ex_{conn}(n;\mathcal F|G)$ and  $\Ex_{conn}(n;\mathcal F| G)$, respectively.

\begin{lemma}[\cite{JPRr}]\label{spojny} For $n\ge7$,
    $$ \ex_{conn}(n; P|C)=3n-8\mbox{ and }\Ex_{conn}(n;P| C)=\{G_1(n),G_2(n)\}.$$
\end{lemma}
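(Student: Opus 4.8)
The statement to prove is Lemma~\ref{spojny}: for $n\ge 7$, among all \emph{connected} $P$-free $3$-graphs on $n$ vertices that contain a triangle $C$, the maximum number of edges is $3n-8$, attained exactly by $G_1(n)$ and $G_2(n)$. The plan is to fix a copy of the triangle $C$ on vertices $a,b,c,d,e,f$ with edges $\{a,b,c\},\{c,d,e\},\{e,f,a\}$ inside a connected $P$-free $3$-graph $H$ on $n$ vertices, and to bound the contribution of edges according to how they meet $V(C)$. The key structural input is that $P$ is a loose path of length three, so once we have the triangle fixed, any ``long reach'' away from the triangle immediately produces a copy of $P$: concretely, if some edge $h$ of $H$ meets $V(C)$ in exactly one vertex, then the other two vertices of $h$ lie outside $V(C)$, and walking around the triangle from the opposite side gives a $P$; similarly two disjoint edges hanging off the triangle, or an edge disjoint from $V(C)$ reachable from it (which exists by connectedness), yield $P$. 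So the heavy lifting is a careful case analysis on the ``link'' of $V(C)$.

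\textbf{Key steps.} First I would record the trivial upper bound coming from edges contained in $V(C)$ (at most $\binom 63=20$, but $P$-freeness and the presence of $C$ cut this down substantially) and set $W=V(H)\setminus V(C)$, $|W|=n-6$. Second, I would show that no edge of $H$ is contained in $W$: such an edge is disjoint from $C$, and connectedness together with the triangle would let me route a loose path of length three, contradiction; this is where one uses that the triangle has three edges arranged cyclically so that two ``ends'' are always available. Third, I would bound edges meeting $W$ in exactly two vertices: if $\{x,y,w\}$ and $\{x',y',w'\}$ are two such edges with $\{w,w'\}\cap V(C)=\{w,w'\}$ and the pairs in $W$ disjoint, one gets $P$ unless all these pairs share a common vertex of $W$; this forces every edge meeting $W$ twice to pass through one fixed vertex $v\in W$, and then $P$-freeness inside the resulting near-star limits how it attaches to $V(C)$. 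Fourth, I would bound edges meeting $W$ in exactly one vertex: each contributes a pair inside $V(C)$, and the $P$-free plus $C$-containing condition restricts which pairs in $\binom{V(C)}{2}$ can be ``extended'' into $W$ and how many times. Putting the counts together should give $|E(H)|\le 3n-8$, with equality forcing exactly the structure of $G_1(n)$ or $G_2(n)$; I would close by verifying that both $G_1(n)$ and $G_2(n)$ are connected, $P$-free, contain $C$, and have $3n-8$ edges — all of which is already noted in the excerpt (``$C\subset G_i(n)\not\supset P$'' and $|G_i(n)|=3n-8$), so only connectedness needs a line.

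\textbf{Main obstacle.} The genuinely delicate part is the combined case analysis in steps three and four: one must simultaneously control edges that meet $W$ in two vertices (which want to form a star centered at some $v\in W$) and edges that meet $W$ in one vertex (which deposit pairs onto $V(C)$), and argue that these two regimes cannot \emph{both} be large, because mixing a pair-on-$V(C)$ edge with a star-in-$W$ edge again creates a loose path $P$ using the triangle as a connector. Disentangling exactly which configurations survive — and checking that the surviving extremal configurations are precisely $G_1(n)$ and $G_2(n)$ rather than some sporadic small-$n$ exception — is where the real work lies; I expect the bookkeeping to require splitting on whether the attachment vertex $v\in W$ of the star sends edges to one, two, or three vertices of $V(C)$, mirroring the Hilton--Milner dichotomy that already appears in Theorem~\ref{ntif}. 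The small cases $n=7,8$ should be handled separately or absorbed, since there $W$ is tiny and the generic routing arguments degenerate.
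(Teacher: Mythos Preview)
The present paper does not prove Lemma~\ref{spojny}; it is quoted from~\cite{JPRr}. The only hints given here about that proof are that the extremal families coincide with those of Theorem~\ref{ntif} and that Lemma~\ref{spojnyzCM} (the sub-case $M\subset H$, which forces $H\subseteq K_5^{+(n-5)}$) falls out of it. This strongly suggests the argument in~\cite{JPRr} splits on whether $M\subset H$: if $M\not\subset H$, then $H$ is an intersecting family not contained in $S_n$ (since $S_n$ is $C$-free), so Hilton--Milner (Theorem~\ref{ntif}) gives $|H|\le 3n-8$ with equality exactly for $G_1(n)$ or $G_2(n)$; if $M\subset H$, one shows $H\subseteq K_5^{+(n-5)}$, whence $|H|\le n+5<3n-8$ for $n\ge 7$. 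Your direct approach --- fixing $V(C)$ and classifying edges by their trace on it --- is viable but re-derives the Hilton--Milner dichotomy from scratch instead of invoking it, which is why your ``main obstacle'' paragraph ends up mirroring that dichotomy.

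There is also an internal inconsistency you should fix. In your opening paragraph you correctly note that a \emph{single} edge $h$ with $|h\cap V(C)|=1$ (equivalently $|h\cap W|=2$) already creates a copy of $P$ together with two edges of the fixed triangle; hence no such edge exists. But your ``third step'' then treats these same edges as though they might survive and need to be forced through a common vertex of $W$. Once you use the observation you already made, step three is vacuous, step two (no edge inside $W$) follows from connectedness in one line, and every edge of $H$ meets $V(C)$ in at least two vertices; all the content then sits in step four together with the count on $H[V(C)]$.
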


Lemma  \ref{spojny} as stated in \cite{JPRr} does not provide family $\Ex_{conn}(n;P| C)$. However, it is clear form its proof that the $C$-extremal 3-graphs are the same as in Theorem \ref{ntif}. We will need also another lemma, which is not stated explicitly in \cite{JPRr}, but it immediate results form the proof of the previous one. 

\begin{lemma}[\cite{JPRr}]\label{spojnyzCM} For $n\ge7$,
	$$ \ex_{conn}(n; P|\{C,M\})=n+5\mbox{ and }\Ex_{conn}(n;P|\{ C,M\})=\{K_5^{+(n-5)}\}.$$
	Moreover, if $H$ is $n$-vertex connected $P$-free 3-graph such that $C\subset H$ and $M\subset H$, then $H\subseteq K_5^{+(n-5)}$
\end{lemma}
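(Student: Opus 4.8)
The plan is to establish the last (``moreover'') assertion first, namely that every connected $P$-free $3$-graph $H$ on $n\ge 7$ vertices with $C\subseteq H$ and $M\subseteq H$ satisfies $H\subseteq K_5^{+(n-5)}$; the rest then follows immediately. Indeed $|E(K_5^{+(n-5)})|=\binom{5}{3}+(n-5)=n+5$, and $K_5^{+(n-5)}$ is connected, contains $C$ and $M$ (writing $a,b$ for the special pair, $c,d,e$ for the other clique vertices and $v_i$ for the pendants: $\{c,d,e\}$ and $\{a,b,v_1\}$ form $M$, while $\{a,b,v_1\},\{b,c,d\},\{d,a,e\}$ form $C$), and is $P$-free, because any edge through a pendant contains the pair $\{a,b\}$, so a copy of $P$ in $K_5^{+(n-5)}$ could use at most one pendant (two pendant edges would share $\{a,b\}$, impossible in $P$) and hence would need at least $6$ of its $7$ vertices in the $5$-element clique. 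Thus $\ex_{conn}(n;P|\{C,M\})=n+5$ and, comparing edge counts in $H\subseteq K_5^{+(n-5)}$, the only extremal graph is $K_5^{+(n-5)}$, i.e.\ $\Ex_{conn}(n;P|\{C,M\})=\{K_5^{+(n-5)}\}$. As the authors note, this is also implicit in the proof of Lemma~\ref{spojny} in \cite{JPRr}, which at one point splits a connected $P$-free $H\supseteq C$ according to whether it contains two disjoint edges; the branch $M\subseteq H$ is exactly the claim above. I sketch a direct argument.

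Fix a copy of $C$ in $H$ on a $6$-set $W=\{a,b,c,d,e,f\}$ with edges $\{a,b,c\},\{c,d,e\},\{e,f,a\}$, and a copy $M=\{e_1,e_2\}$ in $H$. The first step (part of the proof of Lemma~\ref{spojny}) is that $P$-freeness forces $|g\cap W|\in\{0,2,3\}$ for every edge $g$: since no two vertices of $C$ lie in exactly the same set of $C$-edges, every pair inside $W$ is \emph{separated} by some $C$-edge (one containing exactly one vertex of the pair), so an edge $g$ with $|g\cap W|=1$ can be attached to an appropriate loose $2$-path of $C$ to produce a copy of $P$. The second step uses connectivity to exclude edges disjoint from $W$: for such a $g$, a walk from $g$ to $W$ has a first edge $h$ meeting $W$, necessarily with $|h\cap W|=2$ (step one rules out $1$, and $3$ would force the previous edge of the walk, which is disjoint from $W$, to miss $h$); writing $h=\{p,q,w\}$ with $p,q\in W$, $w\notin W$, the previous edge $h^-$ is disjoint from $W$ and meets $h$ exactly in $w$, so a $C$-edge separating $\{p,q\}$ together with $h$ and $h^-$ is a copy of $P$. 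Hence $V(H)=W\cup U$ with $U\ne\emptyset$ (as $n\ge7$), every edge of $H$ is \emph{internal} ($\subseteq W$) or a \emph{bridge} $\{p,q,u\}$ with $\{p,q\}\subseteq W$ and $u\in U$, and at least one bridge exists.

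The crux is to show, using the hypothesis $M\subseteq H$, that all bridges share one pair $\{p,q\}\subseteq W$, that $\{p,q\}$ extends to an edge $\{p,q,v\}$ of $C$ with $\deg_H(v)=1$, and that every other internal edge avoids $v$; then, with $S:=W\setminus\{v\}$ as clique and $\{p,q\}$ as special pair, every edge of $H$ is contained in $S$ or contains $\{p,q\}$, the pendant set $\{v\}\cup U$ has size $n-5$, and so $H\subseteq K_5^{+(n-5)}$. The role of $M$ is this: since $H$ is $P$-free and $e_1\cap e_2=\emptyset$, no edge of $H$ meets each of $e_1,e_2$ in exactly one vertex (such an edge, together with $e_1$ and $e_2$, is a copy of $P$). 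By the previous paragraph each of $e_1,e_2$ is internal or a bridge and they are disjoint, so only a few positions for the pair $e_1,e_2$ relative to $W$ are possible; going through these and combining the constraint just stated with $C\subseteq H$ eliminates every configuration with two different bridge pairs and every superfluous internal edge---in particular it is this that rules out the $M$-free extremal graphs $G_1(n),G_2(n)$ of Lemma~\ref{spojny}, in whose copies of $C$ the bridges sit on three distinct pairs. I expect this last step to be the main obstacle: each forbidden configuration is killed by exhibiting an explicit copy of $P$, but the case analysis (over the placement of $e_1$, $e_2$, the bridge pairs and the internal edges among the six vertices of $C$) is lengthy, and the recurring delicate point is verifying that the seven vertices of each exhibited copy of $P$ are pairwise distinct.
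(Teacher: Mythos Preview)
The paper does not prove this lemma itself; it is quoted from \cite{JPRr} with the remark that it ``is not stated explicitly in \cite{JPRr}, but it immediately results from the proof'' of Lemma~\ref{spojny} there. Your proposal acknowledges this and then goes beyond the paper by sketching the argument: fix a copy of $C$ on a $6$-set $W$, show every edge meets $W$ in exactly two or three vertices, then use $M$ to pin down the bridges. This is the correct skeleton and is exactly the $M$-branch of the proof of Lemma~\ref{spojny} in \cite{JPRr}.

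Two minor remarks on the sketch. First, the pair-separation property of the $C$-edges that you invoke is what you use (correctly) in the ``$|g\cap W|=0$'' step; for ``$|g\cap W|\ne 1$'' the more direct argument is simply that every vertex of $W$ is an endpoint of some loose $2$-path inside~$C$. Second, your phrasing of the crux---that the common bridge pair $\{p,q\}$ ``extends to an edge $\{p,q,v\}$ of $C$''---is not automatic: with only the three $C$-edges present, a pair such as $\{a,d\}$ (in your labelling) is an admissible bridge pair lying in no $C$-edge, so this conclusion genuinely requires the $M$-hypothesis to rule such pairs out first. You explicitly flag the final case analysis as lengthy and unperformed, so the proposal is best read as a sound plan rather than a finished proof---which is already more than the paper itself supplies.
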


\begin{lemma}[\cite{JPRr}]\label{PCM}
    $$
    \mathrm{ex}(n;\{P,C\}|M)=
    \left\{\begin{array}{ll}
    2n-4 &\qquad\qquad\qquad\qquad\qquad\qquad\qquad\;\;\;\;\textrm{for } 6\le n\le 9,\\
    20 &\qquad\qquad\qquad\qquad\qquad\qquad\qquad\qquad\;\;\textrm{for }n=10,\\
    4 + \binom{n-4}{2} &\textrm{and} \quad \Ex (n;\{P,C\}|M)=\{\co(n)\}\quad\quad\textrm{for } n\ge 11.
    \end{array}\right.
    $$
\end{lemma}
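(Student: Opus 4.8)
The lemma asserts two things: the value of $\ex(n;\{P,C\}|M)$ for every $n\ge 6$, and, for $n\ge 11$, that $\co(n)$ is the unique extremal configuration. For the lower bounds I would simply exhibit $\{P,C\}$-free $3$-graphs containing a copy of $M$ of the required size: the comet $\co(n)$ for $n\ge 11$ — its head $\{p,q,r\}$ together with any star edge forms a copy of $M$, and $\co(n)$ is $P$-free and $C$-free because every edge of it either contains the centre $v$ or equals $\{p,q,r\}$ while the link of $v$ equals $K_3\cup K_{n-4}$; the disjoint union $K_5\cup K_5$ for $n=10$; the disjoint union $K_5\cup K_4$ for $n=9$; and suitable small examples for $6\le n\le 8$. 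In the latter three cases neither $P$ nor $C$ can occur, since each component has fewer than $7$, resp.\ $6$, vertices whereas both $P$ and $C$ are connected.

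For the upper bound let $H$ be an $n$-vertex $\{P,C\}$-free $3$-graph containing a copy $\{e_1,e_2\}$ of $M$ and with $|E(H)|$ maximum; I would argue by induction on $n$. If $H$ is disconnected, every component is $\{P,C\}$-free: a component carrying no copy of $M$ has at most $\binom{|V(H_i)|-1}{2}$ edges if $|V(H_i)|\ge 6$ (Theorem \ref{c3}) and at most $\binom{|V(H_i)|}{3}$ edges otherwise, while a component carrying a copy of $M$ has at most $\ex(m;\{P,C\}|M)$ edges with $m<n$, known by induction. A short convexity estimate then shows that a disconnected $H$ never beats $\co(n)$ for $n\ge 11$, and pins down the optimal disconnected graphs as $K_5\cup K_5$ for $n=10$ and $K_5\cup K_4$ for $n=9$. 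Hence for $n\ge 9$ it remains to bound connected $H$.

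Assume $H$ connected and $n\ge 9$, and put $V_0=e_1\cup e_2$. The key input from $P$-freeness is twofold: in a connected $P$-free $3$-graph any two disjoint edges have a common neighbour, i.e.\ some third edge meets both of them (a longer shortest connection would contain a copy of $P$), and every edge meeting both $e_1$ and $e_2$ must lie inside $V_0$ (an edge meeting each of $e_1,e_2$ in one vertex and having its third vertex outside $V_0$ would, with $e_1$ and $e_2$, form a copy of $P$). Propagating these observations to all pairs of disjoint edges and invoking $C$-freeness, I would show that $H$ has a vertex $v$ and a set $W$ with $v\notin W$ and $|W|\le 3$ such that every edge of $H$ either contains $v$ or is contained in $W$, and moreover the link graph of $v$ contains no edge joining $W$ to $V(H)\setminus(W\cup\{v\})$ — such an edge together with an edge inside $W$ would close up a copy of $C$. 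Therefore
$$|E(H)|\ \le\ |E(H[W])|+\binom{|W|}{2}+\binom{n-1-|W|}{2}\ \le\ 4+\binom{n-4}{2},$$
and chasing the inequalities shows that equality forces $|W|=3$, $H[W]$ a single edge and $\mathrm{link}_H(v)=K_3\cup K_{n-4}$, that is $H=\co(n)$. The exceptional range $n\le 8$, where connected graphs can exceed $4+\binom{n-4}{2}$ edges and the extremal graphs are not comets, I would handle by a direct, finite analysis organized around a vertex of maximum degree and its link.

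The main obstacle is the connected case: turning the two local consequences of $P$-freeness into the global statement that almost every edge passes through a single vertex. One must rule out edges lying far from the fixed copy of $M$ and configurations lacking a common dominating vertex (such as a $K_5$-fragment attached to a star), each of which has to be excluded by repeated appeals to $P$- and $C$-freeness; carrying out this case analysis while keeping it finite, and disposing of the genuinely exceptional small values of $n$, is where the bulk of the work lies.
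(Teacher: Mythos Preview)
This lemma is not proved in the present paper: it is quoted from \cite{JPRr} (see the sentence introducing Lemmas~\ref{spojny}--\ref{2MC}), so there is no in-paper proof to compare your proposal against.

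That said, a few remarks on your outline. The lower bounds and the disconnected case are fine. In the connected case your central claim --- that a connected $\{P,C\}$-free $3$-graph containing $M$ admits a vertex $v$ and a set $W$ with $|W|\le 3$ such that every edge contains $v$ or lies in $W$ --- is precisely the whole difficulty, and you leave it at ``I would show''. The two local observations you record (any two disjoint edges are bridged by a single edge, and any such bridge lies inside $e_1\cup e_2$) are correct, but ``propagating'' them to a global dominating-vertex statement is not automatic; one has to exclude, for instance, configurations where several edges through $x$ coexist with edges of type $F^2$ anchored on two different pairs in $e_1\cup e_2$, and this is a genuine case analysis rather than a one-line consequence.

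Judging from the machinery this paper imports from \cite{JPRr} and deploys in Section~\ref{prepa} (the sets $U,W_0,W_1$, the edge classes $F^0,F^1,F^2$, and the counting bounds \eqref{r5}--\eqref{r1}), the original proof most likely does \emph{not} first isolate a dominating vertex and then count, but rather fixes a copy of $P_2\cup K_3$ (via Lemma~\ref{pcppm}), partitions the remaining vertices according to their interaction with it, and bounds each edge class separately. Your approach, if it can be made to work, would be more structural; the trade-off is that the dominating-vertex statement packages all the case analysis into a single assertion whose proof you have not supplied.
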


\begin{lemma}[\cite{JPRr}]\label{pcppm}
    For $n\ge 6$
    $$\ex(n;\{P,C,P_2\cup K_3\}|M)=2n-4,$$
where $P_2$ is a pair of edges sharing one vertex.
\end{lemma}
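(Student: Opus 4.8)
The plan is to prove the two inequalities $\ex(n;\{P,C,P_2\cup K_3\}\mid M)\ge 2n-4$ and $\ex(n;\{P,C,P_2\cup K_3\}\mid M)\le 2n-4$ separately. For the lower bound one exhibits an explicit $n$-vertex $3$-graph $H_0$ that contains a matching $M$, avoids $P$, $C$, and $P_2\cup K_3$, and has exactly $2n-4$ edges; a natural candidate is obtained by taking a $K_4$ on four vertices $\{x_1,x_2,x_3,x_4\}$, together with the "book" consisting of all triples $\{x_1,x_2,v\}$ over the remaining $n-4$ vertices $v$, which is precisely the $3$-graph $G_3$-type / $K_5^{+(n-5)}$-type construction trimmed down; one checks it has $4+(n-4)\cdot 2 = 2n-4$ edges (the $K_4$ contributes $4$ edges and each extra vertex contributes two further edges through the pair $\{x_1,x_2\}$ together with... — the exact bookkeeping and the precise choice of $H_0$ is routine and I would verify it contains $M$, is $P$-free, $C$-free and $P_2\cup K_3$-free, the last because any copy of $K_3$ lives inside the $K_4$ and no disjoint $P_2$ survives).

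For the upper bound I would argue as follows. Let $H$ be an $n$-vertex $\{P,C,P_2\cup K_3\}$-free $3$-graph containing a perfect matching edge pair, i.e.\ $M\subseteq H$. Since $H$ is in particular $\{P,C\}$-free and contains $M$, Lemma \ref{PCM} already gives $|E(H)|\le \ex(n;\{P,C\}\mid M)$, which for $6\le n\le 9$ equals $2n-4$ and we are done; the content of the present lemma is therefore really the range $n\ge 10$, where one must use the extra forbidden configuration $P_2\cup K_3$ to beat the bound $20$ (for $n=10$) and $4+\binom{n-4}{2}$ (for $n\ge 11$) down to $2n-4$. Here the key structural input is that, by Lemma \ref{PCM}, every $\{P,C\}$-free $3$-graph on $n\ge 11$ vertices containing $M$ with more than $2n-4$ edges must be a sub-3-graph of the comet $\co(n)$ (and for $n=10$ one has the analogous near-extremal description). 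So I would take the comet $\co(n)$, whose head is a $K_4$ minus possibly some edges and whose center $w$ spans a large star, and observe that the comet itself contains $P_2\cup K_3$: a triangle $K_3$ sits inside the $K_4$-head on vertices other than the center, while a disjoint copy of $P_2$ (two edges through a common vertex, all six vertices disjoint from the chosen $K_3$) is easily found in the star part once $n$ is large enough. Thus any $P_2\cup K_3$-free subgraph of the comet must be missing enough edges — either it avoids all triangles in the head, or it keeps the head-triangle but then the star through $w$ must be so small that it cannot contain a $P_2$ vertex-disjoint from the head. A short case analysis on "does $H$ contain a $K_3$ or not" then caps $|E(H)|$ at $2n-4$: if $H$ is $K_3$-free, a separate (easier) bound applies; if $H$ contains a $K_3$ on a $6$-set... (here one uses that $P_2\cup K_3$-freeness forces every edge of $H$ to meet that fixed $6$-set in a controlled way), and counting such edges gives at most $2n-4$.

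The main obstacle I expect is the case where $H$ contains a triangle $K_3$ but is not itself close to the comet — one cannot simply quote Lemma \ref{PCM}'s extremal structure because $H$ might have far fewer than $2n-4$... no, rather the delicate point is the opposite: pinning down, for the medium range $n=10$ and the boundary $n=11,12$, exactly how the $P_2\cup K_3$-free condition interacts with a near-extremal $\{P,C\}$-free configuration, since there the comet is not the unique near-extremal object and one has to rule out a handful of other dense $\{P,C\}$-free graphs (e.g.\ $K_6\cup(\text{small})$ type pieces from Theorem \ref{ex2}/\ref{ex3}) by directly locating a $P_2\cup K_3$ or else showing they are too small. I would handle this by: (i) first disposing of all $H$ with at most $2n-4$ edges trivially; (ii) for $H$ with more edges, invoking the $\{P,C\}$-free classification to reduce to finitely many "templates"; (iii) checking $P_2\cup K_3 \subseteq$ template for each, which forces a contradiction, leaving $|E(H)|\le 2n-4$ and completing the proof together with the matching lower-bound construction.
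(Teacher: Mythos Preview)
This lemma is quoted from \cite{JPRr} and is not proved in the present paper, so there is no in-paper proof to compare your plan against; I will comment only on the soundness of the plan itself.

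Your lower-bound sketch is headed in the right direction but the construction is garbled (a single pair $\{x_1,x_2\}$ contributes only one edge per extra vertex, not two). A clean extremal example is the $3$-graph on $\{a,b,c,d,v_1,\dots,v_{n-4}\}$ consisting of the four triples of $K_4$ on $\{a,b,c,d\}$ together with, for each $i$, the two edges $\{a,b,v_i\}$ and $\{c,d,v_i\}$; this has $4+2(n-4)=2n-4$ edges, contains $M$, and avoids $P$, $C$, and $P_2\cup K_3$ (for the last: every edge meets $\{a,b,c,d\}$ in at least two vertices, so no edge can be vertex-disjoint from a $P_2$, which already uses all of $a,b,c,d$).

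The upper-bound plan, however, has a genuine gap. You assert that ``by Lemma~\ref{PCM}, every $\{P,C\}$-free $3$-graph on $n\ge 11$ vertices containing $M$ with more than $2n-4$ edges must be a sub-$3$-graph of the comet $\co(n)$''. Lemma~\ref{PCM} says no such thing: it records the \emph{maximum} value $\ex(n;\{P,C\}\mid M)=4+\binom{n-4}{2}$ and the extremal family $\{\co(n)\}$ for $n\ge 11$, but it gives no structural information whatsoever about $\{P,C\}$-free $3$-graphs whose edge count lies strictly between $2n-4$ and that maximum. There is no reason such a graph must embed in a comet, and your step~(ii) --- ``invoking the $\{P,C\}$-free classification to reduce to finitely many templates'' --- appeals to a classification that is neither stated in this paper nor a standard fact. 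Consequently the plan, as written, does not establish the upper bound for any $n\ge 10$. A correct argument has to proceed by direct structural analysis: fix the two disjoint edges of $M$, study how every further edge can attach to their six vertices without creating $P$, $C$, or $P_2\cup K_3$, and count. The forbidden $P_2\cup K_3$ is used precisely to rule out an edge disjoint from the $P_2$-core, which is what drives the linear bound; this has to be argued from scratch, not deduced from Lemma~\ref{PCM}.
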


\begin{lemma}[\cite{P}]\label{2MC}
	For $n\ge 6$, $$\ex^{(2)}(n;\{M,C\})=\max\{10,n\}.$$
\end{lemma}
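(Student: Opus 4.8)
\medskip
\noindent\textbf{Plan.} The plan is to recast the statement in terms of intersecting loose-triangle-free $3$-graphs: a $3$-graph is $M$-free iff it is intersecting, and $C$-free iff it contains no loose triangle. First I would determine $\Ex^{(1)}(n;\{M,C\})$. An $M$-free $3$-graph has at most $\binom{n-1}{2}$ edges by Erd\H os--Ko--Rado, the full star $S_n$ being the unique extremal configuration for $n\ge 7$; since $S_n$ is also $C$-free, $\ex^{(1)}(n;\{M,C\})=\binom{n-1}{2}$ and $\Ex^{(1)}(n;\{M,C\})=\{S_n\}$ for $n\ge 7$ (and $n=6$ is settled by a short direct check). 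Consequently, in the definition of $\ex^{(2)}(n;\{M,C\})$ the condition ``$H\nsubseteq H'$ for all first-order extremal $H'$'' just means that $H$ is not a subgraph of the full star, equivalently that the covering number satisfies $\tau(H)\ge 2$.

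\medskip
\noindent\textbf{Lower bound.} I would display two competitors. The first is $K_5$ together with $n-5$ isolated vertices: it is intersecting, it is triangle-free (it has only five non-isolated vertices, while $C$ has six), it has $10$ edges, and it is not a substar since every vertex of $K_5$ lies in only $6$ of its $10$ edges; this gives $\ex^{(2)}(n;\{M,C\})\ge 10$. The second is the $3$-graph $H_0$ obtained from a copy of $K_4$ on $\{a,b,c,d\}$ by adding all $n-4$ further edges $\{a,b,v\}$, $v\notin\{a,b,c,d\}$. It has $n$ edges; a short inspection of its pairwise edge-intersections (each is $\{a,b\}$, or $\{a\}$, or $\{b\}$, or a pair inside $\{a,b,c,d\}$) shows it is intersecting and that no three of its edges can have three distinct single-vertex pairwise intersections, hence it is triangle-free; and $\{a,b\}$ is a minimum cover, so $H_0$ is not a substar. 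This gives $\ex^{(2)}(n;\{M,C\})\ge n$, and altogether $\ex^{(2)}(n;\{M,C\})\ge\max\{10,n\}$.

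\medskip
\noindent\textbf{Upper bound, $\tau(H)=3$.} Now let $H$ be intersecting, triangle-free, on $n$ vertices, with $\tau(H)\ge 2$; I want $|E(H)|\le\max\{10,n\}$. Every edge of $H$ is a cover, so $\tau(H)\in\{2,3\}$. Suppose $\tau(H)=3$ and fix a minimum cover $e=\{a,b,c\}$, which is an edge. Since no $2$-subset of $e$ is a cover, there are edges $f_a,f_b,f_c$ disjoint from $\{b,c\},\{a,c\},\{a,b\}$ respectively, and intersecting-ness forces $a\in f_a$, $b\in f_b$, $c\in f_c$. If two of $f_a,f_b,f_c$ intersected in a single vertex lying outside $e$, that pair together with $e$ would be a copy of $C$ on six vertices; so triangle-freeness forces $f_a,f_b,f_c$ to share a common pair $\{u,v\}$ with $\{u,v\}\cap e=\emptyset$. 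Every edge of $H$ must meet each of $e,f_a,f_b,f_c$: an edge avoiding both $u$ and $v$ must contain $a,b,c$ and so equals $e$, while an edge through $u$ (say) containing a vertex outside $\{a,b,c,u,v\}$ would, with $e$ and a suitable $f$, span a copy of $C$. Hence every edge lies inside $\{a,b,c,u,v\}$, so $H\subseteq K_5$ and $|E(H)|\le 10$.

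\medskip
\noindent\textbf{Upper bound, $\tau(H)=2$, and the main difficulty.} If $\tau(H)=2$, fix a minimum cover $\{x,y\}$ and split $E(H)$ into $\mathcal A_x$ (edges meeting $\{x,y\}$ only in $x$), $\mathcal A_y$, $\mathcal A_{xy}$; both $\mathcal A_x,\mathcal A_y$ are nonempty. If $\mathcal A_x$ had two edges meeting only in $x$, then together with any edge of $\mathcal A_y$ they would span a copy of $C$; hence $\{A\setminus x:A\in\mathcal A_x\}$ (and likewise $\{B\setminus y:B\in\mathcal A_y\}$) is an intersecting family of pairs, so it is a star or a triangle. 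If one of them is a triangle, then $H$ is confined to five vertices and a direct count gives $|E(H)|\le 9$. Otherwise $\mathcal A_x$ has a vertex $z$ common to all its edges and $\mathcal A_y$ a vertex $z'$ common to all of its edges; here I would run a case analysis on whether $z=z'$ and on $\min\{|\mathcal A_x|,|\mathcal A_y|\}$, in each branch forbidding the copies of $C$ formed by picking one edge from each of $\mathcal A_x,\mathcal A_y,\mathcal A_{xy}$. The outcome should be that either $|E(H)|$ is bounded by a small absolute constant ($\le 10$), or one of $\mathcal A_x,\mathcal A_y$ is a single edge, in which case $H$ turns out to be a subgraph of an $H_0$-type $3$-graph and $|E(H)|\le n$. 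Combining all cases yields $|E(H)|\le\max\{10,n\}$, which with the lower bound proves the lemma. I expect this last, star/star branch of the $\tau=2$ case to be the main obstacle: one must keep precise track of the positions of $z$ and $z'$ among the third vertices of the edges of $\mathcal A_x,\mathcal A_y,\mathcal A_{xy}$ so that the estimate passes correctly from the constant $10$ to the linear bound $n$ (and the degenerate value $n=6$ should be double-checked by hand).
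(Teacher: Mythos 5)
The paper does not actually prove this lemma: it is imported verbatim from reference [P] (``Intersecting families for 3-uniform hypergraphs, in preparation''), so there is no in-paper argument to compare yours with, and your proposal has to stand on its own. The parts you did write out are correct and natural: the translation to intersecting, $C$-free $3$-graphs with $\tau(H)\ge 2$ (valid for $n\ge 7$, where $S_n$ is the unique first-order extremal graph), the two lower-bound constructions $K_5\cup (n-5)K_1$ and the pair-star-plus-$K_4$ graph $H_0$, the observation $\tau(H)\in\{2,3\}$, the complete treatment of $\tau=3$ (forcing $H$ into the five vertices $\{a,b,c,u,v\}$), and the triangle-link subcase of $\tau=2$ all check out.

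The genuine gap is that the star/star branch of $\tau=2$ — the only place where the value $\max\{10,n\}$ is actually decided and where the $n$-edge extremal configurations live — is not proved but only asserted (``the outcome should be\dots''). To see that real work remains there, suppose every edge of $\mathcal A_x$ contains a fixed pair $\{x,z\}$ and $|\mathcal A_x|\ge 3$. Then intersectingness alone forces every $B\in\mathcal A_y$ to contain $z$, and the family consisting of all $\{x,z,u\}$, all $\{y,z,v\}$ and $\{x,y,z\}$ is intersecting and $C$-free with $2n-5$ edges; it is excluded only because all its edges pass through $z$, i.e.\ one must re-invoke the hypothesis $\tau\ge 2$ inside this branch to produce an edge $\{x,y,t\}$ with $t\ne z$. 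Only then does $C$-freeness (applied to triples with one edge from each of $\mathcal A_x,\mathcal A_y,\mathcal A_{xy}$, choosing $u\in U_x\setminus\{v,t\}$) collapse $\mathcal A_y$ to the single edge $\{y,z,t\}$ and $\mathcal A_{xy}$ to $\{\{x,y,z\},\{x,y,t\}\}$, giving $|E(H)|\le (n-3)+1+2=n$ and recovering your $H_0$ (as the star of the pair $\{x,z\}$ plus $K_4$ on $\{x,y,z,t\}$); the remaining subcases with $|\mathcal A_x|,|\mathcal A_y|\le 2$ need similar, easier checks. None of this is in your write-up, so the upper bound is unproved precisely in its main case; the dichotomy you state is true, but it is a conclusion of this analysis, not a substitute for it. A small further caveat: the hand-check you defer at $n=6$ would in fact show the formula cannot hold literally there, since $S_6$ and $K_5\cup K_1$ are both $1$-extremal with $10$ edges, so by the strict decrease $\ex^{(2)}<\ex^{(1)}$ the second-order value at $n=6$ is at most $9$; fortunately the present paper only uses the lemma at $n=7$.
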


\section{Proof of Theorem \ref{main}}\label{proofRam}

As mentioned in the Introduction, Jackowska has shown in \cite{J}, that $R(P;r)\ge r+6$ for all $r\ge1$. We are going to show that $R(P;10)\le 16$.

	We will show that every 10-coloring of $K_{16}$ yields a monochromatic copy of $P$. The idea of the proof is to
gradually reduce the number of vertices and colors  (by one in each step),  until we reach a coloring which yields a monochromatic copy of $P$.

%In each step we will use Theorems \ref{ex1}-\ref{ex5} to show that there exist a color which is subset of the star with possibly few edges. Then we delete the %center of the star (and possibly these few edges in the same color). Then we will get almost complete 3-graph with one less vertices and one less color. We will %repeated these procedure unless we get 3-graph on 8 vertices with at least 50 edges colored by two colors. Then there exist a color with at least 25 edges and %therefore, by Theorem \ref{ex1} it contains a copy of $P$. A contradiction.
	
	Let us consider an arbitrary 10-coloring of $K_{16}$, $K_{16}=\bigcup_{i=1}^{10}G_i$, and assume that for each $i\in [10]$, $P\nsubseteq G_i$. Since $|K_{16}|=560$, the average number of edges per color is 56, and therefore, by Theorems \ref{ex1}--\ref{ex5}, either for each $i\in [10]$, $G_i=K_6\cup S_{10}$, or there exists a color, say $G_{10}$, contained in one of the $3$-graphs: $S_{16},\co(16),K_4 \cup S_{12},\ro(16)$. We will show, that the later case must occur. Indeed, for each vertex $v\in V(K_{16})$ we have $\deg_{K_{16}}(v)={15 \choose 2}=105$ whereas for $v\in V(K_6\cup S_{10})$, $\deg_{K_6\cup S_{10}}(v)\in \{10,36,8\}$ depending on weather $v$ is a vertex of $K_6$, the center of the star $S_{10}$ or an other vertex. Since we are not able to obtain an odd number as a sum of even numbers, we can not decompose $K_{16}$ into edge-disjoint copies of $K_6\cup S_{10}$. Let us turn back to $G_{10}$. No matter in which of the four 3-graph $G_{10}$ is contained, we remove the center of the star (or comet, or rocket) together with up to four more edges of $G_{10}$, so that we get rid of color 10 completely (note that some other colors can also be affected by this deletion).
	
	As a result, we obtain a 3-graph $H_{15}$ on 15 vertices, colored with 9 colors, $H_{15}=\bigcup_{i=1}^{9}G_i$, 
 with $|H(15)|\ge 451$ (with some abuse of notation we  will keep denoting the  subgraphs of $G_i$ obtained in each step again by $G_i$). The average number of edges per color is at least 50.1, and therefore there exists a color, say $G_9$, with $|G_9|\ge 51$. This time we use Theorems \ref{ex1}--\ref{ex3} to conclude that either $G_9\subset S_{15}$ or $G_9 \subset \co(15)$. In either case we remove the center and, in case of the comet, one more edge being its head.
	
	We get a 3-graph $H(14)$ on 14 vertices with $|H(14)|\ge 359$, colored by 8 colors, $H(14)=\bigcup_{i=1}^8G_i$. The average number of edges per color is at least 44.9, and hence there exists a color, say $G_8$, with $|G_8|\ge 45$. Similarly as in the previous step we reduce the picture to
%Again by Theorem \ref{ex1}--\ref{ex3}, $G_8\subseteq S_{14}$ or $G_8\subseteq \co(14)$. As before, we remove the center of the star or the comet together with %possibly one edge to get 
a 3-graph $H(13)$ on 13 vertices with $|H(13)|\ge 280$, colored by 7 colors, $H(13)=\bigcup_{i=1}^7G_i$.
	
	This time the average number of edges per color is at least 40, and therefore, by Theorems \ref{ex1} and \ref{ex2}, either each color is a copy of $\co(13)$ or $K_6\cup K_6\cup K_1$, or there exists a color, say $G_7$, contained in the full star $S_{13}$. We will show in the similar way as before, that $H(13)$ can not by decomposed into edge-disjoint copies of $\co(13)$ and $K_6\cup K_6\cup K_1$, and therefore the later case must occur. Indeed, first notice that there is not enough space for two edge-disjoint copies of $K_6\cup K_6\cup K_1$ in $K_{13}$ and therefore also in $H(13)$. Fixed one copy of $K_6\cup K_6\cup K_1$ in $K_{13}$. By pigeon-hole principle, any other copy of $K_6$ must share at least three vertices with one of the fixed copies of $K_6$ and therefore they are not edge-disjoint. Now observe, that since during our procedure we have lost at most 6 edges of $K_{13}$, for each vertex $v\in V(H(13))$ we have $\deg_{H(13)}(v)\ge {12 \choose 2} - 6 = 60$ and also for each vertex of a comet $\co(13)$ which is not its center, we have $\deg_{\co(13)}(v)\le 8$. Since we can decompose $H(13)$ into at most seven copies of $\co(13)$, there must exist a vertex $v\in V(H(13))$ which is not a center of any of these comets and therefore $\deg_{H(13)}(v)\le 10+ 6\cdot 8 = 58<60$, a contradiction. Consequently we have $G_7 \subseteq S_{13}$ and, by removing the center of this star, we obtain a 6-coloring of a 3-graph $H(12)$ on 12 vertices with $|H(12)|\ge 214$.
	
	To proceed, let us assume for a while, that none of the colors $G_i$, $i \in [6]$, is a  star. Then, by Theorems \ref{ex1}--\ref{ex3}, each color with more than 32 edges is a subset of $K_6\cup K_6$. The average number of edges per color is at least 35.6, and hence there exists a color, say $G_6$, with $G_6\subset K_6\cup K_6$. We remove all edges of this copy of $K_6\cup K_6$, getting a bipartite 3-graph $H'(12)$ with a bipartition $V(H'(12))=V\cup U$, $|V|=|U|=6$, and with $|H'(12)|\ge 174$ edges colored by 5 colors, $H'(15)=\bigcup_{i=1}^5G_i$. Note, that every subgraph of $K_6\cup K_6$ contained in $H'(12)$ (and consequently each color class of $H'(12)$) has at most 36 edges. Since $3\cdot 36 + 2\cdot 32 = 172<174$, at least 3 colors must be subsets of $K_6\cup K_6$ and have at least 34 edges. Now observe, that if two color classes, say $G_1$ and $G_2$, have at least 34 edges each, then they are disjoint unions of two copies of $K_6$, one of the vertex set $U'_i \cup W'_i$, the other one on $U_i''\cup W''_i$, with four missing edges $U_i',U_i'',W_i',W_i''$, where $U=U_i'\cup U_i''$, $V=V'_i\cup V_i''$, $i=1,2$, and $\{U_1',U''_1\}=\{U_2',U''_2\}$ (See Fig. \ref{R2}). 
	\begin{figure}[!ht]
		\centering
		\includegraphics [width=7cm]{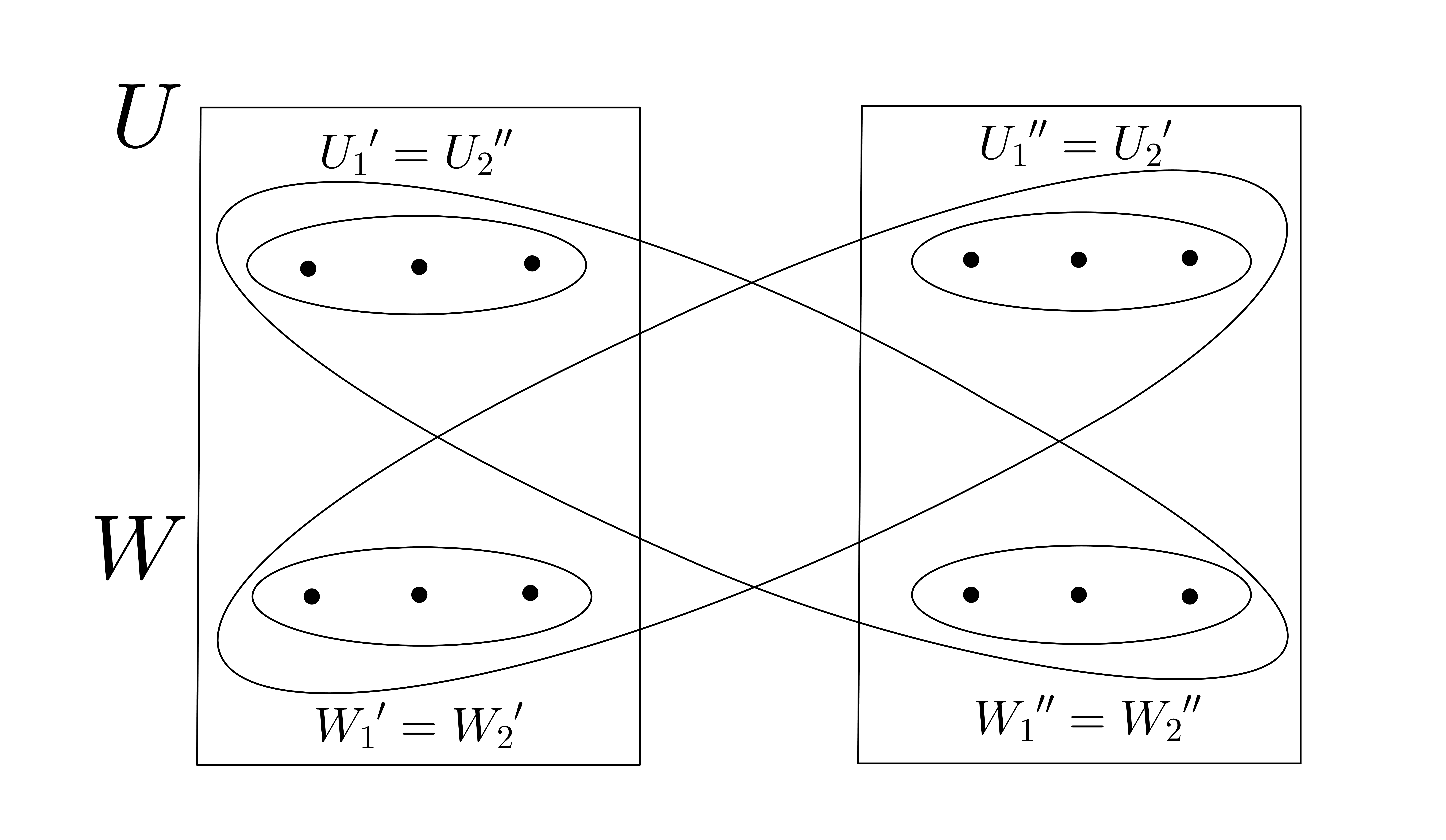}
		\caption{The partition of the set of vertices of $H'(12)$, $G_1$ and $G_2$.} \label{R2}
	\end{figure}
	Otherwise, if $1\le |U_1'\cap U_2'|\le 2$, $G_1$ and $G_2$ would share at least six edges, and thus $|G_1|+|G_2|\le 36+36-6 < 2\cdot 34$. This simply means that one of the partitions, of $U$ or of $W$, must be swapped. But this is impossible for three color classes. Consequently, at least one color, say $G_6$, is a  star. We remove the center of this star to get a 5-coloring of a 3-graph $H(11)$ on 11 vertices with $|H(11)|\ge 159$.

 By repeating this argument three more times, we finally arrive at a 2-coloring of a 3-graph $H(8)=G_1\cup G_2$, with $|H(8)|\ge 50$ which, by Theorem \ref{ex1}, should contain a copy of $P$, a contradiction.
\qed

	%The average number of edges per color is at least 31.8. We use Theorems \ref{ex1} and \ref{ex2} to infer that there exists a color, say $G_5$, with $G_5\subseteq S_{11}$. We remove the center of this star to get a 4-coloring of a 3-graph $H(10)$, $H(10)=\bigcup_{i=1}^4G_i$ on 10 vertices with $|H(10)|\ge 114$. Since the average number of edges per color is 28.5, we can use Theorems \ref{ex1} and \ref{ex2} to get that at least on color, say $G_4$ is subset of the star. As before we remove the center of the star to get 3-coloring of a 3-graph $H(9)$, $H(9)=G_1\cup G_2\cup G_3$, $|H(9)|\ge 78$. Again there exists a color, say $G_3$ such that $G_3\subseteq S_9$. We remove the center of this star.

\section{Proof of Theorem \ref{ex5}}

Let us define $\mathcal{H}_n=\Ex^{(1)}(n;P)\cup \Ex^{(2)}(n;P)\cup \Ex^{(3)}(n;P)\cup \Ex^{(4)}(n;P)$. To prove Theorem~\ref{ex5} we need to find, for each $n\ge 7$, a $P$-free, $n$-vertex 3-graph $H$ with the biggest possible number of edges such that, whenever $G\in \mathcal{H}_n$ then $H\nsubseteq G$. Moreover we will show, that $|H|=h_n$, where $h_n$ is the number of edges, given by the formula to be proved.

 First note that for each $n\ge 7$, all candidates for being 5-extremal 3-graphs do qualify, that is, are $P$-free, are not contained in any of the 3-graphs from $\mathcal{H}_n$, and have $h_n$ edges. To finish the proof, we will show that each $P$-free, $n$-vertex 3-graph $H$, not contained in any of 3-graph from $\mathcal{H}_n$ satisfy $|H| < h_n$ unless it is one of the candidates for being 5-extremal 3-graph itself.

For the latter task, we distinguish  two cases: when $H$ is connected and disconnected. The entire proof is inductive, in the sense that here and there we  apply the very Theorem~\ref{ex5}
for smaller instances of $n$, once they have been confirmed.

Let for all $n\ge 7$, $H$ be $P$-free $n$-vertex 3-graph such that for each $G \in \mathcal{H}_n$, $H\nsubseteq G$. Moreover let $H$ be different from all candidates for being 5-extremal 3-graphs with the same number of vertices. We will show that $|H|<h_n$.

\subsection{Connected case}

	We start with the connected case. First let us assume, that $M\nsubseteq H$ and consider consecutive intersecting families. Recall that for all $n\ge 7$, $H\nsubseteq S_n$, for $7\le n\le 12$, $H\nsubseteq G_1(n)$ and $H\nsubseteq G_2(n)$, for $7\le n\le 9$ and $n=11$, $H\nsubseteq G_3(n)$ and finally, for $n=7$ $H$ is not equal to any of 4-extremal 3-graphs for $M$. Therefore, by Theorems \ref{ntif}, \ref{ntntif} and \ref{ntntntif}, we get that for all $n\ge 7$,
	$$|H| < h_n.$$
	Consequently we will be assuming by the end of the proof, that $M\subset H$. If additionally $C\subset H$, then by Lemma \ref{spojnyzCM}, $H\subseteq K_5^{+(n-5)}$ and hence $|H|\le |K_5^{+(n-5)} |=n+5$. Therefore, for $n\ge 10$, $|H|<h_n$. If $n=7$, as $K_5^{+2}\in \mathcal{H}_7$, we have $H\nsubseteq K_5^{+2}$ and thus we may exclude this case. Lastly, for $8\le n \le 9$, by the definition of $H$, $H\neq K_5^{+(n-5)}$ and hence $|H| < h_n$. Therefore, in the rest of the proof we will be assuming, that $C\nsubseteq H$.
	
	Finally, let $H$ be connected $\{P,C\}$-free 3-graph containing $M$. Then by Lemma \ref{PCM}, for $7\le n \le 8$, $|H|\le 2n-4<h_n$ and for $n=9$, since $H\notin\Ex(9,\{P,C\}|M)$, we have $|H|<14=h_9$.
	
	For $10\le n\le 11$ we need two more facts, which we state here without the proof. Namely $\ex_{conn}(10;\{P,C\}|M)=19$ and $\Ex_{conn}(10;\{P,C\}|M)=\{\co(10)\}$. Since, by the definition of $H$, $H\neq \co(10)$, this implies, that $|H|<19=h_{10}$. Whereas for $n=11$ we have $\ex_{conn}^{(2)}(11;\{P,C\}|M)=18$, and therefore, as $H\nsubseteq \co(11)$, we get $|H|\le \ex_{conn}^{(2)}(11;\{P,C\}|M)=18 < 19=h_{11}$

	Recall, that for all $n\ge 11$, $H\nsubseteq \co(n)$. Moreover,
	for $12 \le n \le 13$, since $|\ro(n)|< h_n$, we may assume, that $H\nsubseteq \ro(n)$. Further, for $n=14$, by the definition of $H$ we have $H\neq \ro(14)$ and thus, if $H\subset \ro(14)$, then $|H|<|\ro(14)|= h_n$. Finally for all $n\ge 15$ we have $H\nsubseteq \ro(n)$.
	Therefore, since for all $n\ge 12$ we have
	$$
	h_n \le {n-6 \choose 2}+10,
	$$
	to complete the proof of the connected case it is enough to prove the following Lemma,
	\begin{lemma}\label{con}
		If $H$ is a connected, $n$-vertex, $n\ge12$, $\{P,C\}$-free 3-graph containing $M$ such that $H \nsubseteq \co(n)$ and $H \nsubseteq \ro(n)$, then $|H|<{n-6 \choose 2}+10$.
	\end{lemma}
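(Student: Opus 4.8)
\textbf{Proof proposal for Lemma \ref{con}.}

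The plan is to show that a connected $\{P,C\}$-free $3$-graph $H$ on $n\ge 12$ vertices containing $M$, but contained in neither $\co(n)$ nor $\ro(n)$, has fewer than $\binom{n-6}{2}+10$ edges. Since $H$ contains $M$ but not $C$, the natural first move is to exploit structural results on $\{P,C\}$-free graphs carrying a perfect-matching-like obstruction. The key input is Lemma \ref{PCM}, which says $\ex(n;\{P,C\}|M)=4+\binom{n-4}{2}$ for $n\ge 11$ with the unique extremal graph $\co(n)$. Thus any $\{P,C\}$-free $H\supseteq M$ with $H\ne\co(n)$ (and in particular $H\nsubseteq\co(n)$) already satisfies $|H|\le 4+\binom{n-4}{2}-1$; but this bound is far too weak, since $4+\binom{n-4}{2}$ exceeds $10+\binom{n-6}{2}$ for large $n$. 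So the real content is to rule out the ``second tier'' of dense $\{P,C\}$-free graphs containing $M$, and here the assumption $H\nsubseteq\ro(n)$ is what we must leverage, presumably via a conditional analogue of Lemma \ref{PCM} of second order — i.e. bounding $\ex^{(2)}_{conn}(n;\{P,C\}|M)$ and identifying $\ro(n)$ (and its subgraphs $K_4\cup S_{n-4}$-type pieces, if connected) as the extremal configurations.

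The main steps I would carry out are as follows. First, fix an edge-disjoint pair $e_1,e_2$ forming the copy of $M$, and look at the ``link'' structure: since $H$ is $C$-free, its shadow graph (pairs covered by some edge) cannot contain certain configurations, and since $H$ is $P$-free, no three edges form a loose path of length three. Second, I would distinguish cases according to how the vertices of high degree behave. Because $H\nsubseteq\co(n)$, $H$ is not ``a $K_4$ glued to a full star'', so either there is no vertex of very large degree (in which case a uniform degree bound of roughly $2n$ gives $|H|\le \tfrac{2n}{3}\cdot n$... no — one needs the sharper $P$-free-and-$M$-containing bounds, $\ex(n;P|M)$ type estimates, which already force something like $O(n)$ edges once $C$ is excluded), or there is a dominating vertex $v$ but the rest is constrained. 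In fact, since $C\nsubseteq H$ and $H$ is connected, I expect to invoke Lemma \ref{2MC}: a $\{C,M\}$-free graph has at most $\max\{10,n\}$ edges of second order, meaning that after removing one ``private'' edge-disjoint pair the remaining structure is very sparse — this should be the engine that collapses the count to linear-in-$n$ plus a constant, comfortably below $\binom{n-6}{2}+10$ once $n$ is not tiny. Third, I would handle the finitely many genuinely ``quadratic'' survivors — configurations built on a $K_6$ or $K_5$ plus a star (the shapes $K_5\cup S_{n-5}$, $K_6\cup S_{n-6}$ etc. appearing in Theorem \ref{ex5}) — but these are \emph{disconnected}, so connectivity of $H$ eliminates them outright; the only connected quadratic candidates are $\co(n)$ and $\ro(n)$, both excluded by hypothesis.

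Concretely, I would prove: if $H$ is connected, $\{P,C\}$-free, contains $M$, and is contained in neither $\co(n)$ nor $\ro(n)$, then in fact $|H|\le cn$ for an absolute constant $c$ (one should be able to get $c$ around $2$ or a bit more from the $\ex^{(2)}$-type bounds together with the absence of $C$), and then observe $cn<\binom{n-6}{2}+10$ for all $n\ge 12$ — or, if the crossover is at a slightly larger $n$, verify the handful of small cases $n=12,13,14$ directly by appealing to the already-established entries of Theorems \ref{ex2}, \ref{ex3}, \ref{ex4} restricted to connected graphs. The subtlety is that the uniform linear bound must be honest about the excluded graphs: one cannot simply quote Lemma \ref{PCM}, since its extremal graph $\co(n)$ is quadratic; one must re-run (or cite the second-order refinement of) that argument to see that once both $\co(n)$ and $\ro(n)$ are forbidden, no quadratic growth remains.

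\textbf{The main obstacle.} The hard part will be the structural dichotomy in the middle step: proving that a connected $\{P,C\}$-free graph containing $M$ which avoids being a sub-3-graph of $\co(n)$ or $\ro(n)$ has only $O(n)$ edges. The graphs $\co(n)$ and $\ro(n)$ are precisely the connected quadratic $\{P,C\}$-free graphs, so the claim amounts to a stability-type statement: any connected $\{P,C\}$-free $H\supseteq M$ that is ``far'' from $\co(n)$ and $\ro(n)$ must be sparse. I would attack this by analyzing the set $D$ of vertices of degree exceeding some threshold $t(n)=\Theta(n)$: if $|D|\ge 3$, a $P$ or $C$ is forced using the edges through these vertices together with the matching $M$; if $|D|\le 2$, the bulk of the edges pass through at most two vertices, and a careful case check on the link graphs of those one or two vertices — using $P$-freeness to bound how the links can interact and $C$-freeness to bound each link individually — pins $H$ down to a sub-3-graph of $\co(n)$ (one heavy vertex) or $\ro(n)$ (two heavy vertices), contradicting the hypothesis, except for sparse remainders. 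Getting the constants and the boundary cases $n\in\{12,13,14\}$ to line up with the stated formula $\binom{n-6}{2}+10$ is the routine-but-delicate bookkeeping I would defer.
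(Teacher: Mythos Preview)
Your proposal has a genuine and fatal gap: the central claim that a connected $\{P,C\}$-free $3$-graph $H$ on $n$ vertices containing $M$, with $H\nsubseteq\co(n)$ and $H\nsubseteq\ro(n)$, must satisfy $|H|\le cn$ for some absolute constant $c$ is \emph{false}. There exist such $H$ with $\Theta(n^2)$ edges. For a concrete example, take five vertices $U=\{x,a,b,c,d\}$ carrying the two edges of a $P_2$, namely $\{x,a,b\}$ and $\{x,c,d\}$; take $n-5$ further vertices $W=\{y,w_1,\dots,w_{n-6}\}$ carrying the full star centred at $y$; and add the two edges $\{y,a,b\}$ and $\{y,c,d\}$. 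One checks directly that this $H$ is connected, contains $M$, is $\{P,C\}$-free, and has $\binom{n-6}{2}+4$ edges; moreover two edges of $H$ miss $y$ and every other vertex lies in at most two edges, so $H$ embeds in neither $\co(n)$ (which has exactly one edge off its centre) nor $\ro(n)$. Thus the problem is genuinely a quadratic-versus-quadratic comparison, and no linear bound can do the work you assign to it. Your heuristic that ``the only connected quadratic candidates are $\co(n)$ and $\ro(n)$'' is simply not true.

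The paper's proof proceeds along completely different lines. It first uses Lemma~\ref{pcppm} to assume $P_2\cup K_3\subset H$, then fixes such a copy and decomposes $V(H)$ as $U\cup W_0\cup W_1$ (where $U$ is the vertex set of the $P_2$, and $W_1$ are the vertices lying in some edge of $H[W]$). The edges between $U$ and $W$ are classified into types $F^0,F^1,F^2$ according to how they meet $U$, and a battery of earlier inequalities (\ref{uwstar})--(\ref{r7}) together with Fact~\ref{UW0} bounds each piece. The argument then splits on the value of $|W_1|$: the cases $|W_1|=3$ and $|W_1|=4$ are handled directly (Facts~\ref{W13} and~\ref{W14}), the case $|W_1|\ge 5$ is handled by induction on $n$ via a vertex-deletion argument (Fact~\ref{w05}), and the base case $n=12$ is a separate five-part calculation (Fact~\ref{n12}). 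None of this is captured by your outline; in particular you never introduce the $P_2\cup K_3$ structure that drives the entire decomposition, and your suggested use of Lemma~\ref{2MC} (which concerns $\{M,C\}$-free graphs) is inapplicable here since $H$ contains $M$ by hypothesis.
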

	We devote an entire Section \ref{prepa} to prove Lemma \ref{con}.

\subsection{Disconnected case}

 Now let $H$ be disconnected and let $m=m(H)$ be the number of vertices in the smallest componet of $H$. We have $m\neq 2$, since no component of a 3-graph may have two vertices. We now break the proof into several cases.

 Let us express $H$ as a vertex disjoint union of two 3-graphs:

 $$
 H=H'\cup H'', \qquad |V(H')| = m, \quad |V(H'')|=n-m
 $$
 Then, clearly, both $H'$ and $H''$ are $P$-free, and thus
 \begin{equation}\label{ml}
 |H|\le \ex^{(1)}(m;P)+\ex^{(1)}(n-m;P).
 \end{equation}

 Below, to bound $|H|$, we use the Tur\'an numbers for $P$ of the $1^{\textrm{st}}$, $2^{\textrm{nd}}$, $3^{\textrm{rd}}$, $4^{\textrm{th}}$ and $5^{\textrm{th}}$ order and utilize, respectively, Theorems \ref{ex1}, \ref{ex2}, \ref{ex3}, \ref{ex4} and \ref{ex5} (per induction).

Let $v$ be an isolated vertex ($\mathbf{m=1}$). Since for $n=7$ and any 3-graph $H''$, $K_1\cup H''\subseteq K_1\cup K_6\in \mathcal{H}_7$, we may assume that $n\ge 8$.
For $8\le n \le 11$, as $H$ cannot be a sub-3-graph of $S_n$, $K_{6}\cup K_{n-6}$, $G_1(n)$ or $G_2(n)$, $H''$ is not a sub-3-graph of $S_{n-1}$, $K_{6}\cup K_{n-7}$, $G_1(n-1)$ and $G_2(n-1)$. Consequently, for $n=8,10$,
$$
|H| = |H''|\le \ex^{(4)}(n-1;P)<h_n.
$$
 For $n=9$ additionally we have $H''\nsubseteq G_3(8)$ and therefore 
 $$|H|\le \ex^{(5)}(8;P)=13 < 14=h_{9},$$
  whereas for $n=11$, $H''\nsubseteq K_5\cup K_5$ and $H''\nsubseteq \co(10)$. Consequently
$$
|H|=|H''|< \ex^{(5)}(10;P)=19=h_{11}.
$$
 For $n\ge 12$, since $H=K_1\cup H''$ is not a sub-3-graph of any of the 3-graphs in $\mathcal{H}_n$, we have $H''\nsubseteq S_{n-1}$ and $H''\nsubseteq \co(n-1)$. Moreover, for $n=12,13$, $H''\nsubseteq K_6\cup K_{n-7}$, for $n=12$, $H''\nsubseteq G_1(n-1)$ and $H''\nsubseteq G_2(n-1)$, for $n=14$, $H''\nsubseteq 2K_6\cup K_1$, for $n=14,15$, $H''\nsubseteq K_6\cup S_{n-7}$ and finally, for $n\ge 15$, $H''\nsubseteq K_4\cup S_{n-5}$. Consequently,
$$
|H|=|H''|\le \ex^{(4)}(n-1;P)< h_n.
$$

 For $\mathbf{m=3}$
  and $n=7,8$, by (\ref{ml}) we get
  $$|H|\le \ex^{(1)}(3;P)+ \ex^{(1)}(n-3;P)=1+\ex^{(1)}(n-3;P)< h_n,$$
 Since each disconnected 3-graph $H=H'\cup H''$ with $|V(H')|=3$ and $|V(H'')|=6$ is a sub-3-graph of $K_3\cup K_6\in \mathcal{H}_9$, we may assume that $n\neq 9$.
 For $n=10$ we have $K_3\cup K_6\cup K_1 \subset K_4 \cup K_6 \in \mathcal{H}_{10}$. Consequently $H''\nsubseteq K_6\cup K_1$ and thus $|H''|\le \ex^{(2)}(7;P)=15$. Hence $|H|\le 1 + 15 = 16 < 19=h_{10}$.

Further, for all $n\ge 11$, since $\co(n)\in \mathcal{H}_n$, we have $H''\nsubseteq S_{n-3}$. Therefore for $n\ge 12$,
 $$
|H|\le 1+ \ex^{(2)}(n-3;P)<h_n,
$$
whereas, for $n=11$ additionally we have $H\nsubseteq K_3\cup K_6 \cup K_2 \subset K_6\cup K_5\in \mathcal{H}_{11}$. Thus $H''\nsubseteq K_6\cup K_2$ and consequently 
$$|H| \le 1 + \ex^{(3)}(8;P)=17 < 19=h_{11}.$$

For $\mathbf{m=4}$ and $n=8$ by (\ref{ml}) we have 
$$|H|\le \ex^{(1)}(4;P)+\ex^{(1)}(4;P)=4+4=8<h_8.$$ 
For $n=9$, by the definition of $H$, $H\neq K_4\cup K_5$ end therefore $|H|<| K_4\cup K_5|=14=h_9$. Similarly like before, we may skip the case $n=10$, because each disconnected 3-graph $H=H'\cup H''$ with $|V(H')|=4$ and $|V(H'')|=6$ is a sub-3-graph of $K_4\cup K_6 \in \mathcal{H}_{10}$.
	For $n=11$, since $K_4\cup K_6 \cup K_1\subset K_5 \cup K_6 \in \mathcal{H}_{11}$, we have $H''\nsubseteq K_6\cup K_1$ and therefore $|H''|\le \ex^{(2)}(7;P)=15$ with the equality only for $H''=S_{7}$. But, by the definition of $H$, $H\neq K_4\cup S_7$, and hence 
	$$|H|<|K_4\cup S_7|=19=h_{11}.$$
	 Further, for $n=12, 13$, since $\Ex^{(1)}(n-4;P)=\{S_{n-4}\}$ and $H\neq H_4 \cup S_{n-4}$, we have $|H| <|H_4 \cup S_{n-4}|= h_{n}$. Finally, for $n\ge 14$, since $K_4 \cup S_{n-4} \in \mathcal{H}_n$ we get $H''\nsubseteq S_{n-4}$ and consequently,
	 $$
	 |H|\le\ex^{(1)}(4;P)+ \ex^{(2)}(n-4;P)<h_n.$$

Now let $\mathbf{m=5}.$ Notice that each disconnected 3-graph $H=H'\cup H''$ with $|V(H')|=5$ and $5\le |V(H'')|\le 6$ is a sub-3-graph of $K_5\cup K_5\in \mathcal{H}_{10}$ and $K_5 \cup K_6\in \mathcal{H}_{11}$ respectively. Therefore we may consider only $n\ge 12$. For $n=12$, since $K_5\cup K_6\cup K_1 \subset K_6 \cup K_6 \in \mathcal{H}_{12}$, we have $|H''|\le \ex^{(2)}(7;P)=15$ with the equality only for $H''=S_7$. But, by the definition of $H$, $H\neq K_5\cup S_7$ and hence $|H|<|K_5\cup S_7|= 25=h_{12}$.
Finally, for $n\ge 13$, by (\ref{ml}),
$$
|H|\le  \ex^{(1)}(5;P)+ \ex^{(1)}(n-5;P)= 10+ \binom{n-6}{2}\le h_n,
$$
where the equality is achieved only by the candidates for 5-extremal 3-graphs with the proper number of vertices.

For $\mathbf{m=6}$ we have $n\ge 12$, but as each disconnected 3-graph $H'\cup H''$ with $|V(H')|=|V(H'')|=6$ is a sub-3-graph of $K_6\cup K_6\in \mathcal{H}_{12}$, we may consider only $n\ge 13$. Recall, that $\{2K_6\cup K_1, K_6\cup S_7, K_6\cup G_1(7), K_6\cup G_2(7)\}\subset \mathcal{H}_{13}$ and therefore, for $n=13$, $H''$ is not contained in any of the 3-graphs $K_6\cup K_1, S_7, G_1(7), G_2(7)$. Consequently, $|H''| \le \ex^{(4)}(7;P) = 12$ with the equality only for $H''=G_3(7)$ and $H''=K_5^{+2}$. But, by the definition of $H$, $H\neq K_6\cup K_5^{+2}$ and $H\neq K_6\cup G_3(7)$ and thus 
$$|H|< |K_6\cup K_5^{+2}| =|K_6\cup G_3(7)|= h_{13}.$$
For the same reason, if $n=14$, then  $H'' \nsubseteq S_8$ and $H''\nsubseteq K_6\cup K_2$. Consequently,  
$$|H|=|H'|+|H''|\le \ex^{(1)}(6;P)+\ex^{(3)}(8;P)=20+16<39=h_{14},$$
 whereas for $n=15$, we have $H''\nsubseteq S_{9}$ and hence 
 $$|H|\le\ex^{(1)}(6;P)+ \ex^{(2)}(9;P)=20+21<46=h_{15}.$$
Further, for $n=16,17$, by the definition of $H$, $H\neq K_6\cup S_{n-6}$. Consequently, as $\Ex(n-6;P)=\{S_{n-6}\}$, we get
 $$
 |H|<|K_6\cup S_{n-6}|=h_n.
 $$
Finally, for $n\ge 18$, by (\ref{ml}),
$$
\begin{aligned}
|H|\le \ex^{(1)}(6;P)+ \ex^{(1)}(n-6;P)=20+ \binom{n-7}{2} <\binom{n-6}{2} +10=h_n.
\end{aligned}
$$

If $\mathbf{m=7}$, then $n\ge 14$.  For $n=14$, since $H\nsubseteq 2K_6\cup 2K_1\in \mathcal{H}_{14}$, at least one of the components of $H$ is not a sub-3-graph of $K_6\cup K_1$ and therefore has at most $\ex^{(2)}(7;P)=15$ edges. Consequently,
$$
|H|\le  \ex^{(1)}(7;P)+ \ex^{(2)}(7;P)= 20+ 15<39= h_{14}.
$$
To bound the number of edges of $H$ for $n\ge 15$ we use (\ref{ml}) to get
$$
|H|\le  \ex^{(1)}(7;P)+ \ex^{(1)}(n-7;P)= 20+ \binom{n-8}{2}<\binom{n-6}{2}+10\le h_n.
$$

Finally, for $\mathbf{m\ge 8}$ we have $n\ge 16$ and, by (\ref{ml}),
$$
\begin{aligned}
|H|\le \ex^{(1)}(m;P)+ \ex^{(1)}(n-m;P)=\binom{m-1}{2}+ \binom{n-m-1}{2}\\\le\binom 72+\binom{n-9}2 <\binom{n-6}{2} +10\le h_n.
\end{aligned}
$$

\bigskip

\section{The proof of Lemma \ref{con}}\label{prepa}
Recall that $H$ is a connected, $n$-vertex, $n\ge12$, $\{P,C\}$-free 3-graph such that  $M\subset H$, $H \nsubseteq \co(n)$ and $H \nsubseteq \ro(n)$. We need to show that
$$
|H|<{n-6 \choose 2}+10.
$$

Since for $n\ge 11$, by Lemma \ref{pcppm}
$$
\ex(\{n;P,C,P_2\cup K_3\}|M)=2n-4 < {n-6 \choose 2}+10,
$$
 we may assume that $ P_2\cup K_3\subset H$.
Let us denote a copy of $P_2$ from $P_2\cup K_3$ in $H$ by $Q$ and the vertex of degree two in $Q$ by $x$. We let $U=V(Q)$, $V=V(H)$ and $W=V\setminus U$.
  Moreover, let $W_0$ be the set of vertices of degree zero in $H[W]$ and $W_1=W\setminus W_0$.
(see Fig. \ref{FigR6}).  Note that, by definition,  $H[W]=H[W_1]$ and $|W_1|\ge3$.

\bigskip
\begin{figure}[!ht]
\centering
\includegraphics [width=9cm]{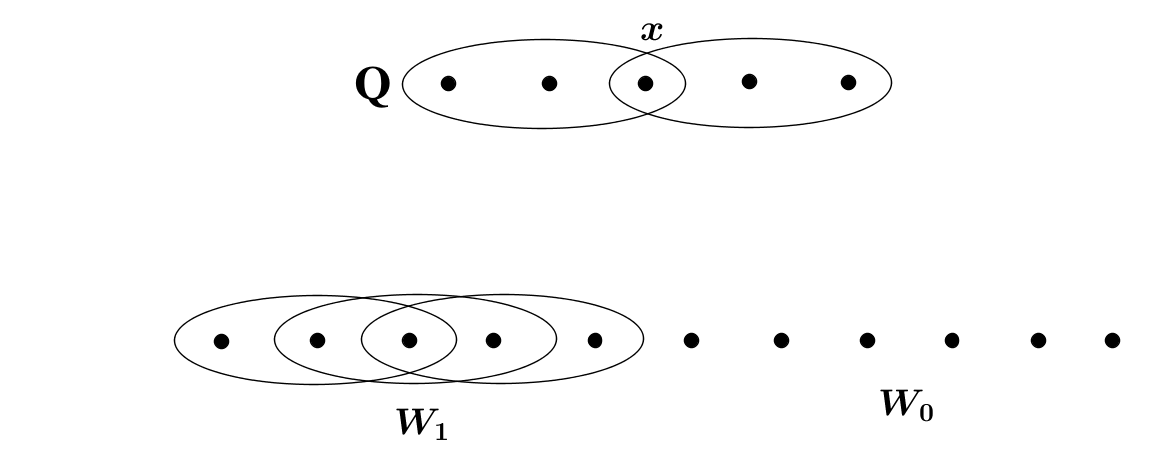}
\caption{Set-up for the proof of Lemm \ref{con}} \label{FigR6}
\end{figure}

\medskip

\noindent We also split the set of edges of $H$.
First, notice that, since $H$ is $P$-free, there is no edge with one vertex in each $U$, $W_0$, and $W_1$. We define $H_i=\{h\in H: h\cap U\neq \emptyset, h\cap W_i\neq \emptyset\}$, where $i=0,1$. Then, clearly,
\begin{equation}\label{HHH}
    H=H[U]\cup H[W]\cup H_0\cup H_1,
\end{equation}
with all four parts edge-disjoint.
Since by definition $H[U]\cup H_0=H[U\cup W_0]$, sometimes we will use the following equality
\begin{equation}\label{HH}
H=H[U\cup W_0]\cup  H_1\cup H[W].
\end{equation}
Recall that $H$ is $C$-free, and therefore one can use Theorem \ref{c3} to get the bounds, for $|W_0|\ge 1$
\begin{equation}\label{uwstar}
|H[U\cup W_0]| \le \binom{|U\cup W_0|-1}{2}=\binom{|W_0|+4}{2}
\end{equation}
and for $|W_1|\ge 6$,
\begin{equation}\label{hwstar}
|H[W]|\le \binom{|W_1|-1}{2}
\end{equation}

Notice that for each edge $h\in H_0\cup H_1$ with $|h\cap U|=1$ we have $h\cap U = \{x\}$, because otherwise $h$ together with $Q$ would form a copy of $P$ in $H$. We let
$$
F^0=\{h\in H_0\cup H_1: h\cap U=\{x\}\}.
$$
Also, to avoid a copy of $C$ in $H$, if for $h\in H_0\cup H_1$ we have $|h\cap U|=2$ then the pair $h\cap U$ is contained in an edge of $Q$. For $k=1,2$, we define
$$
F^k=\{h\in H_0\cup H_1: \quad |h\cap U\setminus\{x\}|=k\}.
$$
Clearly, $H_0\cup H_1=F^0\cup F^1\cup F^2$ (see Fig.
\ref{FigR7}). Further, for $i=0,1$ and $k=0,1,2$, we set
$$F_i^k=F^k\cap H_i.$$
\bigskip
\begin{figure}[!ht]
\centering
\includegraphics [width=7cm]{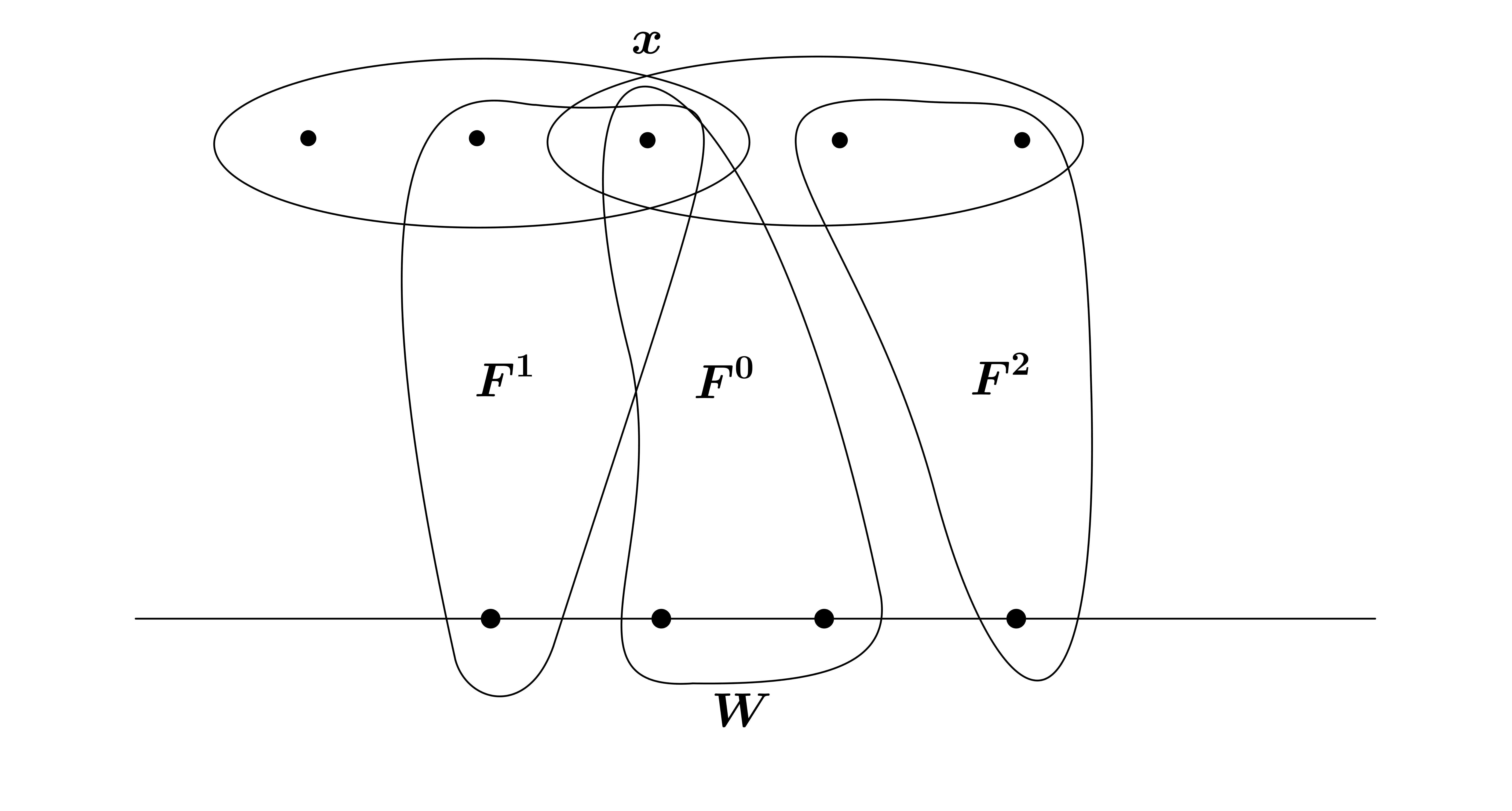}
\caption{Three types of edges in $H_0\cup H_1$} \label{FigR7}
\end{figure}
It was noticed in \cite{JPRr} that, as $H$ is $P$-free, $F^1_1=\emptyset$ and therefore,
\begin{equation}\label{r6}
H_1=F_1^0\cup F_1^2.
\end{equation}
Moreover, for all $v\in W$ we have
\begin{equation}\label{r2}
F^0(v)=\emptyset \quad\textrm{or}\quad F^2(v)=\emptyset,
\end{equation}
and, by the definition of $F^1$ and $F^2$,
\begin{equation}\label{r3}
|F^1(v)|\le 4 \quad \textrm{and}\quad |F^2(v)|\le 2.
\end{equation}
where for a given subset of edges $G\subseteq H$ and for a vertex $v\in V(H)$ we set $G(v)  = \{h\in G: v\in h\}$.

In the whole proof we will be using the fact, that for all edges $e\in F^0$, the pair $e\cap W_1$ is \emph{nonseparable} in $H[W]$, that is, every edge of $H[W]$ must contain both these vertices or none. Consequently, for each $v\in W_0$, $|F^0(v)|\le |W_0|-1$ and thus, by (\ref{r2}) and (\ref{r3}),
\begin{equation}\label{hv}
|H(v)| =|F^0(v)|+|F^1(v)|+|F^2(v)|\le 4 + \max\{2,|W_0|-1\}.
\end{equation}

Observe also that, because $H$ is connected, $H_1\neq\emptyset$.
 Consequently, since the presence of any edge of $H_1$ forbids at least 4 edges of $H[U]$,
 \begin{equation}\label{hu1}
 |H[U]|\le 6.
 \end{equation}
\medskip

Moreover, in \cite{JPRr} the authors have proved the following bounds on the number of edges in~$H_1$:

\begin{equation}\label{r5}
\textrm{For}\quad  |W_1|\ge 4, \quad |F^2_1|\le 2|W_1|-4.
\end{equation}

\begin{equation}\label{r4}
\textrm{For}\quad  |W_1|\ge 3, \quad |H_1|\le 2|W_1|-3.
\end{equation}

\begin{equation}\label{r1}
\textrm{For}\quad  |W_1|\ge 3, \quad |F_1^0|\le |W_1|.
\end{equation}
As a consequence of these inequalities one can prove the following
\begin{equation}\label{r7}
	\textrm{For}\quad  |W_1|\ge 7, \quad |H[U]|+ |H_1|\le 2|W_1|-1.
\end{equation}
Indeed, if $|H_1|\le |W_1|$, then (\ref{r7}) results from (\ref{hu1}) and the inequality $|W_1|-1\ge 7-1=6$. Otherwise, by (\ref{r1}), (\ref{r6}) and (\ref{r2}), there exists a vertex $v\in W_1$, such that $|F_1^2(v)|=2$. As $H$ is $\{P,C\}$-free, by the definition of $F_1^2(v)$, this implies, that $|H[U]|=2$ and (\ref{r7}) follows from (\ref{r4}).

We also need the following fact proven in \cite{PR}.

\begin{fact}\cite{PR}\label{UW0} If  $F^2_1\neq \emptyset$, then
	\begin{equation}\label{e4}
	|H[U\cup W_0]|\le
	\left\{\begin{array}{ll}
	8 &\textrm{for } |W_0|=1,\\
	3|W_0|+7 &\textrm{for } 2\le |W_0|\le 4,\\
	\binom{|W_0|+2}{2}+1 &\textrm{for } |W_0|\ge 5.
	\end{array}\right.
	\end{equation}
\end{fact}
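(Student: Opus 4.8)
The plan is to use the edge supplied by $F^2_1$ to make one of the two ``side pairs'' of $Q$ rigid inside $H[U\cup W_0]$, and then to count. Write the edges of $Q$ as $\{x,a,b\}$ and $\{x,c,d\}$, so that $U=\{x,a,b,c,d\}$; by the definition of $F^2$ we may assume the given edge of $F^2_1$ is $e^*=\{a,b,w^*\}$ with $w^*\in W_1$, and, since $w^*\in W_1$, we fix an edge $f=\{w^*,p,q\}$ of $H[W]$ through $w^*$. The key observation is that every edge $g$ of $H[U\cup W_0]$ satisfies $|g\cap\{a,b\}|\neq1$: if $|g\cap\{a,b\}|=1$, then $g$ meets $e^*$ in exactly one vertex and is disjoint from $f$ (because $g\subseteq U\cup W_0$ while $f\subseteq W_1$), so $g,e^*,f$ are three distinct edges on the $7$ distinct vertices of $g\cup\{w^*,p,q\}$ together with the vertex of $\{a,b\}$ missed by $g$, i.e.\ a loose path $P$, a contradiction. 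Hence $H[U\cup W_0]=A\cup B$ (edge-disjoint), where $A$ is the set of edges containing $\{a,b\}$ and $B=H[\{x,c,d\}\cup W_0]$ is the set of edges avoiding $\{a,b\}$; moreover $\{x,a,b\}\in A$ and $\{x,c,d\}\in B$ because $Q\subseteq H$. Writing $L=\{v:\{a,b,v\}\in H\}\subseteq\{x,c,d\}\cup W_0$, we have $x\in L$ and $|A|=|L|\le |W_0|+3$.

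For the large-$|W_0|$ regime I first bound $B$: it is a $C$-free $3$-graph on the $|W_0|+3$ vertices $\{x,c,d\}\cup W_0$, so Theorem~\ref{c3} gives $|B|\le\binom{|W_0|+2}{2}$ once $|W_0|\ge3$. Repeating the $7$-vertex argument shows that for each $v\in L$ there is \emph{no} loose path $\{a,b,v\},g_1,g_2$ with $g_1,g_2\in B$; equivalently, for every $B$-edge $g_1\ni v$, any $B$-edge meeting $g_1$ in a vertex other than $v$ must contain the whole pair $g_1\setminus\{v\}$. This is exactly what makes the bound tight: $|B|$ can get close to $\binom{|W_0|+2}{2}$ only when $B$ is, or is nearly, the full star with some center $y$; but then any $v\in L$ of positive $B$-degree with $v\ne y$ completes a forbidden $P$, and the subcase $y\ne x$ is already killed by the obligatory edge $\{x,a,b\}\in A$ (since $x$ has positive $B$-degree in a star centered elsewhere). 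So for $|W_0|\ge5$ a short split according to how far $|B|$ falls below $\binom{|W_0|+2}{2}$ yields $|A|+|B|\le\binom{|W_0|+2}{2}+1$, which is the claimed bound.

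For the small cases $|W_0|\in\{1,2,3,4\}$, where $C$-freeness of $B$ is weak or vacuous, I argue directly from the rigidity step: $|A|\le|W_0|+3$, $|B|$ is at most the number of triples of $\{x,c,d\}\cup W_0$, and then a handful of configurations are eliminated by hand using $\{P,C\}$-freeness and $Q\subseteq H$ (for instance $\{x,c,d\}\in B$ together with any $B$-edge through $c$ or $d$ and any $A$-edge off $x$ typically completes a loose path). This already gives $8$ for $|W_0|=1$ and is brought down to $3|W_0|+7$ for $|W_0|\in\{2,3,4\}$. I expect the middle step to be the main obstacle: one has to wring enough structural information about the near-$\{P,C\}$-extremal $3$-graph $B$ out of the single forbidden configuration ``$\{a,b,v\},g_1,g_2$ a loose path'' to force $A=\{\{x,a,b\}\}$ in the large-$|W_0|$ range; the finitely many small cases $|W_0|\le4$ are routine but tedious and must be verified carefully so that no configuration is overlooked.
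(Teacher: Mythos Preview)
The paper does not actually prove Fact~\ref{UW0}: it is quoted verbatim from \cite{PR} with the words ``We also need the following fact proven in \cite{PR}'', and no argument is given here. So there is no in-paper proof to compare your proposal against.

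That said, your plan is headed in the right direction and the opening reduction is correct: using the $F_1^2$-edge $\{a,b,w^*\}$ together with an $H[W]$-edge through $w^*$ to force $|g\cap\{a,b\}|\neq 1$ for every $g\in H[U\cup W_0]$ is exactly the right structural fact, and it cleanly gives the decomposition $H[U\cup W_0]=A\cup B$ with $A$ the $\{a,b\}$-edges and $B=H[\{x,c,d\}\cup W_0]$. (Note, incidentally, that the ``no loose path $\{a,b,v\},g_1,g_2$ with $g_1,g_2\in B$'' constraint you derive for $v\in L$ follows directly from $P\not\subseteq H$; you do not need to ``repeat the $7$-vertex argument'' for that.)

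The genuine gap is exactly where you yourself flag it: the sentence ``a short split according to how far $|B|$ falls below $\binom{|W_0|+2}{2}$ yields $|A|+|B|\le\binom{|W_0|+2}{2}+1$'' is not a proof. You have shown that if $B$ is the \emph{full} star then its center must be $x$ and $L=\{x\}$, but you have not handled the case where $B$ is merely large---say $|B|=\binom{|W_0|+2}{2}-t$ for small $t$---and you need to show $|L|\le t+1$ there. The nonseparability constraint you isolate (for each $\{v,s,t\}\in B$ with $v\in L$, the pair $\{s,t\}$ is nonseparable in $B\setminus B(v)$) is the right tool, but turning it into the precise tradeoff $|L|+|B|\le\binom{|W_0|+2}{2}+1$ requires a real argument, not an assertion. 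Likewise the ``handful of configurations \ldots\ eliminated by hand'' for $|W_0|\le 4$ is promised rather than delivered. As written, this is a reasonable sketch of where the proof should go, but the central inequality is not established.
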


\bigskip

	We split the whole proof of Lemma \ref{con} into a few short parts, Facts \ref{w0empty}-\ref{w05}.

\begin{fact}\label{w0empty}
	For $n\ge 13$ if $W_0=\emptyset$ and $H_1\neq\emptyset$, then  $|H|< 10 +{n-6\choose 2}$.
\end{fact}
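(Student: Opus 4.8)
Since $W_0=\emptyset$, we have $W=W_1$ and, by \eqref{HH}, $H=H[U]\cup H_1\cup H[W]$, a disjoint union. The plan is to bound each of these three pieces separately as a function of $|W_1|=n-5$, and then add them up. For $H[U]$ we use \eqref{hu1}: since $H$ is connected, $H_1\neq\emptyset$, so $|H[U]|\le 6$. For $H[W]=H[W_1]$ we use that $H$ is $C$-free and $|W_1|=n-5\ge 8$ when $n\ge 13$, so Theorem \ref{c3} applies and \eqref{hwstar} gives $|H[W]|\le\binom{|W_1|-1}{2}=\binom{n-6}{2}$. This already accounts for the main term $\binom{n-6}{2}$ in the target bound, so the whole point is to show $|H[U]|+|H_1|<10$, i.e.\ at most $9$.

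First I would dispose of the easy sub-case $|H_1|\le 3$: then $|H[U]|+|H_1|\le 6+3=9$ by \eqref{hu1}, and we are done since the bound on $|H[W]|$ is strict only when needed — actually I should be slightly careful and note $\binom{n-6}{2}$ is attained by $S_{n-5}$, so to get a strict inequality I either observe that $S_{n-5}$ together with a nonempty $H_1$ and nonempty $H[U]$ forces a copy of $P$ or $C$ (so $H[W]\subsetneq S_{n-5}$ when $H_1,H[U]\neq\emptyset$), or simply note $|H[U]|+|H_1|\le 9<10$ already gives the strict inequality when combined with $|H[W]|\le\binom{n-6}{2}$. The substantive case is $|H_1|\ge 4$. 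Here I invoke \eqref{r4}: $|H_1|\le 2|W_1|-3=2(n-5)-3=2n-13$. Combined with \eqref{hu1} this gives $|H[U]|+|H_1|\le 2n-7$, which is far too weak for large $n$ — so bounding $|H[W]|$ by $\binom{n-6}{2}$ and $H[U]\cup H_1$ crudely will not work, and I instead need to trade: the presence of many edges in $H_1$ should force $H[W]$ to be much smaller than $\binom{n-6}{2}$.

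The key mechanism is the nonseparability fact stated just before \eqref{hv}: for every $e\in F^0$, the pair $e\cap W_1$ is nonseparable in $H[W]$. Suppose $F^0_1\neq\emptyset$, say $e\in F^0_1$ with $e\cap W_1=\{u,v\}$. Nonseparability means every edge of $H[W]$ contains both $u,v$ or neither; the edges through $\{u,v\}$ form a "book" and the edges avoiding both live on $|W_1|-2$ vertices. A short argument (using that $H[W]$ is $C$-free, and that a vertex $w\in W_1\setminus\{u,v\}$ in an edge with $\{u,v\}$ and also in an edge of $H[W\setminus\{u,v\}]$ would create a path/triangle structure interacting with $e$) should show $|H[W]|\le |W_1|-2+\ex^{(1)}(|W_1|-2;C)=\binom{|W_1|-3}{2}+|W_1|-2$ or something of this order, comfortably below $\binom{n-6}{2}-6$ for $n\ge 13$. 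Then even the crude bound $|H[U]|+|H_1|\le 2n-13+6$ suffices. If instead $F^0_1=\emptyset$, then by \eqref{r6} $H_1=F^2_1$, so $F^2_1\neq\emptyset$ (as $H_1\neq\emptyset$), and now Fact \ref{UW0} applies with $|W_0|=0$ — but wait, Fact \ref{UW0} is stated for $|W_0|\ge 1$; with $W_0=\emptyset$ I should instead argue directly that $F^2_1\neq\emptyset$ forces $|H[U]|=2$ (any edge of $F^2_1$ uses a pair inside an edge of $Q$, and a second edge of $H[U]$ would complete a $P$ or $C$), whence by \eqref{r4} $|H[U]|+|H_1|\le 2+(2n-13)=2n-11$; combined with the improved book-bound on $|H[W]|$ coming from the nonseparable pairs attached to $F^2$ edges (the pair $h\cap W_1$ for $h\in F^2_1$ — here I'd need the analogous nonseparability, or use \eqref{r3} and \eqref{r2} to see most $W_1$-vertices carry no $F^0$ edges so $H[W]$ is constrained differently).

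The main obstacle is precisely this last case-analysis: showing that when $|H_1|$ is large (the regime where \eqref{r4} does not by itself finish the count), $H[W]$ is forced to be so small — essentially a book plus a small $C$-free remainder — that the total stays below $\binom{n-6}{2}+10$. I expect this to require a careful look at how $F^0$ and $F^2$ edges pin down the structure of $H[W]$ via nonseparability, together with the $\{P,C\}$-freeness of the whole graph, and it is where the real content of the fact lies; the arithmetic afterwards is routine.
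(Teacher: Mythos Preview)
Your overall decomposition $|H|=|H[U]|+|H_1|+|H[W]|$ and the plan to trade between $|H_1|$ and $|H[W]|$ via nonseparability are on the right track, and your $F_1^0\neq\emptyset$ branch is essentially one of the sub-cases in the paper. The genuine gap is the $F_1^0=\emptyset$ branch. There $H_1=F_1^2$, and every edge of $H_1$ meets $W_1$ in a \emph{single} vertex, so there is no pair in $W_1$ to be nonseparable; your parenthetical ``here I'd need the analogous nonseparability'' has nothing to appeal to. Consequently nothing you have written forces $|H[W]|$ below $\binom{n-6}{2}$ in this case, while $|F_1^2|$ can be as large as $2|W_1|-4=2n-14$ by \eqref{r5}. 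Even granting $|H[U]|=2$, the sum $2+(2n-14)+\binom{n-6}{2}$ overshoots the target for every $n\ge 14$, so the plan as stated cannot close.

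The paper's fix is to change the case split: instead of splitting on $F_1^0$, split on whether $H[W]\subseteq S_{n-5}$. If $H[W]\subseteq S_{n-5}$ with center $y$, then $P$-freeness forces every edge of $F_1^2$ to land on $y$, so $|F_1^2|\le 2$ by \eqref{r3}; now either $F_1^0=\emptyset$ and $|H|\le 6+2+\binom{n-6}{2}$, or $F_1^0\neq\emptyset$ and nonseparability cuts $|H[W]|$ down to roughly $\binom{n-8}{2}$, much as you sketched. If $H[W]\nsubseteq S_{n-5}$, the paper abandons any attempt to control $H_1$ tightly and instead uses $|H[W]|\le\ex^{(2)}(n-5;P)$ (Theorem~\ref{ex2}) together with \eqref{r7}, which already packages the observation that large $H_1$ forces $|H[U]|=2$, to get $|H[U]|+|H_1|\le 2n-11$; the arithmetic then works for $n\ge 14$, with $n=13$ handled by pushing $|H[W]|$ one order further down. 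The single idea you are missing is that the dichotomy ``$H[W]$ is a star or not'' is what simultaneously tames $F_1^2$ (in the star case) and $|H[W]|$ (in the non-star case); your split on $F_1^0$ does neither in the bad branch.
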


\begin{proof}
	Let us consider two cases, whether or not $H[W]\subseteq S_{n-5}$. If $H[W]\subseteq S_{n-5}$ then, since $H$ is $P$-free, by (\ref{r3}), $|F^2|=|F^2(y)|\le 2$ where $y\in W_1$ is the center of the star $S_{n-5}$. Additionally if $F_1^0=\emptyset$, then by (\ref{HHH}),  (\ref{hu1}), (\ref{r6}) and (\ref{hwstar}),
	$$
	|H|= |H[U]| + |H_1| + |H[W]| \le 6 + 2 + {n-6\choose 2} =  {n-6\choose 2} + 8< {n-6\choose 2} + 10.
	$$
	Otherwise $F_1^0\neq\emptyset$. As for each $h\in F_1^0$, the pair $h\cap W_0$ is nonseparable, one can show, that $|H[W]|\le {n-8 \choose 2}+1$. By (\ref{r1}), $|F_1^0|\le |W_1|=n-5$ and hence by (\ref{r6}), $|H_1|\le n-5 + 2 = n-3$. Consequently, by (\ref{HHH}) and (\ref{hu1}),
	$$
	|H|= |H[U]| + |H_1| + |H[W]| \le 6 + n-3 + {n-8\choose 2} +1=  {n-7\choose 2} +12< {n-6\choose 2} + 10.
	$$
	
	Now we move to the case $H[W]\nsubseteq S_{n-5}$ and use Theorem \ref{ex1} to bound the number of edges in $H[W]$ by $\ex^{(2)}(n-5;P)$. Moreover, by (\ref{r7}), $|H[U]| + |H_1| \le2 (n-5) - 1 = 2n-11$. Consequently, by (\ref{HHH}) and Theorem \ref{ex2},
	$$
	|H|= |H[U]| + |H_1| + |H[W]| \le 2\cdot n -11 +\ex^{(2)}(n-5;P)<{n-6\choose 2} + 10,
	$$
	where the last inequality is valid for $n\ge 14$.
	For $n=13$ we have to strengthen the bound of $H[W]$. As $H[W]\nsubseteq K_6\cup K_2$, $H[W]\neq G_1(8)$ and $H[W]\neq G_2(8)$, by Theorems \ref{ex1}, \ref{ex2} and \ref{ex3} we have $|H[W]|< \ex^{(3)}(8;P)=16$ and therefore
	$$
	|H| < 2\cdot 13 -11 +16=31= {13-6\choose 2} + 10.
	$$
%If $n\ge 14$, as $|H[W]\le \ex^{(2)}(n-5;P)$ we have for
	% $14\le n \le 17$, $|H[W]|\le 20 + {n-11\choose 3}$ and therefore, by  (\ref{HHH}) and (\ref{r7}),
	%$$
	%|H|= |H[U]| + |H_1| + |H[W]| \le 2n-11 + 20+{n-11\choose 3}< {n-6\choose 2} + 10.
	%$$
	 %whereas for $n\ge 18$ we get $\ex^{(2)}(n-5;P)= {n-9\choose 2}+4$ and hence,
	%$$
%	|H| \le 2n-11 + {n-9\choose 2} +4=  {n-7\choose 2} +10< {n-6\choose 2} + 10.
%	$$

\end{proof}

\bigskip

\begin{fact}\label{W13}
	For $n\ge 13$  if $H_1\neq\emptyset$, $H\nsubseteq \co(n)$ and $|W_1|=3$ then  $|H|< 10 +{n-6\choose 2}$
\end{fact}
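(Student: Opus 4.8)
The plan is to analyze the structure of $H$ under the hypothesis $|W_1| = 3$, say $W_1 = \{p, q, r\}$, and then to extract from the constraints enough restrictions on the various edge types to push the total edge count below $\binom{n-6}{2} + 10$. Since $|W_1| = 3$, the sub-3-graph $H[W]$ has at most one edge, namely $\{p,q,r\}$, so $|H[W]| \le 1$; this is the key simplification for this regime. The bulk of the edges must therefore live in $H[U \cup W_0]$ and in $H_1$. For $H_1$ we use \eqref{r4} with $|W_1| = 3$ to get $|H_1| \le 3$, and more precisely \eqref{r1} gives $|F_1^0| \le 3$ while \eqref{r6} gives $H_1 = F_1^0 \cup F_1^2$; note also $|W_0| = n - 8$, so $H[U \cup W_0]$ has $n - 4$ vertices.

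Next I would split into two cases according to whether $F_1^2 = \emptyset$ or not. If $F_1^2 \ne \emptyset$, Fact~\ref{UW0} applies: for $|W_0| = n - 8 \ge 5$ (i.e.\ $n \ge 13$) it gives $|H[U \cup W_0]| \le \binom{|W_0|+2}{2} + 1 = \binom{n-6}{2} + 1$, and combining with $|H_1| \le 3$ and $|H[W]| \le 1$ yields $|H| \le \binom{n-6}{2} + 5 < \binom{n-6}{2} + 10$, as required (the small cases $|W_0| \in \{5\}$, i.e.\ $n = 13$, need to be checked directly against the $3|W_0| + 7$ bound as well, but those give even more room). The delicate case is $F_1^2 = \emptyset$, so that $H_1 = F_1^0$ with $|F_1^0| \le 3$ and $H_1 \ne \emptyset$ by connectedness. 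Here Fact~\ref{UW0} does not apply, so I would instead bound $|H[U \cup W_0]|$ using the $C$-freeness of $H$ via \eqref{uwstar}: $|H[U \cup W_0]| \le \binom{|W_0| + 4}{2} = \binom{n-4}{2}$, which is far too large on its own. The point is that this bound is only attained by a full star $S_{n-4}$ on $U \cup W_0$, and one must argue that if $H[U \cup W_0]$ is close to $S_{n-4}$ while $H$ remains $P$-free and $H_1 = F_1^0 \ne \emptyset$, then either $H \subseteq \co(n)$ (contradicting the hypothesis) or one loses enough edges to get under the target.

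The main obstacle is exactly this last case: ruling out, or accounting for, configurations where $H[U \cup W_0]$ is a large star-like graph. The strategy I would use is to let $y$ be a vertex of $U \cup W_0$ of maximum degree in $H[U \cup W_0]$ and consider the edges not through $y$. Since $H[U \cup W_0]$ is $C$-free, Theorem~\ref{c3} applied to the structure forces most edges through a single center; an edge $h \in F_1^0 = H_1$ has $h \cap U = \{x\}$, $h \cap W_1 \ne \emptyset$, and its pair $h \cap W_1$ is nonseparable in $H[W]$ — but $|H[W]| \le 1$, so this imposes strong constraints linking $x$, the center of the near-star, and the vertices of $W_1$. Tracing through these, one shows that if the center of the star in $H[U \cup W_0]$ is a vertex other than $x$, then combined with an $F_1^0$-edge through $x$ and any putative triangle edge, one either builds a copy of $P$ or of $C$, or else the whole of $H$ embeds into $\co(n)$ with center that vertex; and if the center is $x$ itself, then $H \subseteq \co(n)$ with center $x$. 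Either way we reach a contradiction with $H \nsubseteq \co(n)$, except for sub-extremal configurations which lose at least a constant number of edges from $\binom{n-4}{2}$ while $H_1$ contributes at most $3$ and $H[W]$ at most $1$; a short computation then gives $|H| < \binom{n-6}{2} + 10$. I would also treat $n = 13$ separately where needed, since there the gap between $\binom{n-4}{2}$ and $\binom{n-6}{2}+10$ is smallest and the star-exclusion argument must be run most carefully.
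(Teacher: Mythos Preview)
Your treatment of the case $F_1^2\neq\emptyset$ is essentially the paper's argument (note the slip $|U\cup W_0|=5+(n-8)=n-3$, not $n-4$; your bound $\binom{n-4}{2}$ from \eqref{uwstar} is nevertheless correct because that formula reads $\binom{|W_0|+4}{2}$).

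The genuine gap is in the case $F_1^2=\emptyset$. Your plan is right up to the point where you conclude that if $H[U\cup W_0]$ were a sub-star then its center must be $x$ (indeed, both edges of $Q$ lie in $H[U\cup W_0]$ and meet only in $x$), and hence $H\subseteq\co(n)$, contradicting the hypothesis. But your handling of the complementary ``sub-extremal'' situation is wrong: you assert that losing a \emph{constant} number of edges from $\binom{n-4}{2}$ suffices, and that a short computation finishes. It does not. Since $|H|\le|H[U\cup W_0]|+4$, you need $|H[U\cup W_0]|<\binom{n-6}{2}+6$, i.e.\ a deficit of $\binom{n-4}{2}-\binom{n-6}{2}-6=2n-17$ from the star bound, which is linear in $n$, not constant.

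The paper closes this gap cleanly: once $H[U\cup W_0]\nsubseteq S_{n-3}$ is established, it invokes Theorem~\ref{ex1} and Theorem~\ref{ex2} to bound $|H[U\cup W_0]|\le \ex^{(2)}(n-3;P)$, which equals $20+\binom{n-9}{3}$ for $13\le n\le 15$ and $4+\binom{n-7}{2}$ for $n\ge 16$; each of these, plus $4$, is strictly below $\binom{n-6}{2}+10$. Replacing your ``constant loss'' claim by this application of the second-order Tur\'an number is exactly what is needed.
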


\begin{proof}
	We have $|H[W]|=1$, $|U\cup W_0|=n-3$ and by (\ref{r4}), $|H_1|\le 3$. Therefore, by (\ref{HH}),
	$$
	|H|= |H[U\cup W_0]| + |H_1| + |H[W]| \le |H[U\cup W_0]|+ 3+ 1 = |H[U\cup W_0]|+4.
	$$
	Consequently all we need to do is to bound the number of edges in $H[U\cup W_0]$.
	Since $H\nsubseteq \co(n)$, either $F_1^2\neq\emptyset$ or $H[U\cup W_0]\nsubseteq S_{n-3}$. In the former case we use Fact \ref{UW0} to get $ |H[U\cup W_0]| \le {n-6\choose 2} + 1$ and therefore
	$$
	|H|\le {n-6\choose 2} + 1 + 4 =  {n-6\choose 2} + 5< {n-6\choose 2} + 10.
	$$
	Otherwise $H[U\cup W_0]\nsubseteq S_{n-3}$, so by Theorem \ref{ex1}, $|H[U\cup W_0]| \le ex^{(2)}(n-3;P)$. Consequently, by Theorem \ref{ex2}, for $13\le n\le 15$, $|H[U\cup W_0]| \le 20 + {n-3 -6\choose 3}$ and therefore,
	$$
	|H|\le 20 + {n-9\choose 3} + 4 = {n-9\choose 3} + 24 <  {n-6\choose 2} + 10,
	$$
	Whereas for $n\ge 16$ we get $|H[U\cup W_0]| \le 4+ {n-3-4\choose 2}$, and hence
	$$
	|H|\le {n-7\choose 2} + 4 + 4 =  {n-7\choose 2} + 8< {n-6\choose 2} + 10.
	$$
\end{proof}

\bigskip

\begin{fact}\label{W14}
	For $n\ge 13$, if $H_1\neq\emptyset$, $H\nsubseteq \ro(n)$ and $|W_1|=4$ then  $|H|< 10 +{n-6\choose 2}$
\end{fact}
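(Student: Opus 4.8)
The plan is to follow the pattern already used for Facts \ref{w0empty} and \ref{W13}: first record the size inequality forced by $|W_1|=4$, then split according to whether $F^2_1=\emptyset$, and in the remaining branch split once more according to whether $H[U\cup W_0]$ sits inside the full star $S_{n-4}$. First I would note that $|W|=n-5$ gives $|W_0|=n-9\ge4$, so $W_0\neq\emptyset$ and $|U\cup W_0|=n-4$; that $H[W]=H[W_1]$ lives on four vertices, so $2\le|H[W]|\le4$ (two edges are already needed to cover $W_1$); and that $(\ref{r4})$ gives $|H_1|\le2|W_1|-3=5$. Hence, by $(\ref{HH})$, $|H|=|H[U\cup W_0]|+|H_1|+|H[W]|\le|H[U\cup W_0]|+9$.

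In the case $F^2_1\neq\emptyset$ I would invoke Fact \ref{UW0}. For $n=13$ this gives $|H[U\cup W_0]|\le3\cdot4+7=19$, hence $|H|\le28<31=\binom72+10$; for $n\ge14$ we have $|W_0|=n-9\ge5$, so it gives $|H[U\cup W_0]|\le\binom{n-7}2+1$, hence $|H|\le\binom{n-7}2+10<\binom{n-6}2+10$, as wanted.

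In the case $F^2_1=\emptyset$ I would use $(\ref{r6})$ to get $H_1=F^0_1$, which is nonempty since $H_1\neq\emptyset$. As each edge of $F^0_1$ meets $W_1$ in a nonseparable pair of $H[W_1]$, and a $3$-graph on four vertices with at least three edges (namely $K_4$ or $K_4$ minus an edge) has no nonseparable pair, I would conclude that $H[W_1]$ consists of exactly two edges $\{a,b,c\},\{a,b,d\}$ sharing their unique nonseparable pair $\{a,b\}$, where $W_1=\{a,b,c,d\}$; hence $F^0_1=\{\{x,a,b\}\}$, $|H_1|=1$, $|H[W]|=2$, and $|H|=|H[U\cup W_0]|+3$. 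If $H[U\cup W_0]\nsubseteq S_{n-4}$, then $H[U\cup W_0]$ is $P$-free and, since $n\ge13$ gives $\Ex^{(1)}(n-4;P)=\{S_{n-4}\}$, Theorem \ref{ex1} yields $|H[U\cup W_0]|\le\ex^{(2)}(n-4;P)$; substituting the three relevant values of $\ex^{(2)}(n-4;P)$ from Theorem \ref{ex2} (for $13\le n\le16$, for $n=17$, and for $n\ge18$) into $|H|\le\ex^{(2)}(n-4;P)+3$ gives $|H|<\binom{n-6}2+10$ in every case. If instead $H[U\cup W_0]\subseteq S_{n-4}$, then this star is centered at $x$, because the two edges of $Q\subseteq H[U\cup W_0]$ meet only in $x$; I would then check that every edge of $H$ through $x$ lies in $H[U\cup W_0]\cup F^0_1$ and every edge of $H$ missing $x$ lies in $H[W]$, using that no edge of $H$ joins $x$ to a vertex of $(U\cup W_0)\setminus\{x\}$ and a vertex of $W_1$ (such an edge would lie in $F^1_1=\emptyset$ if its middle vertex is in $U$, and otherwise would complete a copy of $P$ together with an edge of $Q$ and an edge of $H[W_1]$). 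Reading off this description shows that $H$ is a substar of $S_{n-4}$ centered at $x$ together with the three edges $\{x,a,b\},\{a,b,c\},\{a,b,d\}$ on four extra vertices, i.e. $H\subseteq\ro(n)$, contradicting the hypothesis.

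The routine part is the binomial-coefficient bookkeeping in the two counting branches. The step I expect to require the most care is the last one: verifying that the rigid $3$-graph surviving the subcase ``$F^2_1=\emptyset$ and $H[U\cup W_0]\subseteq S_{n-4}$'' is genuinely a sub-$3$-graph of $\ro(n)$. This is the exact analogue of the $\co(n)$ step in the proof of Fact \ref{W13}, and it is the only point at which the hypothesis $H\nsubseteq\ro(n)$ is used.
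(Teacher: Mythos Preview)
Your proof is correct and follows essentially the same strategy as the paper's own proof: bound $|H|\le|H[U\cup W_0]|+9$ via \eqref{HH} and \eqref{r4}, then split according to whether $F_1^2\neq\emptyset$ (handled by Fact~\ref{UW0}) or $H[U\cup W_0]\nsubseteq S_{n-4}$ (handled by the $\ex^{(2)}$ bound from Theorem~\ref{ex2}). The only difference is organizational: the paper asserts the dichotomy ``$H\nsubseteq\ro(n)$ implies $F_1^2\neq\emptyset$ or $H[U\cup W_0]\nsubseteq S_{n-4}$'' up front without justification, whereas you supply that justification explicitly by analyzing the $F_1^2=\emptyset$ case (pinning down $|H[W]|=2$, $|H_1|=1$, and then showing that if also $H[U\cup W_0]\subseteq S_{n-4}$ then $H\subseteq\ro(n)$). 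Your sharper intermediate bound $|H|\le|H[U\cup W_0]|+3$ in that branch is not needed---the paper's $+9$ already suffices---but it does no harm.
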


\begin{proof}
	The proof goes along the lines of the previous one. We have $|H[W]|\le {4\choose 3}=4$, $|U\cup W_0|=n-4$ and by (\ref{r4}), $|H_1|\le 5$.  Therefore, by (\ref{HH}),
	$$
	|H|= |H[U\cup W_0]| + |H_1| + |H[W]| \le |H[U\cup W_0]|+ 5+ 4 = |H[U\cup W_0]|+9.
	$$
	Consequently to finish the proof we need to bound $|H[U\cup W_0]|$. Since $H\nsubseteq \ro(n)$, either $F_1^2\neq\emptyset$ or $H[U\cup W_0]\nsubseteq S_{n-4}$. In the former case we use Fact \ref{UW0} to get for $n=13$, $|H[U\cup W_0]|\le 19$ and consequently
	$$
	|H|\le 19+9=28<31=10 + {13-6\choose 2}.
	$$
	Whereas for $n\ge 14$, $|H[U\cup W_0]|\le {n-7\choose 2} + 1$ and hence,
	$$
	|H| \le {n-7\choose 2} + 1 + 9 =  {n-7\choose 2} + 10< {n-6\choose 2} + 10.
	$$
	Otherwise $H[U\cup W_0]\nsubseteq S_{n-3}$ so we use Theorem \ref{ex1} to get $|H[U\cup W_0]|\le \ex^{(2)}(n-4;P)$. Consequently, by Theorem \ref{ex2}, for $13\le n\le 16$, $H[U\cup W_0]|\le 20 + {n-4-6\choose 3}$ and hence
	$$
	|H|\le 20 + {n-10\choose 3} + 9 = {n-10\choose 3} + 29<  {n-6\choose 2} + 10.
	$$
	Whereas for $n\ge 17$ we have $ |H[U\cup W_0]|\le 4+ {n-4-4\choose 2}$ and therefore,
	$$
	|H| \le 4+{n-8\choose 2}  + 9 =  {n-8\choose 2} + 13< {n-6\choose 2} + 10.
	$$

\end{proof}

\bigskip

\begin{fact}\label{n12}
	If $n=12$, $H_1\neq \emptyset$ and $H\neq \co(12)$ then  $|H|< 10 +{12-6\choose 2}=25$.
\end{fact}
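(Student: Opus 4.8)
The plan is to imitate the proofs of Facts \ref{w0empty}, \ref{W13} and \ref{W14}, now carried out for $n=12$ and split according to the value of $|W_1|$. Since $|W|=n-5=7$ and $|W_1|\ge3$, we have $|W_1|\in\{3,4,5,6,7\}$, equivalently $|W_0|=7-|W_1|\in\{4,3,2,1,0\}$. In each case I would begin from the decomposition $H=H[U\cup W_0]\cup H_1\cup H[W]$ of (\ref{HH}) (or $H=H[U]\cup H[W]\cup H_0\cup H_1$ of (\ref{HHH}) when $W_0=\emptyset$), bound $|H_1|$ by (\ref{r4}), (\ref{r1}), (\ref{r5}), (\ref{r7}) as appropriate, bound $|H[U]|$ by (\ref{hu1}), bound $|H[U\cup W_0]|$ by Fact \ref{UW0} when $F^2_1\neq\emptyset$, and otherwise bound $|H[W]|=|H[W_1]|$ and $|H[U\cup W_0]|$ through Theorems \ref{c3}, \ref{ex1}, \ref{ex2} and \ref{ex3}. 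Throughout I would exploit the standing hypothesis $H\nsubseteq\co(12)$ (and, automatically from $M\subseteq H$ together with the connectivity of $H$, also $H\nsubseteq S_{12}$, $H\nsubseteq K_6\cup K_6$, $H\nsubseteq G_i(12)$).

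For $|W_1|\in\{3,4\}$ the computations of Facts \ref{W13} and \ref{W14} apply verbatim and give $|H|\le|H[U\cup W_0]|+4$, resp.\ $|H|\le|H[U\cup W_0]|+9$. When $F^2_1\neq\emptyset$, Fact \ref{UW0} with $|W_0|\in\{4,3\}$ bounds $|H[U\cup W_0]|$ by $19$, resp.\ $16$; when $F^2_1=\emptyset$ and $H[U\cup W_0]$ is a star, the argument of Fact \ref{W13} forces $H\subseteq\co(12)$, contradicting $H\nsubseteq\co(12)$. The only genuinely new point at $n=12$ is the case $F^2_1=\emptyset$, $H[U\cup W_0]$ not a star, where the off-the-shelf bounds $\ex^{(2)}(9;P)=21$ and $\ex^{(2)}(8;P)=20$ only give $|H|\le25$. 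Here I would use that $H[U\cup W_0]$ is $\{P,C\}$-free, together with the fact that every extremal 3-graph for $\ex^{(1)}(m;P)$, $\ex^{(2)}(m;P)$, $\ex^{(3)}(m;P)$ with $m\le9$ other than the star $S_m$ contains a copy of $C$ (namely $K_m$ or $K_6\cup K_1$ for $m\le7$, $K_6\cup K_{m-6}$ for $m=8,9$, and $G_1(m),G_2(m)$). Since, moreover, a $C$-free subgraph of $K_6\cup K_{m-6}$ has at most $\binom52+1$ edges, it follows that a $\{P,C\}$-free 3-graph on $m\le9$ vertices that is not a sub-3-graph of $S_m$ has strictly fewer than $\ex^{(3)}(m;P)$ edges; feeding this back pushes $|H[U\cup W_0]|$ low enough that $|H|<25$.

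For $|W_1|=7$ (so $W_0=\emptyset$) I would parallel Fact \ref{w0empty}, and for $|W_1|\in\{5,6\}$ argue in the same spirit. The essential extra input is a sharp bound on $|H[W_1]|$. Since $H[W_1]$ is $\{P,C\}$-free on $|W_1|\le7$ vertices, either $H[W_1]\subseteq S_{|W_1|}$, in which case $P$-freeness forces $|F^2|\le2$ and, if $F^0_1\neq\emptyset$, the nonseparable-pair bound cuts $|H[W_1]|$ down exactly as in Fact \ref{w0empty}; or $H[W_1]\nsubseteq S_{|W_1|}$, and then the same $\{P,C\}$-freeness-plus-hierarchy argument as above applies on $\le7$ vertices — here the only single-edge deletion making $G_1(7)$ $C$-free yields a star, while $G_2(7)$ needs two deletions — giving $|H[W_1]|\le11<12$ when $|W_1|=7$, which makes the bound $|H|\le(2|W_1|-1)+|H[W_1]|$ coming from (\ref{r7}) strictly below $25$. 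For $|W_1|\in\{5,6\}$ the analogous combination of (\ref{r1}), (\ref{r5}) and Fact \ref{UW0} on the $U$-side with the $|H[W_1]|$ bound should again yield $|H|<25$, the star subcases of $H[U\cup W_0]$ once more collapsing to $H\subseteq\co(12)$, once one records that $F^2_1\neq\emptyset$ forces a vertex of $W_1$ that cannot be a path-endpoint of $H[W_1]$ and hence restricts $|H[W_1]|$ further.

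The step I expect to be the actual obstacle is not any single estimate but the bookkeeping that turns the many boundary subcases into strict inequalities: repeatedly, the generic bounds land exactly at $25$, and squeezing out the extra unit requires pinning down the precise extremal 3-graph, noticing it contains $C$ or is a star, and then arguing — through $|H[U]|\le6$, $Q\subseteq H[U]$ with $x\in V(Q)$ lying in two edges of $H[U]$, connectivity of $H$, and $H\nsubseteq\co(12)$ — that it cannot sit inside a connected $\{P,C\}$-free $H$ of the shape under consideration. Carrying this out uniformly over all five values of $|W_1|$ is where the real work lies.
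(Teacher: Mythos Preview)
Your overall plan---splitting on $|W_1|\in\{3,\dots,7\}$ and combining Fact~\ref{UW0}, (\ref{r4}), (\ref{r5}), (\ref{r7}) with the Tur\'an hierarchy---matches the paper's, but two of your reductions fail. The more serious one is the claim that for $|W_1|\in\{5,6\}$ ``the star subcases of $H[U\cup W_0]$ once more collaps[e] to $H\subseteq\co(12)$''. This is false: $\co(12)$ has exactly one edge missing its center, so $H\subseteq\co(12)$ would force $|H[W_1]|\le 1$, yet every vertex of $W_1$ has positive degree in $H[W_1]$, whence $|H[W_1]|\ge 2$ once $|W_1|\ge 4$. The inclusion into $\co(n)$ is special to $|W_1|=3$, and the analogous inclusion into $\ro(n)$ is special to $|W_1|=4$; there is no such containing structure for $|W_1|\in\{5,6\}$. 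The paper does not argue via a forbidden supergraph here at all: when $F_1^2=\emptyset$ it uses $H_1=F_1^0\neq\emptyset$ and the nonseparable-pair property directly, which simultaneously caps $|H[W_1]|$ (at $4$ for $|W_1|=5$, at $8$ for $|W_1|=6$) and $|F_1^0|$ (at $2$, resp.\ $6$), and then (\ref{uwstar}) finishes. In fact even for $|W_1|=4$ the paper abandons the Fact~\ref{W14} template and uses this nonseparable-pair route in place of any appeal to $\ro(12)$.

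The second gap is quantitative: for $|W_1|=4$ with $F_1^2\neq\emptyset$ your bound is $|H[U\cup W_0]|+|H_1|+|H[W]|\le 16+5+4=25$, which is not strict. The paper repairs this by a further split on $F_1^0$: if $F_1^0\neq\emptyset$ the nonseparable-pair argument forces $|H[W]|=2$, giving $16+5+2=23$; if $F_1^0=\emptyset$ then $|H_1|=|F_1^2|\le 2|W_1|-4=4$ by (\ref{r5}), giving $16+4+4=24$. A parallel extra split (there on whether $P_2\subseteq H[W]$) is also what makes $|W_1|\in\{5,6\}$ with $F_1^2\neq\emptyset$ go through, since the trivial bound $|H[W_1]|\le\binom{|W_1|}{3}$ is far too weak. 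Your final paragraph correctly anticipates that the work lies in squeezing strict inequalities out of the boundary subcases, but the specific device you propose for the $F_1^2=\emptyset$ branch is the wrong one.
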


\begin{proof} Let us split the proof into five parts according to the size of the set $W_1$. We start with
	$\mathbf{|W_1|=3}$. Then $|W_0|=4$, $|U\cup W_0|=9$, $|H[W]|=1$ and by (\ref{r4}), $|H_1|\le 3$. Consequently, by (\ref{HH}),
	$$
	|H|= |H[U\cup W_0]| + |H_1| + |H[W]| \le |H[U\cup W_0]|  + 3 + 1 = |H[U\cup W_0]| +4.
	$$
Further, as  $H\nsubseteq \co(12)$, either $F_1^2\neq\emptyset$ or $H[U\cup W_0]\nsubseteq S_{n-3}$. In the former case we use Fact \ref{UW0} to get $|H[U\cup W_0]|\le 19$. Otherwise, $H[U\cup W_0]\nsubseteq S_{n-3}$, and since $H[U\cup W_0]\neq K_6\cup K_3$, by Theorems \ref{ex1} and \ref{ex2}, $|H[U\cup W_0]| < 21$. In both cases $|H[U\cup W_0]| \le 20$ and therefore
	$$
		|H|\le |H[U\cup W_0]| + 4 \le 20 +4=24 < 25.
	$$
	
	For $\mathbf{|W_1|=4}$ we have $|W_0|=3$, $|U\cup W_0|=8$ and $|H[W]|\le {4\choose 3}=4$. If $F_1^2= \emptyset$, then $H_1=F_1^0\neq\emptyset$ and as for each $h\in F_1^0$ the pair $h\cap W_1$ is nonseparable, $|H_1|=1$ and $|H[W]|=2$. Consequently, by (\ref{HH}) and (\ref{uwstar}),
	$$
	|H|= |H[U\cup W_0]| + |H_1| + |H[W]| \le {7 \choose 2} + 1 + 2 = 24 < 25.
	$$
			Otherwise $F_1^2\neq\emptyset$ and we can use Fact \ref{UW0} to get $|H[U\cup W_0]|\le 16$. For $F_1^0\neq\emptyset$, $|H[W]|=2$ and consequently, by (\ref{HH}) and (\ref{r4}),
			$$
			|H|= |H[U\cup W_0] + |H_1| + |H[W]| \le 16 + 5 + 2 = 23 < 25.
			$$
			Whereas for $F_1^0=\emptyset$ we use (\ref{HH}), (\ref{r6}) and  (\ref{r5}) to get
			$$
			|H|= |H[U\cup W_0] + |H_1| + |H[W]| \le 16 + 4 + 4 = 24 < 25.
			$$

	Now let $\mathbf{|W_1|=5}$, $|W_0|=2$, $|U\cup W_0|=7$ and $|H[W]|\le {5\choose 3}=10$. For $F_1^2\neq \emptyset$, by Fact \ref{UW0} we get $|H[U\cup W_0]|\le 13$ and moreover $|H[W]|\le 6$, because otherwise we wouldn't be able to avoid a path $P$ in $H$. If additionally  $P_2\subseteq H[W]$ then again by $P\nsubseteq H$, $|H_1|=|F_1^0|+|F_1^2|\le 2+2= 4$. Hence, by (\ref{HH})
	$$
	|H|= |H[U\cup W_0]| + |H_1| + |H[W]| \le 13 + 4 +6=23<25.
	$$
	Otherwise $P_2\nsubseteq H[W]$ and consequently one can show, that $|H[W]|\le 4$. Therefore, by (\ref{HH}) and (\ref{r4}),
	$$
	|H|= |H[U\cup W_0]| + |H_1| + |H[W]| \le 13 + 7 +4=24<25.
	$$
	For $F_1^2= \emptyset$ we have $F_1^0\neq \emptyset$. Hence, since for each $h\in F_1^0$ the pair $h\cap W_1$ is nonseparable, $|H[W]|\le 4$ and $|H_1|=|F_1^0|\le 2$. Consequently, by (\ref{HH}) and (\ref{uwstar}),
		$$
		|H|= |H[U\cup W_0]| + |H_1| + |H[W]| \le  {7-1\choose 2} + 2 +4=21<25.
		$$
		
	We move to $\mathbf{|W_1|=6}$. Then $|W_0|=1$, $|U\cup W_0|=6$ and by (\ref{hwstar}), $|H[W]|\le {6-1\choose 2}=10$.  Let us again start with the case $F_1^2\neq \emptyset$. By (\ref{e4}) we get $|H[U\cup W_0]|\le 8$. If $P_2\subseteq H[W]$ then since $H$ is $P$-free, $|H_1|=|F_1^0|+|F_1^2|\le 2+4= 6$. Consequently, by (\ref{HH}),
			$$
			|H|= |H[U\cup W_0]| + |H_1| + |H[W]| \le 8 + 6 +10=24<25.
			$$
	Otherwise $P_2\nsubseteq H[W]$ and therefore one can show that $|H[W]|\le 6$. By (\ref{r4}), $|H_1|\le 9$ and consequently by (\ref{HH}),
				$$
				|H|= |H[U\cup W_0]| + |H_1| + |H[W]| \le 8 + 9  + 6=23<25.
				$$
				For $F_1^2= \emptyset$ we have $F_1^0\neq\emptyset$. Hence since for each $h\in F_1^0$ the pair $h\cap W_1$ is nonseparable, $|H[W]|\le 8$ and by (\ref{r1}), $|H_1|=|F_1^0|\le 6$.
				Therefore, by (\ref{HH}) and (\ref{uwstar}),
				$$
				|H|= |H[U\cup W_0]| + |H_1| + |H[W]| \le 10 +6  + 8=24<25.
				$$
						
	Finally, $\mathbf{|W_1|=7}$, $W_0=\emptyset$ and by (\ref{hwstar}), $|H[W]|\le {7-1\choose 2}=15$. If $H[W]\subseteq S_7$ then as $H$ is $P$-free, by (\ref{r3}), $|F_1^2|=|F_1^2(y)|\le 2$, where $y\in W_1$ is the center of the star $S_7$. If additionally $F_1^0=\emptyset$, then by (\ref{HHH}), (\ref{r6}) and (\ref{hu1}),
	$$
	|H|= |H[U]| + |H_1| + |H[W]| \le 6 +2  + 15=23<25.
	$$
	Otherwise $F_1^0\neq\emptyset$ and hence $|H_1|=|F_1^0|+|F_1^2|\le 3+2=5$ and $|H[W]|\le 7$. Again we use the fact that for each $h\in F_1^0$ the pair $h\cap W_1$ is nonseparable. Therefore by (\ref{HHH}) and (\ref{hu1}),
	$$
	|H|= |H[U]| + |H_1| + |H[W]| \le 6 +5  + 7=18<25.
	$$
	The last case we have to consider is $H[W]\nsubseteq S_7$. If $M\subseteq H[W]$, then by Lemma \ref{PCM}, $|H[W]|\le \ex(7;\{P,C\}|M)=10$. Otherwise by Lemma \ref{2MC}, $|H[W]|\le \ex^{(2)}(7;\{M,C\})=10$. Hence by (\ref{HHH}) and (\ref{r7}),
	$$
	|H|= |H[U]| + |H_1| + |H[W]| \le 13  + 10=23<25.
	$$
\end{proof}

\bigskip

\begin{fact}\label{w05}
	For $n\ge 12$ if $|W_1|\ge 5$ and $H_1\neq\emptyset$ then
	\begin{equation}\label{z5}
	|H|< {n-6\choose 2}+10.
	\end{equation}
\end{fact}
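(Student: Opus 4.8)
The plan is to combine the earlier facts with a structural case analysis. Since Fact~\ref{n12} disposes of $n=12$ and Fact~\ref{w0empty} of $W_0=\emptyset$, I may assume $n\ge 13$ and $w_0:=|W_0|\ge1$; put $w_1:=|W_1|\ge5$, so $n=w_0+w_1+5$ and $n-6=w_0+w_1-1$. The working estimate is the decomposition \eqref{HH}, $|H|=|H[U\cup W_0]|+|H_1|+|H[W]|$, bounded termwise by $|H[U\cup W_0]|\le\binom{w_0+4}{2}$ (Theorem~\ref{c3}, as $H$ is $C$-free), by $|H_1|\le 2w_1-3$ (via \eqref{r4}), and by $|H[W]|\le\binom{w_1-1}{2}$ for $w_1\ge6$ (Theorem~\ref{c3}; for $w_1=5$ simply $|H[W]|\le\binom{5}{3}=10$). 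Summing these three bounds settles the cases in which $w_0$ and $w_1$ are not too small; the remaining cases need one of the three summands controlled more sharply, and the tool for that is that $H$ is $P$-free.

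The heart of the argument is a trichotomy for $H[W]$ (which is $\{P,C\}$-free and has no isolated vertex). First, if $H[W]\nsubseteq S_{w_1}$ and $M\nsubseteq H[W]$, then, since $S_{w_1}$ is the unique $1$-extremal $3$-graph for $\{M,C\}$ (for $w_1\ge7$; for $w_1\in\{5,6\}$ one argues directly), Lemma~\ref{2MC} gives $|H[W]|\le\ex^{(2)}(w_1;\{M,C\})=\max\{10,w_1\}$, far below $\binom{w_1-1}{2}$, and the termwise bounds close the case. Second, if $M\subseteq H[W]$ then by Lemma~\ref{PCM} $|H[W]|\le\ex(w_1;\{P,C\}|M)$; when this is large, so $H[W]$ is essentially the comet $\co(w_1)$ with centre $c$, one shows that $H$ being $P$-free forces a copy of $P$ with edges $\{p,q,h\}$, $\{c,h,h'\}$, $\{c,\ell,\ell'\}$ out of any $F^2_1$-edge $\{p,q,h\}$ hitting a head vertex or leaf, and that no pair of $W_1$ is nonseparable, whence $F^0_1=\emptyset$ and $|F^2_1|\le2$, so $|H_1|\le2$ and the bound follows comfortably. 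Third, if $H[W]\subseteq S_{w_1}$ with centre $y$, then every edge of $H[W]$ contains $y$; when $H[W]$ is the full star no pair of $W_1\setminus\{y\}$ is nonseparable, so $F^0_1=\emptyset$, and any $F^2_1$-edge $\{p,q,z\}$ with $z\ne y$ would, together with star-edges $\{z,y,u\}$ and $\{y,a,b\}$, yield a copy of $P$, so $|F^2_1|\le2$ and $|H_1|\le2$; for proper substars one uses the further constraint that an $F^0_1$-edge $\{x,a,b\}$ with $a$ a neighbour of $y$, an edge $\{a,y,c\}$, and an $F^2_1$-edge spell a copy of $P$, which keeps $|F^0_1|$ small. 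Throughout, whenever $F^2_1\ne\emptyset$ I bound $|H[U\cup W_0]|$ by Fact~\ref{UW0} instead of by $\binom{w_0+4}{2}$, and I invoke \eqref{r5} and \eqref{r7} where needed.

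The main obstacle is precisely the regime where $H[W]$ is a \emph{near-maximal} star $S_{w_1}$ (or, symmetrically, a near-maximal comet): there the three termwise bounds do not add up below $\binom{n-6}{2}+10$, and one genuinely needs the $P$-forcing (common-neighbour) arguments above to trade the size of $H_1$ against the size of $H[W]$. A secondary difficulty is checking the smallest parameters by hand, in particular $n=13$ with $w_1=5$ (so $w_0=3$) and, more generally, the boundary values $w_1\in\{5,6\}$, where the relevant inequality, of the shape $(5-w_1)(2w_0-1)<0$, degenerates and a direct count in the spirit of Fact~\ref{n12} is required. Once these are handled, the proof of Fact~\ref{w05} is complete.
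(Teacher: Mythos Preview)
Your approach is a direct termwise estimate plus a structural trichotomy on $H[W]$, and you correctly identify that the naive sum $\binom{w_0+4}{2}+(2w_1-3)+\binom{w_1-1}{2}$ is not always below $\binom{w_0+w_1-1}{2}+10$, so that the ``near-maximal star'' regime and the boundary values $w_1\in\{5,6\}$ require separate, delicate arguments. Those arguments are only sketched: you assert that in the comet/star cases one can force $|H_1|\le 2$ via $P$-arguments, and that the small cases go through ``in the spirit of Fact~\ref{n12}'', but you do not actually carry this out, and the bookkeeping (e.g.\ when $H[W]$ is a proper substar with many leaves missing, where nonseparable pairs do exist) is nontrivial.

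The paper sidesteps all of this with a one-line induction on $n$. The base case $n=12$ is Fact~\ref{n12}; for $n\ge 13$ with $W_0=\emptyset$ one quotes Fact~\ref{w0empty}; otherwise one picks any $v\in W_0$, observes that $|W_0|\le n-10$ (since $|W_1|\ge 5$), applies the degree bound \eqref{hv} to get $|H(v)|\le 4+\max\{2,|W_0|-1\}\le n-7$, and applies the induction hypothesis to $H-v$ (whose $W_1$ and $H_1$ are unchanged), yielding
\[
|H|=|H(v)|+|H-v|< (n-7)+\binom{n-7}{2}+10=\binom{n-6}{2}+10.
\]
The key idea you are missing is precisely this: rather than fighting the trade-off between $|H[U\cup W_0]|$ and $|H[W]|$ globally, peel off $W_0$ one vertex at a time using \eqref{hv}, reducing to the already-proved $W_0=\emptyset$ case. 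This avoids your trichotomy entirely.
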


\begin{proof}
	The proof is by induction on $n$ with the initial step $n=12$ done in Fact \ref{n12}. Let $n\ge 13$. For $W_0=\emptyset$ the inequality (\ref{z5}) results from Fact \ref{w0empty}. Otherwise there exist a vertex $v\in W_0$. Notice, that since $|W_1|\ge 5$ we have $|W_0|\le n-10$ and
consequently, by (\ref{hv}), $|H(v)|\le 4+\max\{2,|W_0|-1\}\le 4 + n-11=n-7$. Finally, by the induction assumption we get $|H-v|< {n-7\choose 2}+10$.
	Therefore,
	$$
	|H|=|H(v)|+|H-v|< n-7+{n-7\choose 2}+10={n-6\choose 2}+10.
	$$
\end{proof}

\end{document}